\theoremstyle{plain}
\newtheorem{thm}{Theorem}[section]
\newtheorem{cor}[thm]{Corollary}
\newtheorem{lem}[thm]{Lemma}
\newtheorem{prop}[thm]{Proposition}
\theoremstyle{definition}
\newtheorem{dfn}[thm]{Definition}
\theoremstyle{remark}
\newtheorem{rem}[thm]{Remark}
\numberwithin{equation}{section}
\newcommand{\pd}[2]
{\dfrac{\partial #1}{\partial #2}}
\newcommand{\GL}{\operatorname{GL}}
\newcommand{\fV}{{\mathfrak V}}
\newcommand{\fu}{{\mathfrak u}}
\newcommand{\fy}{{\mathfrak y}}
\newcommand{\RR}{{\mathbb R}}
\newcommand{\cA}{{\mathcal A}}
\newcommand{\cE}{{\mathcal E}}
\newcommand{\cH}{{\mathcal H}}
\newcommand{\cS}{{\mathcal S}}
\newcommand{\cL}{{\mathcal L}}
\newcommand{\cM}{{\mathcal M}}
\newcommand{\cK}{{\mathcal K}}
\newcommand{\cF}{{\mathcal F}}
\newcommand{\cW}{{\mathcal W}}
\newcommand{\cX}{{\mathcal X}}
\newcommand{\cY}{{\mathcal Y}}
\newcommand{\C}{{\mathbb C}}
\newcommand{\R}{{\mathbb R}}
\newcommand{\N}{{\mathbb N}}
\newcommand{\Span}{\operatorname{Span}}
\newcommand{\im}{\operatorname{Im}}
\newcommand{\tempdist}{\cS^{\prime}(\R,\cE)}
\newcommand{\tempdistd}{\cS^{\prime}(\R^d,\cE)}
\newcommand{\schwartz}{\cS(\R,\cE)}
\newcommand{\schwartzd}{\cS(\R^d,\cE)}
\newcommand{\Ltwoe}{L^2(\R,\cE)}
\newcommand{\Ltwoex}{L^2(\R,\cE,\alpha(x))}
\newcommand{\Ltwoej}{L^2(\R,\cE,\alpha_j)}
\newcommand{\Pos}{\operatorname{Pos}}
\newcommand{\vr}{very reasonable}
\begin{document}

\title{Dilations of Semigroups of Contractions through Vessels}
\author{Eli Shamovich}
\address{Department of  Mathematics\\ 
Technion - Israel Institute of Mathematics\\
Haifa, 3200003, Israel}
\email{shamovich@tx.technion.ac.il}

\author{Victor Vinnikov}
\address{Department of  Mathematics\\ 
Ben-Gurion University of the Negev\\
84105 Beer-Sheva, Israel}
\email{vinnikov@math.bgu.ac.il}

\thanks{This paper is partially based on the results appearing in the Ph. D. thesis of E. S. written under the supervision of V. V. in the Ben-Gurion university of the Negev. Both authors were partially supported by
US--Israel  BSF grant 2010432.}
\begin{abstract}
Let $A_1,\ldots,A_d$ be a $d$-tuple of commuting dissipative operators on a separable Hilbert space $\cH$. Using the theory of operator vessels and their associated systems, we give a construction of a dilation of the multi-parameter semigroup of contractions on $\cH$ given by $e^{i \sum_{j=1}^d t_j A_j}$.
\end{abstract}
\maketitle
\tableofcontents

\section{Introduction} \label{sec:intro}

The question of dilating a contraction to a unitary was considered first in 1953 by B. Sz.-Nagy in \cite{SNa53} (P. Halmos constructed a dilation in \cite{Hal50}, but it was not a power dilation); the first explicit construction of a unitary dilation was given by J. Sch\"{a}ffer in \cite{Sch55}. In \cite{SNa53} Sz.-Nagy has also constructed dilations of one-parameter semigroups of contractions. Nagy's dilation theorem was extended to pairs of commuting contractions by T. Ando in \cite{An63}. However, in 1970 S. Parrot provided a counterexample to the existence of commuting unitary dilations for three commuting contractions in \cite{Par70}. More examples appeared later, see for example \cite{Var74}. In \cite{Arv69} and \cite{Arv72} W. Arveson generalized dilation theory to the setting of arbitrary operator algebras and their representations. More historical background and details are available in \cite{Arv10} and \cite{NFBK10}.

Our goal in this paper is to give an explicit construction of commutative unitary dilations of certain multi-parameter commutative semigroups of contractions. To be more precise, given a $d$-tuple of commuting dissipative operators on a separable Hilbert space $\cH$, we consider the multiparameter semigroup $S$ of contractions that they generate; we provide conditions on $A_1,\ldots,A_d$, such that $S$ admits a dilation to a commutative group of unitaries. 

As was already implicit in the works of M. S. Livsic (see \cite{BrLiv60}) and P. Lax and R. Phillips (\cite{LaxPhl89}) and became explicit later (\cite{AdAr70}, \cite{BallCoh91}, \cite{Hel74} and \cite{CBMS87}), the construction of a unitary dilation has a simple system-theoretic interpretation: we embed the contraction into a conservative discrete-time input/state/output (i/s/o) linear system and consider the Hilbert space of square-summable trajectories with the natural shift operator. See also Sarason's Lemma \cite[Lem.\ 0]{Sar65} that shows that any unitary dilation is obtained in this way, the works of B. Pavlov \cite{Pav77} and \cite{Pav99}, and \cite{BSV04} for a survey of various mutlidimensional cases. We review these ideas in Section \ref{sec:dilations} (see Lemma \ref{lem:one_dim_ext} and Proposition \ref{prop:one_dim_dilation}). Motivated by this we construct an overdetermined multidimensional conservative (continuous-time) i/s/o system and consider a Hilbert space of certain trajectories of this system with a natural unitary representation of $\R^d$ on it by shifts. We expect that any commutative unitary dilation of a commutative semigroup of contractions arises in this way, so that the sufficient conditions on the operators $A_1,\ldots,A_d$, that we describe, are necessary; we plan to address this question in a future work. For other sufficient conditions for the existence of a commutative unitary dilation in the discrete-time case, see \cite{An76}, \cite{Arch07}, \cite{Breh61}, \cite{GaSu97}, \cite{GaSu01} and \cite{GKVVW09}.

In Section \ref{sec:vessels} we introduce the main tool in the construction, namely commutative operator vessels and their associated systems. More concretely, if $A_1,\ldots,A_d$ is a $d$-tuple of commuting bounded operators on a separable Hilbert  space $\cH$, we associate to them a collection of spaces and operators:
\[
\fV = \left(\cH,\cE,\Phi,\{A_j\}_{j=1}^d,\{\sigma_j\}_{j=1}^d,\{\gamma_{jk}\}_{j,k=1}^d,\{\gamma_{*jk}\}_{j,k=1}^d \right)
\]
Here $\cE$ is an auxiliary separable Hilbert space, $\Phi \colon \cH \to \cE$ is a bounded operator and for every $j,k = 1,\ldots,d$, $\sigma_j$, $\gamma_{jk}$ and $\gamma_{*jk}$ are selfadjoint bounded operators on $\cE$ satisfying some conditions described in detail in the text (see \eqref{eq:colligation_condition} and \eqref{eq:vessel_conditions}). The study of operator vessels was initiated by M. S. Livsic (see for example the papers \cite{BrLiv60}, \cite{LivVak74}, \cite{Liv78} and \cite{Liv79wav} and the book \cite{LKMV}). A functional model for two commuting dissipative operators with finite-dimensional imaginary parts was constructed by J. Ball and the second author in \cite{BV} using frequency domain methods. We briefly recall the relevant notions of the associated overdetermined multidimensional system, the adjoint system and the input and output compatibility conditions in both the continuous and the discrete-time setting. We introduce all the necessary background, notions and results. Some of the results are proved for the sake of completeness. In particular we show that there is a natural way, given a $d$-tuple of commuting operators, to embed them in a so-called strict vessel.

In Section \ref{sec:discrete_solve} we consider the system of input and output compatibility conditions in the analytic case. If $d \geq 3$ this system is itself overdetermined. We find therefore necessary and sufficient conditions on the vessel, so that the system of input (or output) compatibility conditions admits a solution for any initial condition along one of the axes analytic in some neighborhood of the origin. We call these conditions very reasonable conditions or $VR$ for short. We then show in Section \ref{sec:vr} that the $VR$ conditions are independent of the choice of the axis and they hold at the input if and only if they hold at the output. We also show that in the case of a doubly commuting $d$-tuple of operators, we have the $VR$ conditions automatically for the strict vessel embedding.

We proceed in Section \ref{sec:cont_solve} to show that if the $VR$ conditions hold and the system of continuous-time compatibility conditions is hyperbolic, then it has a weak solution in tempered distributions for every initial condition along one of the axes. We proceed to show that under certain assumptions the distribution is in fact a function that is in $L^2$ on lines with respect to a twisted inner product. We write a transform taking the initial condition along the $t_1$ axis into a condition along the $t_j$ axis and demonstrate some of its properties. This section forms the technical toolbox for the proof of the main dilation theorem.

In Section \ref{sec:main} we state the main dilation theorem: if a $d$-tuple of commuting dissipative operators $A_1,\ldots,A_d$ possesses the dissipative embedding property, namely if they can be embedded into a vessel satisfying the $VR$ conditions and such that $\sigma_1, \ldots, \sigma_d \geq 0$, then the semigroup of contraction generated by $A_1,\ldots,A_d$ admits a dilation to a commutative group of unitaries. In particular since the $VR$ conditions are vacuous when $d = 2$ we obtain a version of Ando's dilation theorem for the continuous-time case. There are some technical restrictions, since we are dealing with bounded generators, however our construction of the unitary dilation is completely explicit and thus should allow a further geometric analysis, similarly to the one dimensional case. 

We prove the main dilation theorem, i.e., we construct the dilation space and the group of unitaries in Section \ref{sec:dilations} using the tools developed in Section \ref{sec:cont_solve}. We conclude by demonstrating a necessary and a sufficient condition for the dilation thus obtained to be minimal.

\section{Livsic Commutative Operator Vessels} \label{sec:vessels}

In this section we recall briefly the notion of Livsic commutative operator vessels, for more information see \cite{LKMV}.

\begin{dfn}
Let $\cH$ be a separable Hilbert space and $A_1, \ldots, A_d$ a $d$-tuple of commuting non-selfadjoint bounded operators on $\cH$. We fix an auxiliary separable Hilbert space $\cE$, a bounded operator $\Phi \colon \cH \to \cE$ and a $d$-tuple of bounded selfadjoint operators on $\cE$, $\sigma_1,\ldots,\sigma_d$, satisfying the colligation condition, namely for every $k = 1,\ldots,d$:
\begin{equation} \label{eq:colligation_condition}
A_k - A_k^* = i \Phi^* \sigma_k \Phi.
\end{equation}
We also fix two collections of bounded selfadjoint operators, $\gamma_{jk}$ and $\gamma_{*jk}$, for $j,k = 1,\ldots,d$ on $\cE$, satisfying the following set of conditions:
\begin{align} \label{eq:vessel_conditions}
\begin{split}
& \bullet \gamma_{jk} = -\gamma_{kj}, \gamma_{*jk} = -\gamma_{*kj},\\
& \bullet \sigma_j \Phi A_k^* - \sigma_k \Phi A_j^* = \gamma_{jk} \Phi, \\
& \bullet \sigma_j \Phi A_k - \sigma_k \Phi A_j = \gamma_{*jk} \Phi, \\
& \bullet \gamma_{*jk} - \gamma_{jk} = i \left(\sigma_j \Phi \Phi^* \sigma_k - \sigma_k \Phi \Phi^* \sigma_j \right).
\end{split}
\end{align}
The collection of operators and spaces satisfying the above conditions is called a Livsic commutative operator vessel.
\end{dfn}

Given a Livsic commutative vessel one can associate to it an energy preserving linear time invariant overdetermined system in continuous-time. Let $u,y \colon \R^d \to \cE$ be smooth functions, we call them the input and output signals, respectively, and let $x \colon \R^d \to \cH$ be a smooth function that we call the state, then we define the system:
\begin{align*}
\begin{split}
& i \pd{x}{t_k} + A_k x = \Phi^* \sigma_k u, \\
& y = u - i \Phi x.
\end{split}
\end{align*}
We will assume for now that $u,y \in C^1(\R^d,\cE)$ and $x \in C^2(\R^d,\cH)$. We will discuss latter on various relaxations of this assumption.

For $d > 1$ the above system is overdetermined and hence requires input and output compatibility conditions. It follows from the vessel conditions \eqref{eq:vessel_conditions} (for details see \cite[Thm.\ 3.2.1]{LKMV}) that the necessary and sufficient input compatibility conditions are given by:
\begin{equation} \label{eq:ns_input_compat}
\Phi^*\left(\sigma_k \pd{u}{t_j} - \sigma_j \pd{u}{t_k} + i \gamma_{jk} u\right) = 0.
\end{equation}
We define the strict input compatibility conditions by:
\begin{equation} \label{eq:input_compat}
\sigma_k \pd{u}{t_j} - \sigma_j \pd{u}{t_k} + i \gamma_{jk} u = 0.
\end{equation}
Similarly at the output we get the following system of compatibility conditions and strict compatibility conditions:
\begin{equation} \label{eq:ns_output_compat}
\Phi^*\left(\sigma_k \pd{y}{t_j} - \sigma_j \pd{y}{t_k} + i \gamma_{*jk} y\right) = 0.
\end{equation}
\begin{equation} \label{eq:output_compat}
\sigma_k \pd{y}{t_j} - \sigma_j \pd{y}{t_k} + i \gamma_{*jk} y = 0.
\end{equation}
When $d > 2$ we note that the system of input compatibility conditions \eqref{eq:input_compat} is itself overdetermined. The goal of the current paper is to understand the additional compatibility conditions on \eqref{eq:input_compat} required for the system to have "enough" solutions in the hyperbolic case, i.e., when the operators $A_1,\ldots,A_d$ are dissipative and $\sigma_1,\ldots,\sigma_d \geq 0$. We then use these solutions to construct a unitary dilation for the semigroup of contractions generated by $A_1,\ldots,A_d$.

One can show that if $u$ solves the system of input compatibility conditions, then for each initial condition $x(0) = h \in \cH$, there exists a unique state $x$ solving the system and the output $y$ then satisfies the output compatibility conditions, see \cite{BV} for the $d=2$ case and \cite{LKMV,SV14a} for the general case. The formula for $x$ is then:
\begin{multline} \label{eq:mult-dim_x}
x(t_1,\ldots,t_d) = e^{i \sum_{j=1}^d t_j A_j} \left( h - \right. \\ i \left. \int_0^{(t_1,\ldots,t_d)} e^{ -i \sum_{j=1}^d s_j A_j} \Phi^* \left(\sum_{j=1}^d \sigma_j u(s_1,\ldots,s_d) ds_j \right) \right).
\end{multline}
We also have the adjoint vessel:
\[
\fV^* = \left(\cH,\cE,-\Phi,\{A_j^*\}_{j=1}^d,\{-\sigma_j\}_{j=1}^d,\{-\gamma_{jk}\}_{j,k=1}^d,\{-\gamma_{*jk}\}_{j,k=1}^d \right)
\]
The adjoint system, namely the associated system of the adjoint vessel, is given by:
\begin{align} \label{eq:adjoint_system}
\begin{split}
& i \pd{\tilde{x}}{t_k} + A_k^* \tilde{x} = \Phi^* \sigma_k \tilde{u}, \\
& \tilde{y} = \tilde{u} + i \Phi \tilde{x}. 
\end{split}
\end{align}

It is proved in \cite{BV}, \cite{LKMV} and \cite{SV14a} that $(u,x,y)$ is a system trajectory for the associated system of $\fV$ if and only if $(y,x,u)$ is a system trajectory for the adjoint system. Using this we deduce the energy balance equations (\cite[Cor.\ 1.2.8]{SV14a}). Namely for a trajectory $(u,x,y)$ of the associated system we have for every $t = (t_1,\ldots,t_d) \in \R^d$:
\begin{multline} \label{eq:energy_balance}
\|x(t + s e_j)\|^2 - \|x(t)\|^2 = \\ \int_{0}^{s} \langle \sigma_j u(t+ p e_j) ,u(t + p e_j) \rangle dp - \int_0^s \langle \sigma_j y(t + p e_j), y(t + p e_j) \rangle dp.
\end{multline}
We briefly recall the proof for the sake of completeness:
\begin{multline*}
\pd{}{t_j} \langle x , x \rangle = \langle \pd{x}{t_j} , x \rangle + \langle x , \pd{x}{t_j} \rangle = \langle i A_j x - i \Phi^* \sigma_j u , x \rangle + \langle x , i A_j x - i \Phi^* \sigma_j u \rangle = \\
\langle i (A_j - A_j^*) x , x \rangle - i \langle u, \sigma_j \Phi x \rangle + i \langle \sigma_j \Phi x , u \rangle = \\ \langle u , \sigma_j ( u - y) \rangle + \langle \sigma_j (u - y) , u \rangle - \langle \sigma_j \Phi x , \Phi x \rangle = \langle \sigma_j u , u \rangle - \langle \sigma_j y , y \rangle.
\end{multline*}
Now all that remains is to integrate with respect to $t_j$ to get the desired result. Note that it follows from this proof that if $(u,x,y)$ is a system trajectory, with $u$ and $y$ locally integrable on every line parallel to one of the axes and such that the system still admits a solution $x$ that is absolutely continuous on every such line and thus almost everywhere differentiable on it (as a function of one variable), then the energy balance equations still hold. We will use this comment in the following sections.

\begin{dfn} \label{dfn:strict_vessel}
We will say that a vessel $\fV$ is strict if:
\begin{itemize}
 \item $\Phi$ is surjective,
 \item $\cap_{j=1}^d \ker \sigma_j = 0$.
\end{itemize} 
We will say that $\fV$ is weakly strict if $\cap_{j=1}^d \ker \Phi^* \sigma_j = 0$.
\end{dfn}

Clearly, if $\fV$ is strict it is weakly strict; the converse is not necessarily true. It was shown in \cite{LKMV} that every $d$-tuple of commuting operators admits an embedding into an essentially unique strict vessel (see \cite{SV14a} for a non-commutative case). The embedding is given as follows:
\begin{align} \label{eq:strict_vessel}
\begin{split}
& \cE = \overline{\sum_{j=1}^d \im(A_j - A_j^*)}, \\
& \Phi = P_{\cE}, \\
& \sigma_j = \frac{1}{i}(A_j - A_j^*)|_{\cE}, \\
& \gamma_{jk} = \frac{1}{i}(A_j A_k^* - A_k A_j^*)_{\cE},\\
& \gamma_{*jk} = \frac{1}{i}(A_j^*A_k - A_k^*A_j)|_{\cE}.
\end{split}
\end{align}
The subspace $\cE$ is called the non-Hermitian subspace of the $d$-tuple $A_1,\ldots,A_d$.

\section{Solution in the Analytic Case} \label{sec:discrete_solve}

Assume that $u$ is a real analytic function with a convergent power series expansion around the origin:
\[
u(t) = \sum_{n \in \N^d} a(n) t^n.
\]
Then plugging the power series into \eqref{eq:input_compat} we get the following difference equation on the coefficients of the power series:
\[
\sigma_k a(n + e_j) - \sigma_j a(n + e_k) + i \gamma_{jk} a(n) = 0.
\]

\begin{rem}
Consider a collection of operators and spaces satisfying the vessel conditions \eqref{eq:vessel_conditions}, i.e., a vessel in the more general sense of \cite{BV} and \cite[Part III]{LKMV} (without the colligation condition \eqref{eq:colligation_condition}). We can associate to such a general vessel a linear overdetermined discrete-time i/s/o system, such that the corresponding system of input compatibility conditions is the system of difference equations obtained above.
\end{rem}

We will assume from now on that $\sigma_1$ is invertible. Denote by $d_j$ the shift operator in the $j$-th coordinate, namely $(d_j a)(n) = a(n + e_j)$, for every $n \in \N^d$. We would like to be able to solve the system for every initial condition along the $n_1$ axis. Consider an equation that contains $\sigma_1$, and multiply it by $\sigma_1^{-1}$ to get:
\begin{equation} \label{eq:k-shift}
a(n + e_k) = \sigma_1^{-1} \sigma_k a(n + e_1) + i \sigma_1^{-1} \gamma_{1k} a(n).
\end{equation}
Applying first $d_j$ and then $d_k$ we get:
\begin{multline*}
a(n + e_k + e_j) = \sigma_1^{-1} \sigma_k a(n + e_j + e_1) + i \sigma_1^{-1} \gamma_{1k} a(n + e_j) = \\ \sigma_1^{-1} \sigma_k \sigma_1^{-1} \sigma_j a( n + 2 e_1) + i \sigma_1^{-1} \sigma_k  \sigma_1^{-1} \gamma_{1j} a(n + e_1) + i \sigma_1^{-1} \gamma_{1k} \sigma_1^{-1} \sigma_j a(n + e_1) - \\ \sigma_1^{-1} \gamma_{1k} \sigma_1^{-1} \gamma_{1j} a(n).
\end{multline*}
Since the shift operators along different axes commute we get that:
\begin{multline*}
\left( \sigma_k \sigma_1^{-1} \sigma_j - \sigma_k \sigma_1^{-1} \sigma_j \right) a( n + 2 e_1) + \left(\gamma_{1j} \sigma_1^{-1} \gamma_{1k} -  \gamma_{1k} \sigma_1^{-1} \gamma_{1j}  \right)a(n) + \\ i \left( \sigma_k  \sigma_1^{-1} \gamma_{1j} + \gamma_{1k} \sigma_1^{-1} \sigma_j - \sigma_j  \sigma_1^{-1} \gamma_{1k} -  \gamma_{1j} \sigma_1^{-1} \sigma_k  \right) a(n + e_1) = 0.
\end{multline*}
Now if we take $n = 0$ and use the fact that we require the system to be solvable for every initial condition along the $n_1$-axis we get the following set of necessary conditions:
\begin{align} \label{eq:commutation_conditions}
\begin{split}
& \bullet [\sigma_1^{-1} \sigma_j, \sigma_1^{-1} \sigma_k] = 0, \\
& \bullet [\sigma_1^{-1} \gamma_{1j}, \sigma_1^{-1} \gamma_{1k}] = 0, \\
& \bullet [\sigma_1^{-1} \sigma_k, \sigma_1^{-1} \gamma_{1j}] = [\sigma_1^{-1} \sigma_j, \sigma_1^{-1} \gamma_{1k}].
\end{split}
\end{align}
There are more necessary conditions, since we have a lot of equations that do not involve $\sigma_1$. We take such an equation and use \eqref{eq:k-shift} to get:
\begin{multline*}
\sigma_k \sigma_1^{-1} \sigma_j a(n + e_1) + i \sigma_k \sigma_1^{-1} \gamma_{1j} a(n) - \sigma_j \sigma_1^{-1} \sigma_k a(n + e_1) - \\ i \sigma_j \sigma_1^{-1} \gamma_{1k} a(n) + i \gamma_{jk} a(n) = 0.
\end{multline*}
If we apply now the necessary conditions \eqref{eq:commutation_conditions} and use again the fact that for $n = 0$ the vector $a(0)$ is arbitrary we get that:
\begin{equation} \label{eq:additional_condition}
\gamma_{jk} = \sigma_j \sigma_1^{-1} \gamma_{1k} - \sigma_k \sigma_1^{-1} \gamma_{1j}.
\end{equation}
Then the following proposition is almost immediate:
\begin{prop} \label{prop:conditions_discrete}
The conditions \eqref{eq:commutation_conditions} and \eqref{eq:additional_condition} are necessary and sufficient for the system of discrete-time input compatibility equations to have a solution for every initial condition along the $n_1$-axis.
\end{prop}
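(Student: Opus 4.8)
The plan is to treat necessity and sufficiency separately, with sufficiency being the substantive part. Throughout I abbreviate $R_k = \sigma_1^{-1}\sigma_k$ and $S_k = i\sigma_1^{-1}\gamma_{1k}$ for $k \geq 2$, so that the rearranged equation \eqref{eq:k-shift} reads $a(n+e_k) = R_k\, a(n+e_1) + S_k\, a(n)$. This is exactly the subfamily of difference equations in which the index $1$ occurs; the remaining equations are those indexed by pairs $j,k \geq 2$. Necessity is essentially contained in the computation preceding the statement: assuming a solution exists for arbitrary data $a(0), a(e_1), a(2e_1),\ldots$ along the $n_1$-axis, equation \eqref{eq:k-shift} holds, and since the genuine shifts on $\N^d$ commute we may compute $a(e_j + e_k)$ by applying \eqref{eq:k-shift} in the two orders. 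Subtracting and using that $a(0), a(e_1), a(2e_1)$ are independent arbitrary vectors forces \eqref{eq:commutation_conditions}; substituting \eqref{eq:k-shift} into an equation indexed by $j,k \geq 2$ and again letting $a(0), a(e_1)$ be arbitrary forces \eqref{eq:additional_condition}.

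For sufficiency I would first define $a$ on all of $\N^d$ by induction on the off-axis degree $|n|' := n_2 + \cdots + n_d$: the axis data gives $a$ when $|n|' = 0$, and for $|n|' \geq 1$ I set $a(n) = R_{k_0}\, a(n - e_{k_0} + e_1) + S_{k_0}\, a(n - e_{k_0})$, where $k_0 = \min\{k \geq 2 : n_k > 0\}$; both arguments have off-axis degree $|n|' - 1$, so this is well-posed as a definition. The crucial lemma is then that, under \eqref{eq:commutation_conditions}, this $a$ in fact satisfies \eqref{eq:k-shift} for every $k \geq 2$ and every $n$, not merely in the distinguished reduction direction $k_0$. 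I would prove this by induction on $|n|'$: when $k$ coincides with the minimal off-axis index of $n+e_k$ the relation holds by definition, and otherwise one expands $a(n+e_k)$ by the reduction in direction $k_0$, applies the inductive hypothesis to push a reduction in direction $k$ down one off-axis level, and compares with $R_k\, a(n+e_1) + S_k\, a(n)$ expanded the same way. The two expressions agree term-by-term precisely because $[R_{k_0}, R_k] = 0$, $[S_{k_0}, S_k] = 0$ and $[R_{k_0}, S_k] = [R_k, S_{k_0}]$, which are the three conditions in \eqref{eq:commutation_conditions}. This handles all equations containing the index $1$ and simultaneously establishes that $a$ is a genuine, order-independent function on $\N^d$.

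It remains to verify the equations indexed by $j,k \geq 2$. Substituting \eqref{eq:k-shift} for both $a(n+e_j)$ and $a(n+e_k)$ into $\sigma_k a(n+e_j) - \sigma_j a(n+e_k) + i\gamma_{jk}a(n)$ collects a coefficient $\sigma_k\sigma_1^{-1}\sigma_j - \sigma_j\sigma_1^{-1}\sigma_k$ of $a(n+e_1)$, which vanishes by the first identity of \eqref{eq:commutation_conditions}, together with a coefficient of $a(n)$ that vanishes exactly when \eqref{eq:additional_condition} holds. Uniqueness is then immediate, since \eqref{eq:k-shift} determines every off-axis value from the axis data.

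The one genuinely nontrivial point — and the place I expect to spend the most care — is the inductive lemma establishing order-independence of the reduction. Here one must check that the two-step computation displayed before the statement not only yields the necessary conditions but, once these are assumed, propagates to arbitrary off-axis degree without generating any new obstructions; in other words, that no triple-reduction consistency condition appears beyond \eqref{eq:commutation_conditions}. Everything else reduces to the bookkeeping of matching coefficients in the expansions above.
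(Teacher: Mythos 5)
Your proposal is correct and takes essentially the same route as the paper: both reduce to the equations involving $\sigma_1$, i.e.\ \eqref{eq:k-shift}, use \eqref{eq:commutation_conditions} to guarantee their mutual compatibility, and use \eqref{eq:additional_condition} to dispose of the equations indexed by $j,k \geq 2$. The only difference is one of packaging --- the paper writes the solution in the closed form \eqref{eq:discret_solution}, as a product of the pairwise commuting operators $(\alpha_j d_1 + i \beta_j)$ applied to the axis data, whereas you establish the same well-definedness by induction on the off-axis degree; unwinding your recursion recovers exactly that formula.
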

\begin{proof}
We have seen above that this set of conditions is necessary. Now for sufficiency note that using \eqref{eq:additional_condition} we can eliminate all of the equations that do not involve $\sigma_1$. The other equations are compatible by \eqref{eq:commutation_conditions} and thus for every initial $a(n_1,0,\ldots,0) = b(n_1)$ they define a unique function $a$. In fact if $a(n_1,0,\ldots,0) = b(n_1)$ we can write $a$ as follows (recall that $d_1$ stands for the shift in the first coordinate):
\begin{equation} \label{eq:discret_solution}
a(n_1,\ldots,n_d) = (\alpha_2 d_1 + i \beta_2)^{n_2} \cdots (\alpha_d d_1 + i \beta_d)^{n_d} b(n_1).
\end{equation}
Here $\alpha_j = \sigma_1^{-1} \sigma_j$ and $\beta_j = \sigma_1^{-1} \gamma_{1j}$. 
\end{proof}

\begin{rem}
For the necessity part of Proposition \ref{prop:conditions_discrete} it is in fact enough to assume that $a(0)$, $a(e_1)$ and $a(2 e_1)$ are arbitrary.
\end{rem}

\begin{dfn}
We will call a vessel that satisfies conditions \eqref{eq:commutation_conditions} and \eqref{eq:additional_condition} \vr{}.
\end{dfn}

\begin{rem}
In \cite[Cor.\ 2.20]{SV14} and the following discussion, similar conditions were given for a tensor $\gamma \in M_n(\C) \otimes \wedge^{k+1} \C^{d+1}$ to be \vr{}. The difference is that in \cite{SV14} we require $\cE$ to be finite-dimensional and we also require generic semisimplicity, whereas in the case at hand we do not need either.
\end{rem}

We now use \eqref{eq:commutation_conditions} and \eqref{eq:additional_condition} to describe the solution in the continuous-time case when the initial condition is an $\cE$-valued analytic function in a neighbourhood of $0$. Recall, that a function $f \colon (-r,r) \to \cE$ is strongly analytic at $0$ if there exist $\{\xi_n\}_{n=0}^{\infty} \subset \cE$, such that $f(t) = \sum_{n=0}^{\infty} \xi_n t^n$, where the series converges in norm for every $t$ in a neighborhood of $0$. This condition is in fact equivalent to weak analyticity, namely that for every $\xi \in \cE$ the function $\langle f(t), \xi \rangle$ is real analytic in a neighbourhood of $0$. The following theorem can be thought of as a version of the classical Cauchy-Kowalevskaya theorem (cf. \cite[Sec.\ I.D]{Fol95}).

\begin{thm} \label{thm:compatibility_solutions_analytic}
Assume that we are given an initial condition $u(t_1,0,\ldots,0) = f(t_1)$ analytic near the origin and that \eqref{eq:commutation_conditions} and \eqref{eq:additional_condition} are satisfied, then there exists an open neighborhood of the origin and a unique analytic solution $u$ to the input compatibility system.
\end{thm}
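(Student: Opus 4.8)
The plan is to reduce the analytic PDE problem to the combinatorial recursion already settled in Proposition \ref{prop:conditions_discrete}, and then to upgrade the resulting formal power series to a genuinely convergent one via Cauchy-type estimates. Since $f$ is strongly analytic at the origin, write $f(t_1) = \sum_{n_1 = 0}^{\infty} b(n_1) t_1^{n_1}$ and seek $u(t) = \sum_{n \in \N^d} a(n) t^n$ with $a(n_1, 0, \ldots, 0) = b(n_1)$. Plugging this series into the strict input compatibility system \eqref{eq:input_compat} and matching coefficients shows, exactly as at the start of this section, that $u$ solves the system if and only if the coefficients satisfy $\sigma_k a(n + e_j) - \sigma_j a(n + e_k) + i \gamma_{jk} a(n) = 0$. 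Under the hypotheses \eqref{eq:commutation_conditions} and \eqref{eq:additional_condition}, Proposition \ref{prop:conditions_discrete} guarantees that these equations have a unique solution for the prescribed data along the $n_1$-axis, given explicitly by \eqref{eq:discret_solution}, namely $a(n) = (\alpha_2 d_1 + i\beta_2)^{n_2} \cdots (\alpha_d d_1 + i\beta_d)^{n_d} b(n_1)$ with $\alpha_j = \sigma_1^{-1}\sigma_j$ and $\beta_j = \sigma_1^{-1}\gamma_{1j}$.

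The substance of the theorem is therefore convergence, and here the explicit formula \eqref{eq:discret_solution} does the work. Analyticity of $f$ gives a bound $\|b(m)\| \le C R^m$ for all $m$ and some $C, R > 0$. Fix $j$ and set $T_j = \alpha_j d_1 + i\beta_j$; since $d_1$ commutes with the pointwise action of $\alpha_j$ and $\beta_j$ on $\cE$-valued sequences, any sequence $c$ with $\|c(m)\| \le \tilde C R^m$ satisfies $\|(T_j c)(m)\| \le \|\alpha_j\| \tilde C R^{m+1} + \|\beta_j\| \tilde C R^m = \tilde C K_j R^m$, where $K_j = \|\alpha_j\| R + \|\beta_j\|$. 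Iterating each $T_j^{n_j}$ and composing over $j = 2, \ldots, d$, then evaluating at $m = n_1$ and setting $K_1 = R$, I obtain the Cauchy-type estimate $\|a(n)\| \le C \prod_{j=1}^d K_j^{n_j}$. Consequently $\sum_{n} \|a(n)\| \, |t^n| \le C \prod_{j=1}^d (1 - K_j |t_j|)^{-1}$ converges on the polydisc $\{t \in \R^d : K_j |t_j| < 1, \ j = 1, \ldots, d\}$, an open neighborhood of the origin, with absolute and locally uniform convergence.

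It then remains to check that the sum $u$ genuinely solves \eqref{eq:input_compat} and is unique. Within its polydisc of convergence the series may be differentiated term by term, so $u$ is analytic there and its partial derivatives are the termwise differentiated series; since the coefficients satisfy the difference equations by construction, each compatibility equation in \eqref{eq:input_compat} holds identically. For uniqueness, any analytic solution near the origin has an expansion whose coefficients necessarily obey the same recursion with the same axis data $b(n_1)$, and the uniqueness half of Proposition \ref{prop:conditions_discrete} forces these coefficients, hence $u$ itself, to coincide with the one just constructed. I expect the only delicate point to be the convergence estimate: one must exploit the special form of \eqref{eq:discret_solution}---the fact that every index-raising in $n_2, \ldots, n_d$ is implemented by a single copy of the shift $d_1$ carrying a fixed geometric rate $R$---rather than attempting a crude bound on iterated products of $\sigma_1^{-1}\sigma_j$ and $\sigma_1^{-1}\gamma_{1k}$, which would not obviously close. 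Everything else is the standard passage between analytic functions and their Taylor recursions.
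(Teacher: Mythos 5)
Your proposal is correct and follows essentially the same route as the paper: reduce to the discrete recursion of Proposition \ref{prop:conditions_discrete}, invoke the explicit solution formula \eqref{eq:discret_solution}, and prove convergence by noting that each factor $\alpha_j d_1 + i\beta_j$ preserves the geometric rate in $n_1$ while multiplying the constant by a fixed factor --- your per-direction constants $K_j$ merely refine the paper's single constant $C = \max_j\{\|\alpha_j\|,\|\beta_j\|\}$, and you additionally spell out the uniqueness and term-by-term differentiation steps that the paper leaves implicit. One caveat you share with the paper: matching coefficients of $\sum_n a(n)t^n$ in \eqref{eq:input_compat} actually produces factors $(n_j+1)$, $(n_k+1)$ on the shifted terms, so the factorless recursion (and hence \eqref{eq:discret_solution}) is literally valid only in the normalization $u = \sum_n a(n)t^n/n!$, after which the convergence estimate requires the corresponding factorial bookkeeping --- a repairable point, but worth noting.
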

\begin{proof}
First note that as in the discrete-time case, \eqref{eq:additional_condition} allows us to eliminate all of the equations that do not involve $\sigma_1$, hence we are left with the system ($j = 2,\ldots,d$):
\[
\pd{u}{t_j} = \alpha_j \pd{u}{t_1} + \beta_j u.
\]
Here $\alpha_j = \sigma_1^{-1} \sigma_j$ and $\beta_j = i \sigma_1^{-1} \gamma_{1j}$. We write an expansion for the initial condition $f$ and the solution $u$ in a polydisc around the origin and solve for the coefficients. This reduces the problem to the discrete-time case that we have already seen, except that we have to verify that the series for $u$ is locally convergent. If the radius of convergence of the series of $f(t_1) = \sum_{m=0}^{\infty} b(m) t_1^m$ around $0$ is $R$, then for every $0 < r < R$, there exists a constant $M > 0$, such that $\|b(m)\| \leq \frac{M}{r^m}$. Now we note that we can obtain the coefficients of $u$ in terms of the $b(m)$ using Equation \ref{eq:discret_solution}. If we set $C = \max \{ \|\alpha_j\|,\|\beta_j\| \mid j = 2, \ldots d \}$, then we get that:
\[
\|a(n)\| \leq C^{|n| - n_1} M \frac{1}{r^{n_1}} (1 + \frac{1}{r})^{|n| - n_1}.
\]
Therefore, the series for $u$ will converge in the closed polydisc around the origin with polyradius $(r, \frac{r}{C(r+1)}, \ldots, \frac{r}{C(r+1)})$. Since this is true for every $0 < r < R$, we see that the series for $u$ will converge in the polydisc around the origin with polyradius $(R, \frac{R}{C(R+1)}, \ldots, \frac{R}{C(R+1)})$.

\end{proof}

\section{Very Reasonable Conditions} \label{sec:vr}

In this Section we will discuss the conditions \eqref{eq:commutation_conditions} and \eqref{eq:additional_condition} that we will call very reasonable conditions or $VR$ conditions for brevity. We have defined the $VR$ conditions in the case of the input compatibility system. A similar set of conditions arises at the output, namely for $j,k = 2,\ldots,d$:
\begin{align} \label{eq:output_VR}
\begin{split}
& \bullet [\sigma_1^{-1} \sigma_j, \sigma_1^{-1} \sigma_k] = 0, \\
& \bullet [\sigma_1^{-1} \gamma_{*1j}, \sigma_1^{-1} \gamma_{*1k}] = 0, \\
& \bullet [\sigma_1^{-1} \sigma_k, \sigma_1^{-1} \gamma_{*1j}] = [\sigma_1^{-1} \sigma_j, \sigma_1^{-1} \gamma_{*1k}],\\
& \bullet \gamma_{*jk} = \sigma_j \sigma_1^{-1} \gamma_{*1k} - \sigma_k \sigma_1^{-1} \gamma_{*1j}.
\end{split}
\end{align}
Let us call this system of conditions $VR_*$ conditions. We will now investigate the relation between the $VR$ and $VR_*$ conditions. 

\begin{prop} \label{prop:VR_iff_VR_*}
Given a vessel $\fV$, it satisfies the $VR$ conditions if and only if it satisfies the $VR_*$ conditions.
\end{prop}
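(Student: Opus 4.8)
The plan is to establish the single implication that the $VR$ conditions imply the $VR_*$ conditions for an arbitrary vessel, and then recover the converse by applying this implication to the adjoint vessel $\fV^*$. A direct check of \eqref{eq:vessel_conditions} for $\fV^*$ shows that its self-adjoint operators are $-\sigma_j$ (so $-\sigma_1$ is again invertible) and that the operators occupying the $\gamma$- and $\gamma_*$-slots, i.e.\ those satisfying the second and third lines of \eqref{eq:vessel_conditions}, are $-\gamma_{*jk}$ and $-\gamma_{jk}$ respectively. Substituting these data into \eqref{eq:commutation_conditions}--\eqref{eq:additional_condition} all signs cancel, and one reads off that $\fV^*$ satisfies the $VR$ conditions precisely when $\fV$ satisfies the $VR_*$ conditions \eqref{eq:output_VR}; since $(\fV^*)^*=\fV$, the same observation gives that $\fV^*$ satisfies $VR_*$ precisely when $\fV$ satisfies $VR$. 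Hence a universal implication $VR\Rightarrow VR_*$ yields both directions at once. Observe also that the first line of \eqref{eq:commutation_conditions} coincides with the first line of \eqref{eq:output_VR}, so that condition is common to both and may be assumed throughout.

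The engine of the proof is the solvability characterization already in hand: by Proposition~\ref{prop:conditions_discrete} and Theorem~\ref{thm:compatibility_solutions_analytic} the $VR$ conditions are equivalent to solvability of the strict input system \eqref{eq:input_compat} for an arbitrary analytic initial condition along the $t_1$-axis, and the verbatim computation carried out at the output (with $\gamma_{*jk}$ in place of $\gamma_{jk}$) shows that the $VR_*$ conditions are equivalent to solvability of the strict output system \eqref{eq:output_compat} for an arbitrary analytic initial condition along the $t_1$-axis. Thus it suffices to prove: assuming $VR$, the system \eqref{eq:output_compat} admits an analytic solution on a neighborhood of the origin with arbitrarily prescribed data $g(t_1)$ on the $t_1$-axis.

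To produce such a $y$ I would first invert the one-dimensional input/output map along the $t_1$-axis, where no compatibility constraints are present. Feeding $g$ as the input of the one-dimensional adjoint system \eqref{eq:adjoint_system} with zero initial state and solving the resulting ordinary differential equation yields analytic data $f(t_1)$; the trajectory correspondence $(u,x,y)\leftrightarrow(y,x,u)$ between $\fV$ and $\fV^*$ then guarantees that running the one-dimensional $\fV$-system on $f$ with zero initial state returns exactly $g$. Now I invoke $VR$: Theorem~\ref{thm:compatibility_solutions_analytic} provides a unique analytic $u$ near the origin solving the full input system \eqref{eq:input_compat} with $u(t_1,0,\ldots,0)=f(t_1)$. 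Taking the state $x$ from \eqref{eq:mult-dim_x} with $x(0)=0$ and setting $y=u-i\Phi x$, the propagation result that an input-compatible $u$ forces an output-compatible $y$ shows that $y$ solves \eqref{eq:output_compat}. Finally, the restriction of $(u,x,y)$ to the $t_1$-axis is the one-dimensional $\fV$-trajectory with input $f$ and zero initial state, whose output was arranged to be $g$, so $y(t_1,0,\ldots,0)=g(t_1)$. As $g$ is arbitrary, the output system is solvable for every analytic axis datum, and the necessity argument of Proposition~\ref{prop:conditions_discrete} applied at the output (only the coefficients $a(0),a(e_1),a(2e_1)$ are needed, and these are free once $g$ is free) delivers $VR_*$.

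The crux is exactly this realization of an arbitrary output datum, which is where a purely algebraic attack stalls. Writing $\alpha_j=\sigma_1^{-1}\sigma_j$, $\beta_j=\sigma_1^{-1}\gamma_{1j}$ and $\beta_{*j}=\sigma_1^{-1}\gamma_{*1j}$, the last line of \eqref{eq:vessel_conditions} gives $\beta_{*j}-\beta_j=i[\Phi\Phi^*\sigma_1,\alpha_j]$, and from this the analogue of \eqref{eq:additional_condition} for $\gamma_{*jk}$ and the third-line commutator identity of \eqref{eq:output_VR} both follow by short computations from the first $VR$ line and the Jacobi identity. The remaining identity $[\beta_{*j},\beta_{*k}]=0$, however, does not reduce to the input conditions by algebra alone: it encodes the commutativity of $A_1,\ldots,A_d$ and the factorization of the data through $\Phi$, and it is precisely this content that the trajectory correspondence injects into the argument above. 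The only point requiring care is to confirm that the cited input-to-output propagation is stated for the strict systems \eqref{eq:input_compat}--\eqref{eq:output_compat}, not merely for their $\Phi^*$-weakened forms \eqref{eq:ns_input_compat}--\eqref{eq:ns_output_compat}.
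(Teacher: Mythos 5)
Your proposal is correct, but it takes a genuinely different route from the paper's. The paper proves $VR\Rightarrow VR_*$ by direct algebra: it uses the linkage condition to trade each $\gamma_{jk}$ for $\gamma_{*jk}$ plus $\Phi\Phi^*$-terms, reads off the fourth and third conditions of \eqref{eq:output_VR} by substitution, and then establishes the commutator condition $[\sigma_1^{-1}\gamma_{*1j},\sigma_1^{-1}\gamma_{*1k}]=0$ through a lengthy cancellation using auxiliary identities (\eqref{mux}) in which the operators $A_j$ and their commutativity enter; the reverse implication is dismissed as ``symmetric.'' You instead exploit the system-theoretic meaning of both condition sets: by Proposition \ref{prop:conditions_discrete}, the remark following it, and Theorem \ref{thm:compatibility_solutions_analytic}, the $VR$ (resp.\ $VR_*$) conditions are exactly solvability of the strict input (resp.\ output) system for arbitrary analytic data on the $t_1$-axis; the strict input-to-output propagation result quoted in Section \ref{sec:vessels}, combined with inverting the one-dimensional input/output map along the axis via the adjoint system \eqref{eq:adjoint_system}, transfers solvability from input to output, and the adjoint-vessel symmetry upgrades the single implication to the equivalence. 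Your two supporting observations both check out: in $\fV^*$ the operators actually satisfying the second and third lines of \eqref{eq:vessel_conditions} are indeed $-\gamma_{*jk}$ and $-\gamma_{jk}$ (the functionally correct reading, even though the paper's display of $\fV^*$ lists the slots in the opposite order), so $VR(\fV^*)$ is literally $VR_*(\fV)$; and the propagation result you flag as the crux is in fact the strict-to-strict statement — this is what \cite{LKMV} provides, it follows from a short computation using the linkage and output vessel conditions together with the state equations, and it is also how the paper itself uses the result in Section \ref{sec:dilations}, where uniqueness of solutions of the strict output system given axis data is invoked — so your one flagged dependency is met. As for what each approach buys: the paper's computation is self-contained, needing nothing beyond the vessel identities, but it is opaque; yours is conceptual, exhibiting the proposition as the statement that the same overdetermined system is well-posed from the input end iff from the output end, at the cost of leaning on the propagation theorem that the paper only cites and of an analytic (Cauchy--Kowalevskaya-type) detour to prove what is ultimately an algebraic identity.
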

\begin{proof}
It suffices to prove only one implication, since the proof of the other will be symmetric. Assume that the $VR$ conditions hold and thus the first condition of the $VR_*$ conditions is automatically satisfied.

From the linkage vessel condition, we get:
\begin{equation} \label{link}
\gamma_{jk} = \gamma_{*jk} - i \sigma_j \Phi \Phi^* \sigma_k + i \sigma_k \Phi \Phi^* \sigma_j.
\end{equation}

To prove that the fourth compatibility condition of \eqref{eq:output_VR} holds, we use the linkage condition to obtain:
\begin{equation*}
\gamma_{*jk} = \gamma_{jk} + i \sigma_j \Phi \Phi^* \sigma_k - i \sigma_k \Phi \Phi^* \sigma_j.
\end{equation*}
Then equation \eqref{eq:additional_condition} yields:
\begin{equation} \label{partial-four}
\gamma_{*jk} = \sigma_j \sigma_1^{-1} \gamma_{1k} - \sigma_k \sigma_1^{-1} \gamma_{1j}  i \sigma_j \Phi \Phi^* \sigma_k - i \sigma_k \Phi \Phi^* \sigma_j.
\end{equation}
Now using \eqref{link}, we get:
\begin{align} \label{mega-link}
\begin{split}
& \sigma_j \sigma_1^{-1} \gamma_{1k} = \sigma_j \sigma_1^{-1} \gamma_{*1k} - i \sigma_j \Phi \Phi^* \sigma_k + i \sigma_j \sigma_1^{-1} \sigma_k \Phi \Phi^* \sigma_1 \\
& \gamma_{1k} \sigma_1^{-1} \sigma_j = \sigma_j \sigma_1^{-1} \gamma_{*1k} - i \sigma_1 \Phi \Phi^* \sigma_k \sigma_1^{-1} \sigma_j + i \sigma_k \Phi \Phi^* \sigma_j\\
\end{split}
\end{align}
Plugging in the equations of \eqref{mega-link} into \eqref{partial-four}, and using the first condition of \eqref{eq:commutation_conditions}, we obtain the fourth equation of \eqref{eq:output_VR}.

Similarly we consider the third equation of \eqref{eq:commutation_conditions}. We use \eqref{mega-link} and obtain immediately the third condition of \eqref{eq:output_VR}.

Using the vessel conditions one obtains the following equation:
\begin{align} \label{mux}
\begin{split}
& \sigma_j \Phi \Phi^* \sigma_k \Phi \Phi^* \sigma_l = i \sigma_j \Phi A_k^* \Phi^* \sigma_k - i \sigma_j \Phi A_k \Phi^* \sigma_l \\
& i \gamma_{*jk} \Phi \Phi^* \sigma_l = i \sigma_k \Phi A A_j \Phi^* \sigma_l - i \sigma_j \Phi A_k \Phi^* \sigma_l \\
& i \sigma_j \Phi \Phi^* \gamma_{*kl} = i \sigma_j \Phi A_k^* \Phi^* \sigma_l - i \sigma_j \Phi A_l^* \Phi^* \sigma_k.
\end{split}
\end{align}
Using \eqref{mega-link} on the second condition of \eqref{eq:commutation_conditions}. we get:
\begin{align}
\begin{split}
& 0 = \gamma_{*1j} \sigma_1^{-1} \gamma_{*1k} - \gamma_{*1k} \sigma_1^{-1} \gamma_{*1j} - i \gamma_{*1j} \Phi \Phi^* \sigma_k + \\
& + i \gamma_{*1k} \Phi \Phi^* \sigma_j +i (\gamma_{*1j} \sigma_1^{-1} \sigma_k - \gamma_{*1k} \sigma_1^{-1} \sigma_j) \Phi \Phi^* \sigma_1 + \\
& + i \sigma_j \Phi \Phi^* \gamma_{*1k} - i \sigma_3 \Phi \Phi^* \gamma_{*1j} + i \sigma_1 \Phi \Phi^* (\sigma_k \sigma_1^{-1} \gamma_{*1j} - \sigma_j \sigma_1^{-1} \gamma_{*1k}) - \\
& - \sigma_1 \Phi \Phi^* \sigma_j \Phi \Phi^* \sigma_k + \sigma_j \Phi \Phi^* \sigma_1 \Phi \Phi^* \sigma_k - \sigma_j \Phi \Phi^* \sigma_j \Phi \Phi^* \sigma_1 + \\
& + \sigma_1 \Phi \Phi^* \sigma_j \Phi \Phi^* \sigma_k - \sigma_k \Phi \Phi^* \sigma_1 \Phi \Phi^* \sigma_j + \sigma_k \Phi \Phi^* \sigma_j \Phi \Phi^* \sigma_1
\end{split}
\end{align}
Now using the fourth equation of \eqref{eq:output_VR} and \eqref{mux}, we see that all the terms cancel, but for the first two. Thus we have obtained the second equation of \eqref{eq:output_VR} and the proposition is proved.
\end{proof}

Now that we know that the $VR$ conditions fit naturally into the framework of vessels, we ask a question about invariance under coordinate changes. Namely, we assume that $\sigma_2$ is invertible as well and we can write a system of $VR$ conditions for $\sigma_2$, for $j,k =1,3,4,\ldots,d$:
\begin{align} \label{eq:sigma2_VR}
\begin{split}
& \bullet [\sigma_2^{-1} \sigma_j, \sigma_2^{-1} \sigma_k] = 0, \\
& \bullet [\sigma_2^{-1} \gamma_{2j}, \sigma_2^{-1} \gamma_{2k}] = 0, \\
& \bullet [\sigma_2^{-1} \sigma_k, \sigma_2^{-1} \gamma_{2j}] = [\sigma_2^{-1} \sigma_j, \sigma_2^{-1} \gamma_{2k}],\\
& \bullet \gamma_{jk} = \sigma_j \sigma_2^{-1} \gamma_{2k} - \sigma_k \sigma_2^{-1} \gamma_{2j}.
\end{split}
\end{align}
We will call this system of conditions $VR$ conditions in the direction of $e_2$ and we refer to the original $VR$ conditions as $VR$ conditions in the direction of $e_1$.

\begin{prop} \label{prop:coord_change}
Given a vessel $\fV$ such that both $\sigma_1$ and $\sigma_2$ are invertible, then the $VR$ conditions in the direction of $e_1$ are satisfied if and only if the $VR$ conditions in the direction of $e_2$ are satisfied.
\end{prop}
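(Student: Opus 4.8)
The plan is to avoid manipulating the operators $\gamma_{jk}$ directly and instead to exploit the characterization of the $VR$ conditions as a solvability statement, namely Proposition \ref{prop:conditions_discrete}. Since the hypothesis is symmetric in the indices $1$ and $2$, it suffices to prove one implication, so I would assume the $VR$ conditions in the direction of $e_1$. By Proposition \ref{prop:conditions_discrete} the discrete-time input compatibility system is then solvable for every initial condition along the $n_1$-axis, and every solution is given by \eqref{eq:discret_solution}. On the other hand, running the derivation preceding Proposition \ref{prop:conditions_discrete} with the roles of the indices $1$ and $2$ interchanged (multiplying by $\sigma_2^{-1}$ in place of $\sigma_1^{-1}$, which is where the invertibility of $\sigma_2$ is used) shows that the conditions \eqref{eq:sigma2_VR} are necessary and sufficient for the \emph{same} difference system to be solvable for every initial condition along the $n_2$-axis. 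Thus the statement reduces to the assertion: if the system is solvable for all data on the $n_1$-axis, then it is solvable for all data on the $n_2$-axis.

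First I would fix an arbitrary prescribed sequence $\{c(m)\}_{m \ge 0} \subset \cE$, to be realized as $a(0,m,0,\ldots,0)$, and look for an $n_1$-axis datum $b$ (with $b(n_1) = a(n_1,0,\ldots,0)$) whose associated solution \eqref{eq:discret_solution} restricts to $c$ along the $n_2$-axis. Setting $n = (0,m,0,\ldots,0)$ in \eqref{eq:discret_solution} gives
\begin{equation*}
c(m) = a(0,m,0,\ldots,0) = \big( (\alpha_2 d_1 + i \beta_2)^m b \big)(0), \qquad \alpha_2 = \sigma_1^{-1}\sigma_2, \ \beta_2 = \sigma_1^{-1}\gamma_{12}.
\end{equation*}
Since $d_1$ commutes with the constant-coefficient operators $\alpha_2$ and $\beta_2$, expanding the $m$-th power and evaluating at $0$ shows that the unique term involving $b(m)$ comes from $(\alpha_2 d_1)^m b = \alpha_2^m b(m\,e_1)$, while every other term shifts $b$ by at most $m-1$ and hence depends only on $b(0),\ldots,b(m-1)$. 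Therefore
\begin{equation*}
c(m) = \alpha_2^{\,m}\, b(m) + \Psi_m\big( b(0),\ldots,b(m-1) \big)
\end{equation*}
for a fixed operator-linear map $\Psi_m$.

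The key point is that $\alpha_2 = \sigma_1^{-1}\sigma_2$ is invertible, because both $\sigma_1$ and $\sigma_2$ are; this is precisely where the hypothesis of the proposition enters. Consequently $\alpha_2^{\,m}$ is invertible for every $m$, and the triangular system above can be solved recursively by $b(0) = c(0)$ and $b(m) = \alpha_2^{-m}\big( c(m) - \Psi_m(b(0),\ldots,b(m-1)) \big)$. Thus for every prescribed $n_2$-axis datum $c$ there is an $n_1$-axis datum $b$ whose solution \eqref{eq:discret_solution} — which genuinely satisfies all the difference equations, by the sufficiency half of Proposition \ref{prop:conditions_discrete} — restricts to $c$ along the $n_2$-axis. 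In particular the difference system is solvable for every initial condition along the $n_2$-axis. By the necessity half of the $e_2$-analog of Proposition \ref{prop:conditions_discrete} (for which, by the remark following it, it already suffices that $a(0)$, $a(e_2)$ and $a(2e_2)$ be arbitrary, all of which my construction realizes), the conditions \eqref{eq:sigma2_VR} hold. Interchanging $1$ and $2$ yields the converse.

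I expect the only real subtlety to be the bookkeeping in the power-expansion step — verifying that $(\alpha_2 d_1 + i\beta_2)^m$, evaluated at the origin, is genuinely triangular in $b(0),\ldots,b(m)$ with invertible leading coefficient $\alpha_2^{\,m}$ — together with making precise that the necessity direction of Proposition \ref{prop:conditions_discrete} transfers verbatim to the $n_2$-axis once $\sigma_2$ is assumed invertible. Everything else is formal; in particular this route requires no manipulation of the linkage condition and no direct computation with the operators $\gamma_{jk}$ for $j,k \ge 2$.
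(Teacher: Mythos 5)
Your argument is correct, but it takes a genuinely different route from the paper's. The paper proves this proposition by direct operator algebra: assuming \eqref{eq:sigma2_VR}, it derives the conditions \eqref{eq:commutation_conditions} and \eqref{eq:additional_condition} one at a time, by pre- and post-multiplying identities such as $\sigma_1 \sigma_2^{-1} \sigma_k = \sigma_k \sigma_2^{-1} \sigma_1$ by $\sigma_1^{-1}$ and substituting the fourth condition of \eqref{eq:sigma2_VR} into the expression for $\gamma_{jk}$ (with some of the verifications, e.g.\ the second commutation condition, left as ``in exactly the same way''). You instead treat both sets of conditions as instances of the solvability criterion of Proposition \ref{prop:conditions_discrete} --- solvability of the \emph{same} difference system for arbitrary data on the $n_1$-axis, respectively the $n_2$-axis --- and transfer solvability from one axis to the other. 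Your transfer step is sound: since $d_1$ commutes with $\alpha_2$ and $\beta_2$, the only word in the expansion of $(\alpha_2 d_1 + i \beta_2)^m$ carrying $d_1^m$ is $\alpha_2^m d_1^m$, so restriction of \eqref{eq:discret_solution} to the $n_2$-axis is triangular in $b(0), b(1), \ldots$ with diagonal entries $\alpha_2^m$, and $\alpha_2 = \sigma_1^{-1}\sigma_2$ is invertible precisely because both $\sigma_1$ and $\sigma_2$ are --- this is exactly where the hypothesis of the proposition enters. The necessity half of the $e_2$-analogue of Proposition \ref{prop:conditions_discrete} (which, per the remark following that proposition, only needs $a(0)$, $a(e_2)$, $a(2e_2)$ arbitrary, and your construction supplies arbitrary values there) then yields \eqref{eq:sigma2_VR}, and symmetry gives the converse. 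What your route buys: no manipulation of the $\gamma_{jk}$ at all, a uniform argument with no case-by-case verification, and a conceptual explanation of the equivalence --- both condition sets are parametrizations of the same solvable system viewed from different axes. What the paper's computation buys: explicit algebraic identities relating the two families of operators, and independence from the discrete-time solvability apparatus; but since Proposition \ref{prop:conditions_discrete} is established earlier in the same development, your reliance on it costs nothing.
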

\begin{proof}
Let us assume that the $VR$ conditions in the direction of $e_2$ hold. Then for every $k = 3,\ldots,d$ we have that:
\[
\sigma_1 \sigma_2^{-1} \sigma_k = \sigma_k \sigma_2^{-1} \sigma_1.
\]
Premultiplying by $\sigma_1^{-1}$ we get:
\[
\sigma_2^{-1} \sigma_k = \sigma_1^{-1} \sigma_k \sigma_2^{-1} \sigma_1.
\]
Hence for every $j,k = 3,\ldots,d$ we get:
\[
\sigma_j \sigma_2^{-1} \sigma_k = \sigma_j \sigma_1^{-1} \sigma_k \sigma_2^{-1} \sigma_1.
\]
Now using the first $VR$ condition for $\sigma_2$ we get:
\[
\sigma_j \sigma_1^{-1} \sigma_k \sigma_2^{-1} \sigma_1 = \sigma_k \sigma_1^{-1} \sigma_j \sigma_2^{-1} \sigma_1.
\]
This gives us the first $VR$ condition in the direction of $e_1$, namely the first equation of \eqref{eq:commutation_conditions} for $j,k = 3,\ldots,d$. We only need to check for $j=2$ and $k=3,\ldots,d$:
\begin{multline*}
\sigma_2 \sigma_1^{-1} \sigma_k - \sigma_k \sigma_1^{-1} \sigma_2 = \sigma_2 \sigma_1^{-1} \left( \sigma_k - \sigma_1 \sigma_2^{-1} \sigma_k \sigma_1^{-1} \sigma_2 \right) = \\ \sigma_2 \sigma_1^{-1} \left(\sigma_k - \sigma_k \sigma_2^{-1} \sigma_1 \sigma_1^{-1} \sigma_2 \right) = 0.
\end{multline*}
We get the second equation of \eqref{eq:commutation_conditions} in exactly the same way. Combining the two we get easily the third equation of \eqref{eq:commutation_conditions}.

Now we use the fourth equation of \eqref{eq:sigma2_VR} to get that for every $k=3,\ldots,d$:
\[
\gamma_{1k} = \sigma_1 \sigma_2^{-1} \gamma_{2k} - \sigma_k \sigma_2^{-1} \gamma_{21}.
\]
Therefore for $j,k = 2,\ldots,d$:
\begin{multline*}
\sigma_j \sigma_1^{-1} \gamma_{1k} - \sigma_k \sigma_1^{-1} \gamma_{1j} = \sigma_j \sigma_2^{-1} \gamma_{2k} - \sigma_j \sigma_1^{-1} \sigma_k \sigma_2^{-1} \gamma_{21} \\ - \sigma_k \sigma_2^{-1} \gamma_{2j} + \sigma_k \sigma_1^{-1} \sigma_j \sigma_2^{-1} \gamma_{21} = \gamma_{jk}. 
\end{multline*}
Here we have used the first equation of \eqref{eq:commutation_conditions} and the fourth equation of \eqref{eq:sigma2_VR} for $j,k = 3,\ldots,d$. For $j = 2$ and $k = 3,\ldots,d$ we get:
\begin{multline*}
\sigma_2 \sigma_1^{-1} \gamma_{1k} - \sigma_k \sigma_1^{-1} \gamma_{12} = \\ \gamma_{2k} - \sigma_2 \sigma_1^{-1} \sigma_k \sigma_2^{-1} \gamma_{21} - \sigma_k \sigma_1^{-1} \gamma_{12} = \gamma_{2k}.
\end{multline*}
Hence we proved that \eqref{eq:additional_condition} holds.
\end{proof}

Given a vessel $\fV$ we can consider as in \cite{SV14a} the linear maps $\rho \colon \R^d \to B(\cH)$, $\sigma \colon \R^d \to B(\cH)$ and $\gamma \colon \wedge^2 \R^d \to B(\cH)$ given by:
\[
\rho(e_j) = A_j, \, \sigma(e_j) = \sigma_j, \, \gamma(e_j \wedge e_k) = \gamma_{jk}.
\]
Then for every $T \in \GL_d(\R)$ we can define:
\[
A_j^T = \rho(T e_j), \, \sigma_j^T = \sigma(T e_j), \, \gamma_{jk}^T = \gamma (\wedge^2(T) e_j \wedge e_k).
\]
Thus we get a vessel $\fV^T$, we call this the coordinate change corresponding to $T$. We will say that $\fV$ satisfies the $VR$ conditions in the direction of $T e_1$ if $\fV^T$ satisfies the $VR$ conditions in the direction of $e_1$, generalizing \eqref{eq:sigma2_VR}.
\begin{cor} \label{cor:generic_change}
Let us assume that there exist $\xi, \eta \in \R^d$, such that both $\xi \sigma = \sum_{j=1}^d \xi_j \sigma_j$ and $\eta \sigma$ are invertible and $\fV$ satisfies the $VR$ conditions in the direction of $\xi$ then it satsfies the $VR$ conditions in the direction of $\eta$. In particular, there exists an open set $U \subset \GL_d(\R)$, such that for every $T \in U$, $\fV$ satisfies the $VR$ conditions in the direction of $T \xi$.
\end{cor}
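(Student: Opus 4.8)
The plan is to deduce the corollary from Proposition \ref{prop:coord_change} by a single coordinate change, so that the only genuine analytic input is the equivalence of the $VR$ conditions in the directions of $e_1$ and $e_2$ already established there. First I would record the functoriality of the coordinate change: since $\rho,\sigma$ are linear and $\gamma$ is linear on $\wedge^2\R^d$, one checks directly that $(\fV^T)^S=\fV^{TS}$ for all $S,T\in\GL_d(\R)$. Indeed, writing $\sigma^{\fV^T}$ for the structure map $v\mapsto\sigma(Tv)$ of $\fV^T$, we get $\sigma^{(\fV^T)^S}(e_j)=\sigma^{\fV^T}(Se_j)=\sigma(TSe_j)=\sigma^{\fV^{TS}}(e_j)$, and the same for $\rho$ and for $\gamma$ via $\wedge^2(T)\wedge^2(S)=\wedge^2(TS)$.

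For the main case, assume $\xi,\eta$ are linearly independent and choose $T\in\GL_d(\R)$ with $Te_1=\xi$ and $Te_2=\eta$. By the definition of the $VR$ conditions in a direction, and reading it for this $T$, the vessel $\fV$ satisfies $VR$ in direction $\xi$ if and only if $\fV^T$ satisfies \eqref{eq:commutation_conditions} and \eqref{eq:additional_condition} in the direction of $e_1$. The first two $\sigma$-operators of $\fV^T$ are $\sigma_1^{\fV^T}=\sigma(\xi)=\xi\sigma$ and $\sigma_2^{\fV^T}=\sigma(\eta)=\eta\sigma$, both invertible by hypothesis, so Proposition \ref{prop:coord_change} applies to $\fV^T$ and shows that $\fV^T$ satisfies the $VR$ conditions in the direction of $e_1$ if and only if it satisfies them in the direction of $e_2$, i.e. \eqref{eq:sigma2_VR} for $\fV^T$.

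It then remains to reconcile the $e_2$-direction conditions for $\fV^T$ with the $e_1$-based definition of direction $\eta$. Let $P$ be the transposition interchanging $e_1$ and $e_2$. Comparing \eqref{eq:sigma2_VR} for a vessel $\fW$ with \eqref{eq:commutation_conditions}, \eqref{eq:additional_condition} for $\fW^P$, whose operators are $\sigma_{P(j)}^{\fW}$ and $\gamma_{P(j)P(k)}^{\fW}$, shows that the two systems coincide term by term under the relabeling $j\mapsto P(j)$; hence $\fW$ satisfies the conditions in direction $e_2$ if and only if $\fW^P$ satisfies them in direction $e_1$. Applying this with $\fW=\fV^T$ and using $(\fV^T)^P=\fV^{TP}$ together with $TPe_1=Te_2=\eta$, the chain $\xi\to e_1\to e_2\to\eta$ gives that $\fV$ satisfies $VR$ in direction $\xi$ if and only if it satisfies $VR$ in direction $\eta$. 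When $\xi,\eta$ are collinear, say $\eta=c\xi$ with $c\neq0$, I would instead use $T'=T\,\mathrm{diag}(c,1,\ldots,1)$: this multiplies $\sigma_1^{\fV^{T'}}$ and each $\gamma_{1k}^{\fV^{T'}}$ by $c$ while fixing the remaining $\sigma_j,\gamma_{jk}$, so every quantity $\sigma_1^{-1}\gamma_{1k}$ and every identity in \eqref{eq:commutation_conditions}, \eqref{eq:additional_condition} is unaffected, and the $VR$ conditions in directions $\xi$ and $c\xi$ literally coincide.

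For the final assertion, the map $T\mapsto\sigma(T\xi)=(T\xi)\sigma$ is continuous (in fact linear in $T$) and $\sigma(\xi)=\xi\sigma$ is invertible; since the invertible elements form an open subset of $B(\cE)$, there is an open $U\subset\GL_d(\R)$ on which $(T\xi)\sigma$ is invertible, and for $T\in U$ the first part applies with $\eta=T\xi$ to give $VR$ in direction $T\xi$. I expect the main obstacle to be purely bookkeeping: matching \eqref{eq:sigma2_VR} for $\fV^T$ with \eqref{eq:commutation_conditions}, \eqref{eq:additional_condition} for $\fV^{TP}$ under the index relabeling induced by $P$, and keeping the definition of the $VR$ conditions in a direction consistent across the coordinate changes; the substantive equivalence is supplied entirely by Proposition \ref{prop:coord_change}.
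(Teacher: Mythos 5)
Your proof is correct and takes essentially the same route as the paper: pick a coordinate change $T$ carrying $e_1,e_2$ to $\xi,\eta$, invoke Proposition \ref{prop:coord_change} for $\fV^T$, and use openness of the invertible operators for the final claim. The paper's own proof is exactly this in two lines; your additional bookkeeping --- the functoriality $(\fV^T)^S=\fV^{TS}$, the transposition relabeling matching \eqref{eq:sigma2_VR} with \eqref{eq:commutation_conditions}--\eqref{eq:additional_condition}, and the collinear case $\eta=c\xi$ --- just fills in details (and fixes a direction-of-$T$ slip) that the paper leaves implicit.
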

\begin{proof}
We can take $T \in \GL_d(\R)$, such that $T \xi = e_1$ and $T \eta = e_2$ and apply Proposition \ref{prop:coord_change}. To obtain the second part of the statement we note that since the invertible matrices are an open set, for $\eta \in \RR^d$, such that $\|\xi \sigma - \eta \sigma\|$ is small enough, we have that $\eta \sigma$ is invertible.
\end{proof}

This corollary allows us to treat $VR$ conditions without mentioning the direction. For definiteness we will assume for the rest of this section that the $VR$ conditions in direction $e_1$ are satisfied.

\begin{rem}
In case $\dim \cE < \infty$ one notes that the set $U$ from the above Corollary is in fact Zariski open and dense.
\end{rem}

The $VR$ conditions are slightly redundant as the following proposition shows:
\begin{prop} \label{prop:fourth_implies_third}
Assume that $\fV$ is a vessel that satisfies \eqref{eq:additional_condition}, then it satisfies the third condition of \eqref{eq:commutation_conditions} automatically.
\end{prop}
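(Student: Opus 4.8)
The plan is to exploit the selfadjointness of all the operators appearing in \eqref{eq:additional_condition}, together with the invertibility of $\sigma_1$. Fix $j,k \in \{2,\ldots,d\}$ and abbreviate $\alpha_j = \sigma_1^{-1}\sigma_j$, $\beta_j = \sigma_1^{-1}\gamma_{1j}$, so that the third condition of \eqref{eq:commutation_conditions} to be proved reads $[\alpha_k,\beta_j] = [\alpha_j,\beta_k]$. The idea is that \eqref{eq:additional_condition} already supplies one expression for $\gamma_{jk}$, and taking its adjoint supplies a second one; comparing them should produce exactly the desired commutator identity.

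First I would take the adjoint of \eqref{eq:additional_condition}. Since $\gamma_{jk}$, $\sigma_j$, $\sigma_k$, $\gamma_{1j}$, $\gamma_{1k}$ are selfadjoint and $\sigma_1^{-1}$ is selfadjoint (as $\sigma_1 = \sigma_1^*$ is invertible), applying $*$ to $\gamma_{jk} = \sigma_j\sigma_1^{-1}\gamma_{1k} - \sigma_k\sigma_1^{-1}\gamma_{1j}$ reverses the order of each product and yields
\[
\gamma_{jk} = \gamma_{1k}\sigma_1^{-1}\sigma_j - \gamma_{1j}\sigma_1^{-1}\sigma_k.
\]
Subtracting this identity from \eqref{eq:additional_condition} eliminates $\gamma_{jk}$ and leaves a relation purely among products of the $\sigma$'s and the $\gamma_{1\bullet}$'s.

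The second step is pure bookkeeping. Writing $\sigma_j = \sigma_1\alpha_j$ and $\gamma_{1k} = \sigma_1\beta_k$, every one of the four terms in the subtracted identity acquires a single factor of $\sigma_1$ on the left, so the relation collapses to
\[
\sigma_1\bigl([\alpha_j,\beta_k] - [\alpha_k,\beta_j]\bigr) = 0.
\]
Cancelling the invertible $\sigma_1$ gives $[\alpha_k,\beta_j] = [\alpha_j,\beta_k]$, which is the third condition of \eqref{eq:commutation_conditions}.

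I do not expect any genuine obstacle here: the only thing to be careful about is tracking which side each factor $\sigma_1^{-1}$ lands on when the adjoint reverses products. Conceptually the point is simply that selfadjointness of $\gamma_{jk}$ turns the $(j,k)$-antisymmetric defining relation \eqref{eq:additional_condition} into a symmetric commutator identity, so the third $VR$ condition is a formal consequence of the fourth; in particular neither the remaining vessel conditions \eqref{eq:vessel_conditions} nor the first two conditions of \eqref{eq:commutation_conditions} should be needed.
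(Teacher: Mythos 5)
Your proof is correct and follows essentially the same route as the paper's: take the adjoint of \eqref{eq:additional_condition} using the selfadjointness of all the operators involved, obtaining $\gamma_{jk} = \gamma_{1k}\sigma_1^{-1}\sigma_j - \gamma_{1j}\sigma_1^{-1}\sigma_k$, and subtract it from \eqref{eq:additional_condition} to produce the commutator identity. Your additional bookkeeping step of writing $\sigma_j = \sigma_1\alpha_j$, $\gamma_{1k} = \sigma_1\beta_k$ and cancelling the invertible $\sigma_1$ simply makes explicit what the paper leaves to the reader.
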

\begin{proof}
Since $\gamma_{jk}$ and $\sigma_j$ are selfadjoint we get by taking the adjoint of \eqref{eq:additional_condition} that:
\[
\gamma_{jk} = \gamma_{1k} \sigma_1^{-1} \sigma_j - \gamma_{1j} \sigma_1^{-1} \sigma_k.
\]
Now subtract it from \eqref{eq:additional_condition} to get the third equation of \eqref{eq:commutation_conditions}.
\end{proof}

The following is a strong converse to Proposition \ref{prop:fourth_implies_third} and provides a way to construct $VR$ vessels from partial data:

\begin{prop}
Assume that we are given a $d$-tuple of commuting non-selfadjoint operators $A_1,\ldots,A_d$ on $\cH$, an operator $\Phi \colon \cH \to \cE$ and a collection of selfadjoint operators $\sigma_1,\ldots,\sigma_d$ and $\gamma_{12},\ldots,\gamma_{1d}$ on $\cE$, such that $\sigma_1$ is invertible, the commutativity conditions \eqref{eq:commutation_conditions} hold and the relevant vessel conditions hold, namely for every $j = 1,\ldots,d$:
\[
A_j - A_j^* = i \Phi^* \sigma_j \Phi.
\]
\[
\gamma_{1j} \Phi = \sigma_1 \Phi A_j^* - \sigma_j \Phi A_1^*.
\]
Then there exists a $VR$ vessel $\fV$ with the above data.
\end{prop}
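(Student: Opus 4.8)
The plan is to \emph{define} the data missing from the hypotheses by the only formulas compatible with \eqref{eq:additional_condition} and the vessel conditions, and then to verify consistency. For all $j,k$ I would set
\[
\gamma_{jk} := \sigma_j \sigma_1^{-1} \gamma_{1k} - \sigma_k \sigma_1^{-1} \gamma_{1j},
\]
with the convention $\gamma_{11} = 0$; this reproduces the given $\gamma_{1j}$, is manifestly antisymmetric in $j,k$, and makes \eqref{eq:additional_condition} hold by fiat. I would then define the output operators through the linkage (fourth) equation of \eqref{eq:vessel_conditions},
\[
\gamma_{*jk} := \gamma_{jk} + i\left(\sigma_j \Phi \Phi^* \sigma_k - \sigma_k \Phi \Phi^* \sigma_j\right),
\]
so that the linkage condition, the antisymmetry $\gamma_{*jk} = -\gamma_{*kj}$, and (once the $\gamma_{jk}$ are shown selfadjoint) the selfadjointness of $\gamma_{*jk}$ are all immediate.

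It then remains to check selfadjointness of the $\gamma_{jk}$ together with the second and third equations of \eqref{eq:vessel_conditions}. For selfadjointness I would take the adjoint of the defining formula, using that $\sigma_j$, $\gamma_{1k}$, and $\sigma_1^{-1}$ are selfadjoint, and observe that $\gamma_{jk} = \gamma_{jk}^*$ reduces to $[\sigma_1^{-1}\sigma_j,\sigma_1^{-1}\gamma_{1k}] = [\sigma_1^{-1}\sigma_k,\sigma_1^{-1}\gamma_{1j}]$, which is the third equation of \eqref{eq:commutation_conditions}; this is exactly the computation of Proposition \ref{prop:fourth_implies_third} read backwards, and the third condition is among the hypotheses. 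For the input (second) vessel condition for a general pair I would substitute the given identities $\gamma_{1k}\Phi = \sigma_1 \Phi A_k^* - \sigma_k \Phi A_1^*$ and the analogue for $j$ into $\gamma_{jk}\Phi$; the terms carrying $\Phi A_1^*$ collect into $(\sigma_j \sigma_1^{-1}\sigma_k - \sigma_k \sigma_1^{-1}\sigma_j)\Phi A_1^*$, which vanishes by the first equation of \eqref{eq:commutation_conditions}, leaving precisely $\gamma_{jk}\Phi = \sigma_j \Phi A_k^* - \sigma_k \Phi A_j^*$.

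Finally, the output (third) vessel condition should follow by inserting the colligation condition \eqref{eq:colligation_condition} in the form $A_k = A_k^* + i\Phi^*\sigma_k\Phi$ into $\sigma_j \Phi A_k - \sigma_k \Phi A_j$, applying the input condition just established, and recognizing the correction term $i(\sigma_j \Phi \Phi^* \sigma_k - \sigma_k \Phi \Phi^* \sigma_j)\Phi$ as $(\gamma_{*jk} - \gamma_{jk})\Phi$ by the definition of $\gamma_{*jk}$. At that point all of \eqref{eq:colligation_condition} and \eqref{eq:vessel_conditions} hold, the commutation conditions \eqref{eq:commutation_conditions} hold by assumption, and \eqref{eq:additional_condition} holds by construction, so $\fV$ is a $VR$ vessel. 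I expect the only real work --- rather than a genuine obstacle --- to be this bookkeeping, the point being that selfadjointness of the extended $\gamma_{jk}$ and the extended input condition hinge on exactly the third and the first of the commutation conditions, so nothing beyond \eqref{eq:commutation_conditions} is required.
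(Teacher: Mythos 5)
Your proposal is correct and follows essentially the same route as the paper's proof: define $\gamma_{jk}$ by \eqref{eq:additional_condition}, deduce selfadjointness from the third commutation condition (as in Proposition \ref{prop:fourth_implies_third}), verify the input vessel condition using the first commutation condition, and define $\gamma_{*jk}$ via the linkage condition. The only difference is that you explicitly verify the output vessel condition and the selfadjointness of $\gamma_{*jk}$, steps the paper compresses into ``it is obvious that this vessel satisfies the $VR$ conditions.''
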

\begin{proof}
We define $\gamma_{jk}$ using \eqref{eq:additional_condition}, namely:
\[
\gamma_{jk} = \sigma_j \sigma_1^{-1} \gamma_{1k} - \sigma_k \sigma_1^{-1} \gamma_{1j}.
\]
Then using the same computation as in the preceding Proposition we see that $\gamma_{jk}$ is selfadjoint. Now to see that it satisfies the input vessel condition we check:
\begin{multline*}
\gamma_{jk} \Phi = \sigma_j \sigma_1^{-1} \gamma_{1k} \Phi - \sigma_k \sigma_1^{-1} \gamma_{1j} \Phi = \\ \sigma_j \sigma_1^{-1} \left( \sigma_1 \Phi A_k^* - \sigma_k \Phi A_1^*\right) - \sigma_k \sigma_1^{-1} \left( \sigma_1 \Phi A_j^* - \sigma_j \Phi A_1^*\right) = \sigma_j \Phi A_k^* - \sigma_k \Phi A_j^*.
\end{multline*}
The last equality follows from the first condition of \eqref{eq:commutation_conditions}. Now we define $\gamma_{*jk}$ using the linkage condition to get a vessel $\fV$. It is obvious that this vessel satisfies the $VR$ conditions.
\end{proof}

Recall that from Definition \ref{dfn:strict_vessel} a strict vessel is a vessel, such that $\Phi$ is surjective and $\cap_{j=1}^d \ker \sigma_j = 0$. Since we assume that $\sigma_1$ is invertible, the second condition holds automatically. The strict vessels are slightly easier to work with as the following claim shows:

\begin{prop} \label{prop:strict}
Assume that $\fV$ is a strict vessel that satisfies the first condition of \eqref{eq:commutation_conditions}, namely $\sigma_j \sigma_1^{-1} \sigma_k = \sigma_k \sigma_1^{-1} \sigma_j$. Then $\fV$ satisfies the $VR$ conditions.
\end{prop}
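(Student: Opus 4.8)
The plan is to establish the remaining $VR$ conditions --- the fourth relation \eqref{eq:additional_condition} together with the second and third relations of \eqref{eq:commutation_conditions} --- using strictness, which for us means precisely that $\Phi$ is surjective, so that any identity $X\Phi = Y\Phi$ between operators on $\cE$ forces $X = Y$ (right-cancellation of $\Phi$). Throughout I write $\alpha_j = \sigma_1^{-1}\sigma_j$ and $\beta_j = \sigma_1^{-1}\gamma_{1j}$; the hypothesis is the first condition $[\alpha_j,\alpha_k]=0$, equivalently $\sigma_j\sigma_1^{-1}\sigma_k = \sigma_k\sigma_1^{-1}\sigma_j$.

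First I would derive \eqref{eq:additional_condition}. Starting from the input vessel condition $\gamma_{1j}\Phi = \sigma_1\Phi A_j^* - \sigma_j\Phi A_1^*$ (the second line of \eqref{eq:vessel_conditions}), I compute $(\sigma_j\sigma_1^{-1}\gamma_{1k} - \sigma_k\sigma_1^{-1}\gamma_{1j})\Phi$ directly; the terms proportional to $\Phi A_1^*$ combine into $(\sigma_j\sigma_1^{-1}\sigma_k - \sigma_k\sigma_1^{-1}\sigma_j)\Phi A_1^*$, which vanishes by the first condition, leaving exactly $\sigma_j\Phi A_k^* - \sigma_k\Phi A_j^* = \gamma_{jk}\Phi$. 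Cancelling $\Phi$ on the right gives \eqref{eq:additional_condition}. With \eqref{eq:additional_condition} in hand, the third relation of \eqref{eq:commutation_conditions} is immediate from Proposition \ref{prop:fourth_implies_third}.

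The only substantive point, and the expected main obstacle, is the second relation $[\beta_j,\beta_k]=0$, since it is the one place where the commutativity $A_jA_k = A_kA_j$ must be invoked. Writing $B_j = A_j^*$ (so the $B_j$ commute), the input vessel condition rearranges to $\Phi B_j = \beta_j\Phi + \alpha_j\Phi B_1$. I would expand $\Phi B_jB_k$ by substituting this identity twice and, using $B_1B_k = B_kB_1$, reduce everything to terms in $\Phi$, $\Phi B_1$ and $\Phi B_1^2$. Equating the result with the expression obtained by interchanging $j$ and $k$ (legitimate because $B_jB_k = B_kB_j$), the $\Phi B_1^2$ terms cancel by the first condition, and one is left with
\[
[\beta_j,\beta_k]\Phi + \bigl([\beta_j,\alpha_k]+[\alpha_j,\beta_k]\bigr)\Phi B_1 = 0.
\]
The operator coefficient of $\Phi B_1$ equals $-[\alpha_k,\beta_j]+[\alpha_j,\beta_k]$, which vanishes by the third relation of \eqref{eq:commutation_conditions} just established. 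Hence $[\beta_j,\beta_k]\Phi = 0$, and surjectivity of $\Phi$ yields $[\beta_j,\beta_k]=0$, completing the second condition and thus the proof. The delicate part is the bookkeeping in the double substitution and verifying that, after applying commutativity of the $B_j$, the leftover coefficient is genuinely the third-condition commutator; every other step is forced by right-cancellation of the surjective $\Phi$.
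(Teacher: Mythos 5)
Your proof is correct, and for two of the three missing conditions it coincides with the paper's: the derivation of \eqref{eq:additional_condition} from the input vessel condition plus the first commutation condition is the same computation (the paper normalizes $\Phi\Phi^*=I_{\cE}$ and postmultiplies by $\Phi^*$ where you right-cancel the surjective $\Phi$ --- interchangeable steps, and your reduction of strictness to surjectivity of $\Phi$ is legitimate since $\sigma_1$ is assumed invertible), and both you and the paper then obtain the third condition from Proposition \ref{prop:fourth_implies_third}. Where you genuinely differ is the second condition $[\beta_j,\beta_k]=0$, the one place where commutativity of the $A_m$ enters. The paper computes the sandwich $\Phi^*\gamma_{1j}\sigma_1^{-1}\gamma_{1k}\Phi$: using the adjointed vessel condition and the colligation condition \eqref{eq:colligation_condition} it rewrites everything as cubic expressions in the $A_m$, $A_m^*$, cancels terms by commutativity, and after sandwiching back with $\Phi\,(\cdot)\,\Phi^*$ arrives at
\[
\gamma_{1j}\sigma_1^{-1}\gamma_{1k} - \gamma_{1k}\sigma_1^{-1}\gamma_{1j} = \Phi A_1\Phi^*\left(\sigma_j\sigma_1^{-1}\sigma_k - \sigma_k\sigma_1^{-1}\sigma_j\right)\Phi A_1^*\Phi^*,
\]
whose right-hand side vanishes under the hypothesis. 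You instead recast the vessel condition as the recursion $\Phi A_j^* = \beta_j\Phi + \alpha_j\Phi A_1^*$ and expand $\Phi A_j^*A_k^*$ in two orders, matching coefficients of $\Phi$, $\Phi A_1^*$, $\Phi A_1^{*2}$; the leftover coefficients are killed by the first and the already-established third conditions, and right-cancellation finishes. Your route is the operator-level analogue of the discrete recursion \eqref{eq:k-shift}, whose order-of-iteration compatibility is precisely what the $VR$ conditions encode, so it is arguably the more conceptual argument; it is also more economical, never invoking the colligation condition \eqref{eq:colligation_condition} or the normalization $\Phi\Phi^*=I_{\cE}$. What the paper's longer computation buys in exchange is the displayed identity itself (and its analogue for \eqref{eq:additional_condition}), which holds without assuming the first commutation condition and thus quantifies the failure of the second and fourth $VR$ conditions by the failure of the first, whereas your argument yields only the implication. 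The bookkeeping you flagged as the delicate point does check out: the coefficient of $\Phi A_1^{*2}$ is $[\alpha_j,\alpha_k]$, and the coefficient of $\Phi A_1^*$ is $[\beta_j,\alpha_k]+[\alpha_j,\beta_k]=[\alpha_j,\beta_k]-[\alpha_k,\beta_j]$, which is exactly the third-condition commutator.
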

\begin{proof}
Since the vessel is strict $\Phi$ is surjective and $\Phi^*$ is injective. We can assume without loss of generality that $\Phi \Phi^* = I_{\cE}$. Hence from the vessel conditions we get that:
\begin{multline*}
\sigma_j \sigma_1^{-1} \gamma_{1k} \Phi - \sigma_k \sigma_1^{-1} \gamma_{1j} \Phi - \gamma_{jk} \Phi = \\
 \sigma_j \sigma_1^{-1} \left( \sigma_1 \Phi A_k^* - \sigma_k \Phi A_1^*\right) - \sigma_k \sigma_1^{-1} \left( \sigma_1 \Phi A_j^* - \sigma_j \Phi A_1^*\right) - \sigma_j \Phi A_k^* + \sigma_k \Phi A_j^* = \\
  \left( \sigma_j \sigma_1^{-1} \sigma_k - \sigma_k \sigma_1^{-1} \sigma_j \right) \Phi A_1^*.
\end{multline*}
Now postmultiplying by $\Phi^*$ we obtain:
\[
\sigma_j \sigma_1^{-1} \gamma_{1k} - \sigma_k \sigma_1^{-1} \gamma_{1j} - \gamma_{jk} = \left( \sigma_j \sigma_1^{-1} \sigma_k - \sigma_k \sigma_1^{-1} \sigma_j \right) \Phi A_1^* \Phi^*.
\]
In particular \eqref{eq:additional_condition} follows from the first condition of \eqref{eq:commutation_conditions}. Next we compute:
\begin{multline*}
\Phi^* \gamma_{1j} \sigma_1^{-1} \gamma_{1k} \Phi = (A_j \Phi^* \sigma_1 - A_1 \Phi^* \sigma_j) \sigma_1^{-1} (\sigma_1 \Phi A_k^* - \sigma_k \Phi A_1^*) = \\ A_j \Phi^* \sigma_1 \Phi A_k^* - A_j \Phi^* \sigma_k \Phi A_1^*  - A_1 \Phi^* \sigma_j \Phi A_k^* + A_1 \Phi^* \sigma_j \sigma_1^{-1} \sigma_k \Phi A_1^* = \\ \frac{1}{i} (A_j A_1 A_k^* - A_j A_1^* A_k^*) - \frac{1}{i} (A_j A_k A_1^* - A_j A_k^* A_1^*) - \frac{1}{i} (A_1 A_j A_k^* - A_1 A_j^* A_k^*) + \\ A_1 \Phi^* \sigma_j \sigma_1^{-1} \sigma_k \Phi A_1^* = \frac{1}{i} (A_1 A_j^* A_k^* - A_j A_k A_1^*) + A_1 \Phi^* \sigma_j \sigma_1^{-1} \sigma_k \Phi A_1^*.
\end{multline*} 
Similarly we get:
\[
\Phi^* \gamma_{1k} \sigma_1^{-1} \gamma_{1j} \Phi = \frac{1}{i} (A_1 A_k^* A_j^* - A_k A_j A_1^*) + A_1 \Phi^* \sigma_k \sigma_1^{-1} \sigma_j \Phi A_1^*
\]
Now since the $A_j$ commute we obtain after premultiplying by $\Phi$ and postmultiplying by $\Phi^*$ that:
\[
\gamma_{1j} \sigma_1^{-1} \gamma_{1k} - \gamma_{1k} \sigma_1^{-1} \gamma_{1j} = \Phi A_1 \Phi^* \left( \sigma_j \sigma_1^{-1} \sigma_k - \sigma_k \sigma_1^{-1} \sigma_j \right) \Phi A_1^* \Phi^*.
\]
In particular the second condition of \eqref{eq:commutation_conditions} follows from the first. Now using Proposition \ref{prop:fourth_implies_third} we get the result.
\end{proof}

\begin{cor} \label{cor:doubly_commuting}
Assume that $A_1,\ldots,A_d$ are doubly commuting (i.e., $[A_j,A_k^*] = 0$ for every $j \neq k$) and that there exists $\xi \in \R^d$, such that $\sum_{j=1}^d \xi_j (A_j - A_j^*)$ is invertible when restricted to the non-Hermitian subspace, then the strict vessel they embed into satisfies the $VR$ conditions.
\end{cor}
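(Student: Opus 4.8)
The plan is to embed $A_1,\ldots,A_d$ into the strict vessel $\fV$ furnished by \eqref{eq:strict_vessel} and to reduce the whole collection of $VR$ conditions to the single first commutation condition of \eqref{eq:commutation_conditions}, via Proposition \ref{prop:strict}. The hypothesis that $\sum_{j=1}^d \xi_j(A_j - A_j^*)$ is invertible on the non-Hermitian subspace $\cE$ says exactly that $\sigma_\xi := \sum_{j=1}^d \xi_j \sigma_j$ is invertible, since $\sigma_j = \tfrac{1}{i}(A_j - A_j^*)|_{\cE}$. Thus once the first commutation condition is known to hold in the direction $\xi$, Proposition \ref{prop:strict} will yield the remaining $VR$ conditions, and Corollary \ref{cor:generic_change} will promote them to a generic direction.

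The heart of the matter is that double commutativity forces the $\sigma_j$ to commute pairwise. For $j \neq k$, expanding gives
\[
[A_j - A_j^*,\, A_k - A_k^*] = [A_j,A_k] - [A_j,A_k^*] - [A_j^*,A_k] + [A_j^*,A_k^*],
\]
where $[A_j,A_k] = 0$ and $[A_j^*,A_k^*]=0$ by commutativity (the second being the adjoint of the first), while $[A_j,A_k^*]=0$ is the double commutativity hypothesis and $[A_j^*,A_k]=0$ is its adjoint. Hence $[A_j - A_j^*, A_k - A_k^*] = 0$, so $[\sigma_j,\sigma_k]=0$ as operators on $\cH$. Since $\im(A_j - A_j^*) \subseteq \cE$, the subspace $\cE$ is invariant under each $A_j - A_j^*$, so the restriction to $\cE$ is compatible with composition and the identity $[\sigma_j,\sigma_k]=0$ descends to the restricted operators on $\cE$.

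To address the direction, I would choose $T \in \GL_d(\R)$ with $T e_1 = \xi$ and pass to the coordinate-changed vessel $\fV^T$, whose first entry is $\sigma_1^T = \sigma(\xi) = \sigma_\xi$, invertible by hypothesis. The vessel $\fV^T$ is still strict: $\Phi = P_{\cE}$ is unchanged and remains surjective, while $\ker \sigma_1^T = 0$ already gives $\cap_j \ker \sigma_j^T = 0$. Now each $\sigma_j^T = \sigma(Te_j)$ is a real linear combination of the $\sigma_i$, and linear combinations of a commuting family commute; therefore the $\sigma_j^T$ commute pairwise. In particular $(\sigma_1^T)^{-1}$ commutes with every $\sigma_k^T$, so $\sigma_j^T (\sigma_1^T)^{-1} \sigma_k^T = (\sigma_1^T)^{-1}\sigma_j^T\sigma_k^T = (\sigma_1^T)^{-1}\sigma_k^T\sigma_j^T = \sigma_k^T (\sigma_1^T)^{-1}\sigma_j^T$, which is precisely the first condition of \eqref{eq:commutation_conditions} for $\fV^T$. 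Proposition \ref{prop:strict} then shows $\fV^T$ satisfies the $VR$ conditions in the direction $e_1$, i.e.\ $\fV$ satisfies them in the direction $\xi$.

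The step I expect to be the main obstacle, or at least the point requiring care, is the coordinate change: double commutativity of the $A_j$ is \emph{not} preserved by a real coordinate change, since $[A_j^T,(A_k^T)^*]$ acquires diagonal contributions $\sum_i T_{ij}T_{ik}[A_i,A_i^*]$ that need not vanish for $j \neq k$. The resolution is to transfer not the double commutativity of the $A_j$ but the commutativity of the $\sigma_j$, which \emph{is} stable under forming real linear combinations; this is what lets me invoke Proposition \ref{prop:strict} in the rotated frame without re-establishing any structural hypothesis on the operators themselves.
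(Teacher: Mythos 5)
Your proposal is correct and takes essentially the same route as the paper: double commutativity forces the $\sigma_j$ of the strict vessel \eqref{eq:strict_vessel} to commute, the hypothesis makes $\xi \sigma$ invertible, and Proposition \ref{prop:strict} (applied in the direction $\xi$) gives the $VR$ conditions. The paper leaves the coordinate-change bookkeeping implicit; your explicit check that $\fV^T$ remains strict and inherits the pairwise commutation of the $\sigma_j^T$ is precisely the detail the paper suppresses.
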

\begin{proof}
From the formulae \eqref{eq:strict_vessel} it follows that the $\sigma_j$ commute and our assumption implies that there exists $\xi \in \R^d$, such that $\xi \sigma$ is invertible. Now apply Proposition \ref{prop:strict} to get the result. 
\end{proof}

\begin{rem}
Let $A_1,\ldots,A_d$ be a $d$-tuple of commuting operators, then Proposition \ref{prop:strict} and Corollary \ref{cor:doubly_commuting} imply that the assumption that the strict vessel satisfies the $VR$ conditions is a generalization of the doubly-commuting property.
\end{rem}

\section{Solution in the Hyperbolic Case} \label{sec:cont_solve}

In this section we study the hyperbolic case and thus from now on we assume that there exists an $\epsilon > 0$, such that $\sigma_1 > \epsilon I$. Let us write again $\alpha_j = \sigma_1^{-1} \sigma_j$ and $\beta_j = \sigma_1^{-1} \gamma_{1j}$ and set $\alpha_1 = I_{\cE}$ and $\beta_1 = 0$. In this case $\alpha_j$ and $\beta_j$ are selfadjoint with respect to the $\sigma_1$-inner product on $\cE$. Without loss of generality we may assume that $\sigma_1 = I_{\cE}$, since otherwise we can simply replace the inner product on $\cE$ by the $\sigma_1$-inner product and then the $\sigma_j$ will be replaced by $\alpha_j$ and $\gamma_{1j}$ will be replaced by $\beta_j$. For $x \in \R^d$ let us write $\alpha(x) = \sum_{j=1}^d x_j \alpha_j$ and similarly $\beta(x) = \sum_{j=1}^d x_j \beta_j$. Since $\alpha_1 = I$ there exists $\epsilon_j > 0$ small enough such that $\alpha_1 + \epsilon_j \alpha_j > \delta I$ for every $j = 2,\ldots,d$ and some $\delta > 0$. Hence by changing coordinates we may assume that $\alpha_j > \delta I$ for every $j = 2,\ldots,d$. The following definition describes the future cone of our system:
\begin{dfn} \label{dfn:positive_cone}
Let $\fV$ be a vessel, define the following set in $\R^d$:
\[
\Pos(\fV) = \{ \xi \in \R^d \mid \exists \epsilon > 0: \, \xi \sigma = \sum_{j=1}^d \xi_j \sigma_j > \epsilon I_{\cE} \}.
\]
Note that $\Pos(\fV)$ is either empty or an open convex cone in $\R^d$. 
\end{dfn}

Recall that by a theorem of Grothendieck a function with values in $\cE$ is smooth if and only if it is weakly smooth (cf. \cite[Sec.\ 3.8]{Gro73} or \cite{Gro53}). Denote by $\schwartz$ the Schwarz space of $\cE$-valued rapidly decreasing smooth functions on $\R$. Namely, $\schwartz$ is the space of smooth $\cE$-valued functions, such that for every two polynomials $P$ and $Q$ we have that $\|P(t) Q(\pd{}{t}) f\|$ is bounded on $\R$. By \cite[Thm.\ 44.1]{Tre67} and \cite[Ex.\ 44.6]{Tre67} we have that $\schwartz \cong \cS(\R) \widehat{\otimes} \cE$, where the choice of the completed tensor product does not matter since $\cS(\R)$ is a nuclear Frechet space (cf. \cite[Ch.\ 51]{Tre67}). We also consider the space of tempered $\cE$-distributions on $\R$, namely the topological dual of $\schwartz$, that we will denote by $\tempdist$. Since our goal is to discuss operators on Hilbert spaces we will use an anti-linear pairing between tempered distributions and Schwarz functions. We note that we can endow $\tempdist$ with the strong topology of uniform convergence on bounded subsets and that by \cite[Prop.\ 50.7]{Tre67} we have $\tempdist \cong \cS^{\prime}(\R) \widehat{\otimes} \cE$ (the identification is again anti-linear). We can also endow $\tempdist$ with the weak topology of pointwise convergence and those topologies will coincide if and only if $\dim \cE < \infty$. Similarly we define $\schwartzd$ and $\tempdistd$. We will also use the space $\Ltwoe$ that is the space of all weakly measurable functions $f \colon \R \to \cE$ (this is equivalent by Pettis' theorem to strongly measurable since $\cE$ is separable), such that:
\[
\|f\|_{L^2}^2 = \int_{-\infty}^{\infty} \|f(t)\|^2 dt < \infty.
\]
Similarly, if $\alpha$ is an invertible positive-definite operator on $\cE$ we will define the space $L^2(\R,\cE,\alpha)$ as the set of all weakly measurable functions $f \colon \R \to \cE$, such that:
\[
\int_{-\infty}^{\infty} \langle \alpha f(t), f(t) \rangle dt < \infty.
\]
Then in particular we have that $\Ltwoe \cong L^2(\R) \widehat{\otimes}_H \cE$, where $\widehat{\otimes}_H$ is the tensor product of Hilbert spaces. We have a continuous embedding $\schwartz \hookrightarrow \Ltwoe$ and its image is dense. We can define the Fourier transform by considering the continuous linear map $\cF \otimes I_{\cE}$ on $\cS(\R) \widehat{\otimes} \cE$. This is equivalent to:
\[
\cF(f)(t) = \frac{1}{\sqrt{2 \pi}} \int_{-\infty}^{\infty} e^{- i s t} f(s) ds.
\]
Here the integral is considered as a Gel'fand-Pettis integral and by the same consideration as in the classical Plancherel theorem it extends to an isometric automorphism of $\Ltwoe$.

Let us assume now that the input signal is a Schwarz $\cE$-valued function on $\R^d$ and assume that the initial condition is $u = f$ on the $t_1$-axis. Let us then apply the Fourier transform along the $t_1$-axis to $u$ and write $\cF_1(u) = \widehat{u}$. Then we get a system of equations ($j = 2,\ldots,d$):
\[
\pd{\widehat{u}}{t_j} = i \alpha_j \tau_1 \widehat{u} + i \beta_j \widehat{u}.
\]
Here $\tau_1$ is the variable in the frequency domain. The initial condition is $\widehat{u} = \widehat{f}$ on the $\tau_1$-axis. Each of these equations has a solution of the form: 
\[
\varphi_j (\tau_1,t_2,\ldots,t_d) = e^{i t_j \left( \alpha_j \tau_1 + \beta_j \right)} C_j(\tau_1,t_2,\ldots, t_{j-1},t_{j+1}, \ldots, t_d),
\]
where $C_j$ is an $\cE$-valued function. One then can proceed plugging one solution into the other equations and then using the initial condition. Since the equations are compatible and the pencils in the exponent commute by \eqref{eq:commutation_conditions}, we will get a solution:
\[
\widehat{u} = e^{\sum_{j=2}^d i t_j \left( \alpha_j \tau_1 + \beta_j\right)} \widehat{f}(\tau_1).
\]
Hence a solution to the system of input compatibility equations is:
\begin{equation} \label{eq:fourier_solution}
u(t_1,\ldots,t_d) = \frac{1}{\sqrt{2 \pi}} \int_{-\infty}^{\infty} e^{\sum_{j=1}^d i t_j \left( \alpha_j \tau_1 + \beta_j\right)} \widehat{f}(\tau_1) d\tau_1.
\end{equation}
This computation makes sense for Schwarz functions. We will show below (see Corollary \ref{lem:sch_maps_to_smooth}) that for every $f \in \schwartz$ the above formula defines a smooth $\cE$-valued function on $\R^d$ (not necessarily Schwarz) that is a solution of our system. Next we would like to extend it to a wider class of functions on $\R$.

Note that $\R^d$ acts on $\schwartz$ by $(t e_j \cdot \varphi)(s) = e^{i t (s \alpha_j + \beta_j)} \varphi(s)$. This function is clearly Schwarz since both $\alpha_j$ and $\beta_j$ are selfadjoint and hence the exponent is a unitary operator on $\cE$. We can conjugate this action by the Fourier transform, namely we get a representation of $\R^d$:
\begin{equation} \label{eq:pi}
\pi(t e_j) \varphi = \cF^{-1}(e^{i t (s \alpha_j + \beta_j)} \cF(\varphi)).
\end{equation}
Note that this representation is smooth by virtue of a theorem of Bruhat (\cite[Prop.\ 4.4.1.7]{War72} and the following fact:
\[
\pi(\pd{}{t_1}) \varphi = \varphi^{\prime}.
\]
\[
\pi(\pd{}{t_j}) \varphi = \alpha_j \varphi^{\prime} + i \beta_j \varphi.
\]
By \cite[Prop.\ 4.4.1.9]{War72} we have that the contragredient representation of $\R^d$ on $\tempdist$ is also smooth, since $\tempdist$ is complete. Hence we get for every $f \in \tempdist$ a smooth function $L_f \colon \R^{d-1} \to \tempdist$ that is evaluated as:
\[
\langle L_f(t_2,\ldots,t_d)(\cdot),\varphi \rangle = \langle f , \cF(e^{- i \sum_{j=2}^d t_j (s \alpha_j + \beta_j)} \cF^{-1}(\varphi)) \rangle.
\]
Note that if $u \in \cS(\R^d,\cE)$, that solves the system of input compatibility conditions and $f$ is its restriction to the $t_1$-axis, then \eqref{eq:fourier_solution} implies that $u(\cdot,t_2,\ldots,t_d) = L_f(t_2,\ldots,t_d)(\cdot)$. Note also, that in general the function $L_f$ solves the input compatibility conditions in the following sense:
\[
\pi(\pd{}{t_j})L_f = \alpha_j L_{f^{\prime}} + i \beta_j L_f = (\alpha_j \pi(\pd{}{t_1}) + i \beta_j) L_f.
\]

\begin{rem}
Recall that equation \eqref{eq:additional_condition} implies that the remaining equations are satisfied, if those involving $\sigma_1$ are satisfied. 
\end{rem}

\begin{prop} \label{prop:dist_on_R^d}
The function $L_f$ defines a tempered $\cE$-valued distribution $\fu_f$ on $\R^d$ as follows: 
\begin{equation} \label{eq:u_as_dist}
\langle \fu_f, \psi \rangle = \int_{\R^{d-1}} \langle L_f(t_2,\ldots,t_d)(\cdot), \psi(\cdot, t_2,\ldots,t_d) \rangle dt_2 \cdots dt_d.
\end{equation}
Here $\psi \in \schwartzd$ is a Schwarz function.
\end{prop}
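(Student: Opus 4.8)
The plan is to verify directly that the right-hand side of \eqref{eq:u_as_dist} converges absolutely for every $\psi \in \schwartzd$ and that the resulting linear functional is continuous for the Schwarz topology, which is exactly the assertion $\fu_f \in \tempdistd$. To organise the estimate I would first introduce, for fixed $t' = (t_2,\ldots,t_d) \in \R^{d-1}$, the operator $T(t') \colon \schwartz \to \schwartz$ given by
\[
T(t')\varphi = \cF\!\left( e^{-i \sum_{j=2}^d t_j(s\alpha_j + \beta_j)} \cF^{-1}(\varphi) \right),
\]
so that the defining formula for $L_f$ reads $\langle L_f(t')(\cdot), \varphi \rangle = \langle f, T(t')\varphi \rangle$. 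For $\psi \in \schwartzd$ each section $\psi(\cdot, t')$ lies in $\schwartz$, and the integrand of \eqref{eq:u_as_dist} is precisely $\langle f, T(t')\psi(\cdot, t') \rangle$.

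Before integrating I would check that this integrand is continuous in $t'$, hence measurable: the smoothness of the contragredient representation recalled just above shows that $t' \mapsto L_f(t')$ is smooth into $\tempdist$, while $t' \mapsto \psi(\cdot, t')$ is continuous into $\schwartz$, so their pairing is a continuous function of $t'$. The real content of the proposition is therefore the absolute convergence together with the continuity estimate, and both will follow from a single bound.

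The heart of the argument is to control the Schwarz seminorms of $T(t')\varphi$ polynomially in $t'$. Write $p_N(\varphi) = \sup_{a,b \le N}\sup_{s \in \R}(1+|s|)^a \|\partial_s^b \varphi(s)\|$ for the usual seminorms generating the topology of $\schwartz$, and fix $C > 0$, $N \in \N$ with $|\langle f, \varphi\rangle| \le C\, p_N(\varphi)$, available since $f \in \tempdist$. Set $\alpha(t') = \sum_{j=2}^d t_j \alpha_j$ and $\beta(t') = \sum_{j=2}^d t_j\beta_j$; the multiplier $m_{t'}(s) = e^{-i(s\alpha(t') + \beta(t'))}$ is unitary for every $s$, and although $\alpha(t')$ and $\beta(t')$ need not commute, Duhamel's formula together with unitarity gives $\|\partial_s^k m_{t'}(s)\| \le C_k(1+|t'|)^k$ uniformly in $s$. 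Since $\cF$ is a topological automorphism of $\schwartz$ that exchanges multiplication by $s$ with $\partial_s$, the Leibniz rule then produces $M$ and $N'$, depending only on $N$ and $d$, and a constant $C'$ with
\[
p_N\big(T(t')\varphi\big) \le C'(1+|t'|)^M\, p_{N'}(\varphi), \qquad \varphi \in \schwartz.
\]
I expect this polynomial-in-$t'$ control of the non-commuting exponential multiplier to be the main technical obstacle; everything else is formal.

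Granting the bound, I would finish as follows. Taking $\varphi = \psi(\cdot, t')$ yields $|\langle f, T(t')\psi(\cdot, t')\rangle| \le CC'(1+|t'|)^M\, p_{N'}(\psi(\cdot, t'))$. Because $\psi$ is Schwarz in all $d$ variables jointly, the quantity $(1+|t'|)^{M+d}\,p_{N'}(\psi(\cdot, t'))$ is bounded by a single Schwarz seminorm $q(\psi)$ of $\psi$ on $\R^d$, so the integrand is dominated by $CC'\,q(\psi)\,(1+|t'|)^{-d}$. As $(1+|t'|)^{-d}$ is integrable over $\R^{d-1}$, the integral in \eqref{eq:u_as_dist} converges absolutely and satisfies $|\langle \fu_f, \psi\rangle| \le C''\, q(\psi)$, which is exactly the continuity required; hence $\fu_f \in \tempdistd$.
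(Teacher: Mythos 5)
Your proposal is correct and follows essentially the same route as the paper's proof: both use the temperedness of $f$ to reduce the problem to Schwarz-seminorm bounds on $\cF\bigl(e^{-i\sum_{j\geq 2} t_j(s\alpha_j+\beta_j)}\cF^{-1}(\psi(\cdot,t'))\bigr)$, control the non-commuting exponential multiplier via Duhamel's formula plus unitarity to get polynomial growth in $t'$, exchange multiplication and differentiation through $\cF$, and finally absorb the polynomial factor into a joint Schwarz seminorm of $\psi$ so that an integrable weight $(1+|t'|)^{-d}$ remains over $\R^{d-1}$. Your seminorm formulation of the key estimate and the explicit measurability remark are only stylistic refinements of the paper's concrete computation.
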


\begin{proof}
We note that since $f \in \tempdist$ there exist polynomials $P(t)$ and $Q(t)$ and a constant $C > 0$, such that for every choice of $t_2,\ldots,t_d$ and $\psi$, we have:
\begin{multline*}
|\langle L_f(t_2,\ldots,t_d)(\cdot), \psi(\cdot, t_2,\ldots,t_d) \rangle| = \\ | \langle f , \cF(e^{-i \sum_{j=2}^d t_j (s \alpha_j + \beta_j)} \cF^{-1}(\psi(\cdot,t_2,\ldots,t_d))) \rangle |\leq \\ C \sup_{t_1 \in \R} \|P(t_1) Q(\pd{}{t_1}) \cF(e^{-i \sum_{j=2}^d t_j (s \alpha_j + \beta_j)} \cF^{-1}(\psi(t_1,t_2,\ldots,t_d)))\| = \\
C \sup_{t_1 \in \R} \| \cF( P(i \pd{}{s}) Q(-i s)e^{-i \sum_{j=2}^d t_j (s \alpha_j + \beta_j)} \cF^{-1}(\psi(t_1,t_2,\ldots,t_d)))\| = \\
C \sup_{t_1 \in \R} \| \cF( P(i \pd{}{s}) e^{-i \sum_{j=2}^d t_j (s \alpha_j + \beta_j)} \cF^{-1}(Q(\pd{}{t_1}) \psi(t_1,t_2,\ldots,t_d)))\| \leq \\
C C^{\prime} \sup_{s \in \R} \| P(i \pd{}{s}) e^{-i \sum_{j=2}^d t_j (s \alpha_j + \beta_j)} \cF^{-1}((1 + \pd{}{t_1^2})Q(\pd{}{t_1}) \psi(t_1,t_2,\ldots,t_d)))\|.
\end{multline*}
The last inequality is due to the fact that for every function $\varphi \in \schwartz$ we have that:
\begin{equation} \label{eq:fourier_estimate}
\sup_{s\in\R}\| \cF(\varphi)(s)\| \leq C^{\prime} \sup_{s \in \R} \| (1 + s^2) \varphi(s)\|.
\end{equation}
For the derivative of the exponent applied to a Schwarz function we have the following bound (here we write $\eta = \cF^{-1}((1 + \pd{}{t_1^2})Q(\pd{}{t_1}) \psi(t_1,t_2,\ldots,t_d))$):
\[
\|\pd{}{s} \left(e^{-i \sum_{j=2}^d t_j (s \alpha_j + \beta_j)} \eta(s)\right)\| \leq \| \pd{}{s} \left(e^{-i \sum_{j=2}^d t_j (s \alpha_j + \beta_j)} \right) \eta(s) \| + \|\pd{\eta}{s} \| \]
We use here the fact that the exponent is a unitary operator for every choice of real $s$. Now we have the following well known equality:
\[
\pd{}{s} \left(e^{-i \sum_{j=2}^d t_j (s \alpha_j + \beta_j)} \right) = - \int_0^1 e^{-i w \sum_{j=2}^d t_j (s \alpha_j + \beta_j)} \left( \sum_{j=2}^d t_j \alpha_j \right) e^{-(1-w) i\sum_{j=2}^d t_j (s \alpha_j + \beta_j)} dw. 
\]
Thus we obtain the inequality:
\[
\|\pd{}{s} \left(e^{- i \sum_{j=2}^d t_j (s \alpha_j + \beta_j)} \eta(s)\right)\| \leq \sum_{j=2}^d |t_j| \| \alpha_j\| \| \eta(s)\| + \| \pd{\eta}{s} \|.
\]
Notice that in the expression $\pd{\eta}{s}$ we can push $\pd{}{s}$ into $\cF^{-1}$ replacing it by $-i t_1$. Using estimate \eqref{eq:fourier_estimate} we can then get rid of the Fourier transform. Applying these considerations to every monomial in $P$ we eventually get that there exist a constant $\tilde{C} > 0$ and polynomials $\tilde{P}, \tilde{Q} \in \C[t_1,\ldots,t_d]$, such that:
\[
|\langle L_f(t_2,\ldots,t_d)(\cdot), \psi(\cdot, t_2,\ldots,t_d) \rangle| \leq \tilde{C} \sup_{t_1 \in \R} \| \tilde{P}(t) \tilde{Q}(\pd{}{t}) \psi(t)\|.
\]
As for the integral we write:
\begin{multline*}
\left|\int_{\R^{d-1}} \langle L_f(t_2,\ldots,t_d)(\cdot), \psi(\cdot, t_2,\ldots,t_d) \rangle dt_2\cdots dt_d \right| \leq \\ C_0 \sup_{t = (t_1,\ldots,t_d) \in \R^d)} \|(1 + t_2^2 + \ldots + t_d^2)^d \tilde{P}(t) \tilde{Q}(\pd{}{t}) \psi(t)\|.
\end{multline*}
Here $C_0 = \tilde{C} \int_{\R^{d-1}} (1 + t_2^2 + \ldots + t_d^2)^{-d} dt_2\cdots t_d$.
\end{proof}

\begin{prop} \label{prop:weak_solution}
The tempered distribution $\fu_f$ defined in \eqref{eq:u_as_dist} is a weak solution for the system of input compatibility conditions.
\end{prop}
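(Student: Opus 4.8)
The plan is to verify the defining weak form of each strict input compatibility equation directly against test functions $\psi \in \schwartzd$, reducing everything to the infinitesimal relation for $L_f$ that was recorded just before the statement. Recall that $\fu_f$ being a weak solution means that $\fu_f$, viewed as an element of $\tempdistd$, annihilates the formal transpose of each operator $\sigma_k\pd{}{t_j}-\sigma_j\pd{}{t_k}+i\gamma_{jk}$ applied to test functions.

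First I would invoke the remark preceding the proposition: since the vessel is \vr{}, equation \eqref{eq:additional_condition} expresses every $\gamma_{jk}$ as $\sigma_j\sigma_1^{-1}\gamma_{1k}-\sigma_k\sigma_1^{-1}\gamma_{1j}$, and the very same constant-coefficient linear combination that derives the strict equation \eqref{eq:input_compat} for a general pair $(j,k)$ from the pairs $(1,j)$ and $(1,k)$ carries over verbatim to distributions: it only involves the constant operators $\sigma_\ell,\gamma_{\ell m}$ and the mutually commuting partial derivatives, all of which act continuously on $\tempdistd$. Hence it suffices to show that $\fu_f$ weakly satisfies the $d-1$ equations involving $\sigma_1$, that is, with $\sigma_1=I$, that for $j=2,\ldots,d$ one has $\pd{\fu_f}{t_j}=\alpha_j\pd{\fu_f}{t_1}+i\beta_j\fu_f$ in $\tempdistd$.

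Next I would unwind the two kinds of distributional derivative of $\fu_f$ appearing here in terms of the smooth $\tempdist$-valued function $L_f$. For $j\geq 2$ the derivative $\pd{}{t_j}$ acts on the external integration variables in \eqref{eq:u_as_dist}; using the smoothness of $L_f$ guaranteed by Bruhat's theorem together with the rapid-decay estimates established in the proof of Proposition \ref{prop:dist_on_R^d}, I can differentiate under the integral and integrate by parts in $t_j$ (the boundary contributions vanish by rapid decay), and since the ordinary $t_j$-derivative of $L_f$ is by construction the generator $\pi(\pd{}{t_j})L_f$, I obtain
\[
\left\langle \pd{\fu_f}{t_j}, \psi\right\rangle = \int_{\R^{d-1}} \left\langle \pi\left(\pd{}{t_j}\right)L_f(t_2,\ldots,t_d)(\cdot),\, \psi(\cdot,t_2,\ldots,t_d)\right\rangle dt_2\cdots dt_d.
\]
For $j=1$ the derivative instead acts inside the $t_1$-slot on which $L_f(t_2,\ldots,t_d)$ operates, so by the definition of the distributional derivative on $\R$ it transfers onto $\pi(\pd{}{t_1})L_f$, yielding the analogous formula; the term $i\beta_j\fu_f$ passes through the integral since $\beta_j$ is a constant selfadjoint operator.

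Finally, combining these identities, the pairing of $\pd{\fu_f}{t_j}-\alpha_j\pd{\fu_f}{t_1}-i\beta_j\fu_f$ with an arbitrary $\psi$ equals
\[
\int_{\R^{d-1}} \left\langle \left(\pi\left(\pd{}{t_j}\right)-\alpha_j\pi\left(\pd{}{t_1}\right)-i\beta_j\right)L_f(t_2,\ldots,t_d)(\cdot),\, \psi(\cdot,t_2,\ldots,t_d)\right\rangle dt_2\cdots dt_d,
\]
which vanishes because the integrand is identically zero: this is precisely the relation $\pi(\pd{}{t_j})L_f=(\alpha_j\pi(\pd{}{t_1})+i\beta_j)L_f$ recorded before the proposition. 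I expect the main obstacle to be the rigorous justification of the interchange of the $\R^d$ distributional derivative with the mixed internal/external structure of $\fu_f$, namely differentiating under the integral and integrating by parts in the variables $t_2,\ldots,t_d$ (which rests on the smoothness of $L_f$ and the estimates of Proposition \ref{prop:dist_on_R^d}) and consistently tracking signs through the anti-linear pairing and through the transfer of the $t_1$-derivative onto $\pi(\pd{}{t_1})$; once this bookkeeping is in place the conclusion is immediate.
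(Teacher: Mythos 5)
Your proposal is correct and takes essentially the same route as the paper's proof: the paper likewise reduces (via the remark on \eqref{eq:additional_condition}) to the equations involving $\sigma_1$, pairs $\fu_f$ against test functions, and shows the difference $\langle \fu_f, \pd{\psi}{t_k}\rangle - \langle \fu_f, \alpha_k \pd{\psi}{t_1} + i\beta_k \psi\rangle$ is the integral over $\R^{d-1}$ of a total $t_k$-derivative, which vanishes by rapid decay. The relation $\pi(\pd{}{t_j})L_f = (\alpha_j\pi(\pd{}{t_1})+i\beta_j)L_f$ that you invoke as a black box is exactly what the paper re-derives in-line by the Leibniz computation on the exponential kernel, so the two arguments differ only in bookkeeping.
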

\begin{proof}
We want to show that for every $f \in \tempdist$ and every $\psi \in \schwartz$, we have:
\[
\langle \pd{\fu_f}{t_k}, \psi \rangle = \langle \alpha_k \pd{\fu_f}{t_1} + i \beta_k \fu_f, \psi \rangle.
\]
Since $\alpha_k$ and $\beta_k$ are selfadjoint, by taking the adjoint we see that the desired equality becomes:
\[
\langle \fu_f, \pd{\psi}{t_k} \rangle = \langle u, \alpha_k \pd{\psi}{t_1} + i \beta_k \psi \rangle.
\]

For $\psi \in \schwartzd$, we want to compute $\langle \fu_f, \pd{}{t_k} \psi \rangle$, for some $k = 2, \ldots,d$. For fixed $t_2,\ldots,t_d \in \R^d$ we have:
\[
\langle L_f(t_2,\ldots,t_d)(\cdot),\pd{}{t_k} \psi \rangle = \langle f , \cF( e^{-i \sum_{j=2}^d t_j (s\alpha_j + \beta_j)} \cF^{-1} (\pd{}{t_k} \psi)) \rangle.
\]
Now we compute:
\begin{multline*}
\pd{}{t_k} \left( \cF( e^{-i \sum_{j=2}^d t_j (s\alpha_j + \beta_j)} \cF^{-1} (\psi) \right) = \\ \cF\left( \pd{}{t_k} \left( e^{-i \sum_{j=2}^d t_j (s\alpha_j + \beta_j)} \right) \cF^{-1} (\psi) \right)  + \cF( e^{-i \sum_{j=2}^d t_j (s\alpha_j + \beta_j)} \cF^{-1} (\pd{\psi}{t_k})) = \\- \cF( e^{-i \sum_{j=2}^d t_j (s\alpha_j + \beta_j)} (i s \alpha_k + i \beta_k) \cF^{-1}(\psi)) + \cF( e^{-i \sum_{j=2}^d t_j (s\alpha_j + \beta_j)} \cF^{-1} (\pd{\psi}{t_k})) = \\ -\cF(e^{-i \sum_{j=2}^d t_j (s\alpha_j + \beta_j)} \cF^{-1}(\alpha_k \pd{\psi}{t_1} + i \beta_k \psi))  + \cF( e^{-i \sum_{j=2}^d t_j (s\alpha_j + \beta_j)} \cF^{-1} (\pd{\psi}{t_k})) .
\end{multline*}
Now we apply $f$ and integrate on $\R^{d-1}$ to get:
\begin{multline*}
\int_{\R^{d-1}} \langle f, \pd{}{t_k} \left( \cF( e^{-i \sum_{j=2}^d t_j (s\alpha_j + \beta_j)} \cF^{-1} (\psi(\cdot, t_2,\ldots,t_d) \right)\rangle = \\- \langle 
fu_f, \alpha_k \pd{\psi}{t_1} + i \beta_k \psi \rangle + \langle \fu_f, \pd{\psi}{t_k} \rangle.
\end{multline*}
It remains to note that the left hand side is zero since $\psi$ is a Schwarz function on $\R^d$ and $f$ is a distribution on $\R$.

\end{proof}

\begin{lem} \label{lem:sch_maps_to_smooth}
If $f \in \schwartz$ then $\fu_f$ is a smooth function on $\R^d$ that solves the system of input compatibility equations. Furthermore, $u_f$ is given by the formula \eqref{eq:fourier_solution}.
\end{lem}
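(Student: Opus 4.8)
The plan is to show that for Schwartz $f$ the distribution-valued map $L_f$ in fact takes values in genuine functions, to identify $L_f(t_2,\ldots,t_d)$ with the Schwartz function $t_1 \mapsto \pi(\sum_{j=2}^d t_j e_j)f$ described by \eqref{eq:fourier_solution}, and then to read off from \eqref{eq:u_as_dist} that $\fu_f$ is represented by the smooth function \eqref{eq:fourier_solution}. Once $\fu_f$ is known to be this honest function, the solution property comes for free, either from the construction of \eqref{eq:fourier_solution} or from Proposition \ref{prop:weak_solution}.

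First I would record two facts about $\pi$ and then identify $L_f$. Each multiplier $e^{it(s\alpha_j + \beta_j)}$ is a unitary operator on $\cE$ for every real $s$, since $\alpha_j$ and $\beta_j$ are selfadjoint; hence $\pi(t e_j)$ is unitary on $\Ltwoe$, and being the conjugation by $\cF$ of multiplication by a smooth operator-valued symbol all of whose $s$-derivatives are polynomially bounded, $\pi(t e_j)$ restricts to a continuous automorphism of $\schwartz$. In particular $\pi(\sum_{j=2}^d t_j e_j)f \in \schwartz$ for each fixed $(t_2,\ldots,t_d)$. Now, because $f \in \schwartz \subset \Ltwoe$ is a genuine square-integrable function, the duality pairing defining $L_f(t_2,\ldots,t_d)$ is the $L^2$ inner product, and the unitarity of $\cF$ and of the multiplier let me move all the operators off the test function and onto $f$, identifying $L_f(t_2,\ldots,t_d)$ with $\pi(\sum_{j=2}^d t_j e_j)f$. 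By the Fourier-domain computation preceding \eqref{eq:fourier_solution} this is exactly $t_1 \mapsto \frac{1}{\sqrt{2\pi}}\int_{-\infty}^{\infty} e^{\sum_{j=1}^d i t_j(\alpha_j\tau_1 + \beta_j)}\widehat f(\tau_1)\,d\tau_1$, and unitarity also gives the uniform bound $\|L_f(t_2,\ldots,t_d)\|_{L^2(\R)} = \|f\|_{L^2(\R)}$, independent of $(t_2,\ldots,t_d)$.

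I would then assemble $\fu_f$. Writing $U(t_1,\ldots,t_d)$ for the function \eqref{eq:fourier_solution}, the inner pairing in \eqref{eq:u_as_dist} becomes $\langle L_f(t_2,\ldots,t_d),\psi(\cdot,t_2,\ldots,t_d)\rangle = \int_\R \langle U(t),\psi(t)\rangle\,dt_1$, and the uniform $L^2$ bound together with the rapid decay of $\psi$ makes the resulting double integral absolutely convergent; Fubini then yields $\langle \fu_f,\psi\rangle = \int_{\R^d}\langle U(t),\psi(t)\rangle\,dt$ for every $\psi \in \schwartzd$, so that $\fu_f$ is represented by $U$ and $u_f = U$ is indeed the formula \eqref{eq:fourier_solution}. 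Smoothness of $U$ I would obtain by differentiating under the integral sign: since $\widehat f = \cF f \in \schwartz$ decays rapidly while each $t_j$-derivative only inserts a factor $i(\alpha_j\tau_1 + \beta_j)$ that is polynomially bounded in $\tau_1$ (the exponential staying a unitary of norm one), differentiation in any $t_j$ to any order is legitimate and $U \in C^\infty(\R^d,\cE)$. That $U$ solves the input compatibility system is built into \eqref{eq:fourier_solution}: the pencils $\alpha_j\tau_1 + \beta_j$ commute by the $VR$ conditions \eqref{eq:commutation_conditions}, so the matrix exponential differentiates termwise as in the scalar case; alternatively $\fu_f = U$ is already a weak solution by Proposition \ref{prop:weak_solution}, and a smooth weak solution is classical.

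The step I expect to be the main obstacle is the identification of $L_f$ in the second paragraph: one must track the two Fourier transforms and the anti-linear pairing convention carefully when transferring the unitary multiplier from the test function onto $f$, and then secure all the estimates uniformly in $(t_2,\ldots,t_d)$ so that both the interchange of integration (Fubini) and the differentiation under the integral in the last paragraph are fully justified.
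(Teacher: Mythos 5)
Your proof is correct and takes essentially the same approach as the paper: both identify $L_f(t_2,\ldots,t_d)$ with the genuine Schwartz function $\pi(\sum_{j\geq 2}t_j e_j)f$, so that $\fu_f$ is represented by the smooth function \eqref{eq:fourier_solution}, and then conclude via Proposition \ref{prop:weak_solution} that a smooth weak solution is a classical solution. The paper's proof is simply terser, leaving the unitarity/Fubini bookkeeping and the differentiation under the integral sign implicit, exactly the details you fill in.
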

\begin{proof}
Let $f \in \schwartz$ and note that by definition:
\[
L_f(t_2,\ldots,t_d)(\cdot) = (\pi(\cdot,t_2,\ldots,t_d) f)(0).
\]
Here $\pi$ is the representation defined in \eqref{eq:pi}. Therefore, the associated $u$ is the smooth function given by \eqref{eq:fourier_solution} and since it is a weak solution it is a solution. 
\end{proof}

\begin{lem} \label{lem:extend_to_L2}
The representation $\pi$ (defined by \eqref{eq:pi}) of $\R^d$ on $\schwartz$ extends to a unitary representation of $\R^d$ on $\Ltwoe$.
\end{lem}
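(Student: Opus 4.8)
The plan is to realize each operator $\pi(te_j)$ as the conjugate by the Fourier transform of a multiplication operator acting in the frequency variable, and to exploit the fact that, after the normalization $\sigma_1 = I_{\cE}$ made at the start of this section, the operators $\alpha_j$ and $\beta_j$ are selfadjoint on $\cE$. Writing $M_j(t)$ for the operator $(M_j(t)g)(s) = e^{i t (s\alpha_j + \beta_j)} g(s)$, the definition \eqref{eq:pi} reads $\pi(te_j) = \cF^{-1} M_j(t)\,\cF$. Since $\cF$ is already known to extend to an isometric automorphism of $\Ltwoe$, it suffices to show that each $M_j(t)$ extends to a unitary operator on $\Ltwoe$, and then to assemble the single-axis operators into a representation of the whole group.

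First I would note that for every fixed real $s$ the operator $s\alpha_j + \beta_j$ is bounded and selfadjoint on $\cE$, so $e^{i t (s\alpha_j + \beta_j)}$ is unitary on $\cE$; moreover $s \mapsto e^{i t(s\alpha_j + \beta_j)}$ is strongly continuous and uniformly bounded (by $1$) in $s$. Hence for $g \in \Ltwoe$ the function $s \mapsto e^{i t (s\alpha_j + \beta_j)} g(s)$ is again weakly measurable and
\[
\int_{\R} \|e^{i t (s\alpha_j + \beta_j)} g(s)\|^2\, ds = \int_{\R} \|g(s)\|^2\, ds.
\]
Thus $M_j(t)$ is a well-defined isometry of $\Ltwoe$ with two-sided inverse $M_j(-t)$, hence unitary, and conjugating by $\cF$ shows that $\pi(te_j)$ extends to a unitary operator on $\Ltwoe$ that agrees on the dense subspace $\schwartz$ with the operator given by \eqref{eq:pi}.

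Next I would assemble these into a representation of $\R^d$. For $t = (t_1,\ldots,t_d)$ set $M(t)$ to be multiplication by $\exp\bigl(i \sum_{j=1}^d t_j (s\alpha_j + \beta_j)\bigr)$ and $\pi(t) = \cF^{-1} M(t)\,\cF$. The homomorphism property $M(t)M(t') = M(t+t')$ reduces to the assertion that the operators $s\alpha_j + \beta_j$ commute pairwise for every real $s$; expanding the commutator $[s\alpha_j + \beta_j,\, s\alpha_k + \beta_k]$ as a quadratic polynomial in $s$, its vanishing for all $s$ is precisely the content of the three commutation relations of \eqref{eq:commutation_conditions}, the coefficients of $s^2$, $s^1$, and $s^0$ yielding $[\alpha_j,\alpha_k]=0$, $[\alpha_j,\beta_k]=[\alpha_k,\beta_j]$, and $[\beta_j,\beta_k]=0$ respectively. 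This identifies $M(t)$ with $\prod_j M_j(t_j)$ and gives $\pi(t)\pi(t') = \pi(t+t')$, so $\pi$ is a homomorphism from $\R^d$ into the unitary group of $\Ltwoe$.

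Finally I would address strong continuity: on $\schwartz$ the representation is already smooth, the operators $\pi(t)$ are uniformly bounded since unitary, and $\schwartz$ is dense in $\Ltwoe$, so a standard three-epsilon approximation upgrades pointwise continuity on the dense subspace to strong continuity on all of $\Ltwoe$. The genuinely delicate point is the extension step: one must confirm that multiplication by the operator-valued function $s \mapsto e^{i t (s\alpha_j + \beta_j)}$ is well defined on all of $\Ltwoe$ and not merely on Schwarz functions, and this is exactly where the uniform boundedness and strong measurability in $s$ of this unitary-valued multiplier are used.
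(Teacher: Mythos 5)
Your proof is correct and takes essentially the same route as the paper: the paper's entire argument is the observation that, since $\alpha_j$ and $\beta_j$ are selfadjoint and $t_j$, $s$ are real, multiplication by $e^{i \sum_{j=1}^d t_j (s\alpha_j + \beta_j)}$ is unitary, so conjugation by the (already unitary) Fourier transform gives the extension. Your additional verifications --- the homomorphism property via the vanishing of $[s\alpha_j+\beta_j,\,s\alpha_k+\beta_k]$, which is exactly \eqref{eq:commutation_conditions}, and strong continuity by density of $\schwartz$ --- are details the paper leaves implicit, and they are carried out correctly.
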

\begin{proof}
Since both $\alpha_j$ and $\beta_j$ are selfadjoint for every $j = 1,\ldots,d$ and $t_j$ and $s$ are real, the multiplication by $e^{i \sum_{j=1}^d t_j ( s \alpha_j + \beta_j)}$ is a unitary operator.
\end{proof}

To better understand the solutions we will study their behavior on lines. Consider the formula \eqref{eq:fourier_solution}. If we fix a line $\tau x + y$ in $\R^d$, then we define for every $f \in \schwartz$ the following operator:
\begin{equation} \label{eq:fourier_on_line}
(\Lambda(x,y)f)(\tau) = \left(\pi(\tau x + y) f\right)(0) = \frac{1}{\sqrt{2 \pi}} \int_{-\infty}^{\infty} e^{i \tau (s \alpha(x) + \beta(x))} e^{i (s \alpha(y) + \beta(y))} \widehat{f}(s) ds.
\end{equation}
Notice that by Lemma \ref{lem:sch_maps_to_smooth} we have that $\fu_f$ is defined by the formula \eqref{eq:fourier_solution} and thus, for every $x,y \in \R^d$ and $\tau \in \R$, we have:
\begin{equation} \label{eq:schwarz_restriction}
\fu_f(\tau x + y) = (\Lambda(x,y)f)(\tau).
\end{equation}

Let us summarize the discussion above in the following theorem:

\begin{thm} \label{thm:schwartz_summary}
Given a vessel $\fV$ satisfying the $VR$ conditions and such that $\sigma_1 > \epsilon I$ for some $\epsilon > 0$, for every initial condition $f \in \tempdist$ we have a weak solution of the system of input compatibility conditions in tempered distributions defined by \eqref{eq:u_as_dist}. Furthermore, the following holds:
\begin{itemize}
\item If $f \in \schwartz$, then $\fu_f$ is a smooth function on $\R^d$, that solves the system of input compatibility equations.

\item Fix $x \in \R^d$, such that $\alpha(x) > \epsilon I$ as well, then for every $y \in \R^d$, we can extend $\Lambda(x,y)$ to an isometric isomorphism from $\Ltwoe$ to $\Ltwoex$. Furthermore, for $x,x^{\prime},y\in \R^d$, the map $\Lambda(x,y) \Lambda(x^{\prime},y)^* $ is a causal isometric isomorphism, i.e., for every $f \in \Ltwoe$ we have the following equalities:
\begin{align} \label{eq:causal_isometry}
\begin{split}
& \int_0^{\infty} \langle \alpha(x) (\Lambda(x,y) f)(s) , (\Lambda(x,y) f)(s) \rangle ds = \int_{0}^{\infty} \langle \alpha(x^{\prime}) (\Lambda(x^{\prime},y)f)(s) , (\Lambda(x^{\prime},y)f)(s) \rangle ds,\\
& \int_{-\infty}^{0} \langle \alpha(x) (\Lambda(x,y) f)(s) , (\Lambda(x,y) f)(s) \rangle ds = \int_{0}^{\infty} \langle \alpha(x^{\prime}) (\Lambda(x^{\prime},y)f)(s) , (\Lambda(x^{\prime},y)f)(s) \rangle ds.
\end{split}
\end{align}
In particular if $y = 0$ and $x^{\prime} = e_1$, then we get that $\Lambda(x,0)$ is a causal isometric isomorphism from $\Ltwoe$ to $\Ltwoex$.

\item If $f$ is a twice continuously differentiable function on $\R$, such that $f,f^{\prime}, f^{\prime\prime} \in \Ltwoe$, then $\fu_f$ is a locally integrable function given by the formula \eqref{eq:fourier_solution} and for every $x,y \in \R^d$, such that $x \in \Pos(\fV)$, the restriction of $\fu_f$ to the line $\tau x + y$ is given by $\Lambda(x,y)f$, namely:
\[
\fu_f(\tau x + y) = (\Lambda(x,y)f)(\tau).
\]
\item If $f$ is a twice continuously differentiable function on $\R$, such that $f,f^{\prime}, f^{\prime\prime} \in \Ltwoe$, then $\fu_f$ is a $C^1$, $\cE$-valued function on $\R^d$ that solves the input compatibility conditions. Furthermore, $\fu_f$ is uniquely determined by its restriction to the $t_1$-axis (or in fact any line with direction vector in $\Pos(\fV)$).

\item If $f \in \Ltwoe$ then for every $\xi \in \Pos(\fV)$ and every $\psi \in \cS(\R^d,\cE)$ we have that:
\[
\langle \fu_f, \psi \rangle = \int_{\xi^{\perp}} \int_{-\infty}^{\infty} \langle (\Lambda(\xi,\eta)f)(s),\psi(\xi s + \eta) \rangle ds d\eta.
\]
\end{itemize}
\end{thm}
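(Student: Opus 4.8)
The plan is to write $J_\xi(f)$ for the right–hand side and to prove the identity $\langle \fu_f,\psi\rangle = J_\xi(f)$ in three movements: (i) each $J_\xi$, for $\xi\in\Pos(\fV)$, is a bounded linear functional of $f$ on $\Ltwoe$; (ii) for $f\in\schwartz$ the value $J_\xi(f)$ is independent of $\xi$; and (iii) for the distinguished direction $e_1$ one has $\langle\fu_f,\psi\rangle=J_{e_1}(f)$ for \emph{every} $f\in\Ltwoe$, directly from the definition \eqref{eq:u_as_dist}. Granting these, $J_\xi$ and $J_{e_1}$ are two bounded functionals on $\Ltwoe$ that agree on the dense subspace $\schwartz$ by (ii), hence agree everywhere, and combining this with (iii) yields $\langle\fu_f,\psi\rangle=J_{e_1}(f)=J_\xi(f)$ for all $f$. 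Note that $e_1\in\Pos(\fV)$ since we have normalized $\sigma_1=I$.

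For (i), I would fix $\xi\in\Pos(\fV)$, so that $\alpha(\xi)>\delta I$ for some $\delta>0$. By the second item of this theorem, for each $\eta\in\xi^{\perp}$ the map $\Lambda(\xi,\eta)$ is an isometry of $\Ltwoe$ onto $L^2(\R,\cE,\alpha(\xi))$; writing $g_\eta=\Lambda(\xi,\eta)f$ this gives the uniform bound $\int_{-\infty}^{\infty}\|g_\eta(s)\|^2\,ds\le \delta^{-1}\|f\|_{L^2}^2$. Cauchy–Schwarz in $s$ then produces
\[
\left|\int_{-\infty}^{\infty}\langle g_\eta(s),\psi(\xi s+\eta)\rangle\,ds\right|\le \delta^{-1/2}\|f\|_{L^2}\,\Psi(\eta),\qquad \Psi(\eta)=\Big(\int_{-\infty}^{\infty}\|\psi(\xi s+\eta)\|^2\,ds\Big)^{1/2}.
\]
Since $\psi$ is Schwartz on $\R^d$ and $|\xi s+\eta|^2=s^2|\xi|^2+|\eta|^2$ for $\eta\perp\xi$, the function $\Psi$ decays rapidly on $\xi^{\perp}$, so $\int_{\xi^{\perp}}\Psi(\eta)\,d\eta<\infty$ and $|J_\xi(f)|\le C_{\xi,\psi}\|f\|_{L^2}$.

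For (ii), let $f\in\schwartz$. By Lemma \ref{lem:sch_maps_to_smooth} and \eqref{eq:schwarz_restriction}, $\fu_f$ is a bounded smooth function with $\fu_f(\xi s+\eta)=(\Lambda(\xi,\eta)f)(s)$, so that $\langle\fu_f,\psi\rangle=\int_{\R^d}\langle\fu_f(t),\psi(t)\rangle\,dt$ is an ordinary convergent integral. For $\xi$ of unit length the change of variables $t=\xi s+\eta$ with $\eta\in\xi^{\perp}$ is measure preserving and turns this integral into exactly $J_\xi(f)$; hence $J_\xi(f)=\langle\fu_f,\psi\rangle$ for Schwartz $f$ and, in particular, $J_\xi(f)=J_{e_1}(f)$.

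The genuine obstacle is (iii), because the map $f\mapsto\fu_f$ is constructed in \eqref{eq:u_as_dist} only through the distributional estimates of Proposition \ref{prop:dist_on_R^d}, which do not visibly control it in the $L^2$–norm, so one cannot simply invoke density for the left–hand side. I would resolve this by an explicit computation in the $e_1$ direction: since $\alpha_1=I$ and $\beta_1=0$, the operator $\pi(t e_1)$ from \eqref{eq:pi} is translation by $t$, whence $\Lambda(e_1,\eta)f=\pi(\eta)f$ for $\eta\in e_1^{\perp}$. By Lemma \ref{lem:extend_to_L2} this is an identity of $\Ltwoe$–functions for every $f\in\Ltwoe$, so the distribution $L_f(t_2,\dots,t_d)$ appearing in \eqref{eq:u_as_dist} is precisely the $L^2$–function $\Lambda(e_1,(0,t_2,\dots,t_d))f$, and \eqref{eq:u_as_dist} becomes $\langle\fu_f,\psi\rangle=J_{e_1}(f)$ verbatim. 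This both supplies (iii) and shows that $f\mapsto\langle\fu_f,\psi\rangle$ is $L^2$–bounded, closing the argument as described in the first paragraph.
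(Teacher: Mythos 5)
Your proposal proves only the \emph{final} item of the theorem. The statement also asserts: (preamble) that \eqref{eq:u_as_dist} defines a weak solution for every $f\in\tempdist$; (first item) smoothness for Schwartz data; (second item) that $\Lambda(x,y)$ extends to an isometric isomorphism of $\Ltwoe$ onto $\Ltwoex$ satisfying the causality relations \eqref{eq:causal_isometry}; (third and fourth items) that for twice continuously differentiable $f$ with $f,f',f''\in\Ltwoe$ the distribution $\fu_f$ is a locally integrable, in fact $C^1$, function given by \eqref{eq:fourier_solution}, whose restriction to each line $\tau x+y$ with $x\in\Pos(\fV)$ is $\Lambda(x,y)f$, and which is uniquely determined by its restriction to such a line. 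The preamble and the first item are restatements of Propositions \ref{prop:dist_on_R^d}, \ref{prop:weak_solution} and Lemma \ref{lem:sch_maps_to_smooth}, so citing them would be legitimate; but the second, third and fourth items are new claims whose proofs form the bulk of the paper's argument (the causal isometry is obtained by applying \cite[Prop.\ 2.1]{BV} to Schwartz restrictions and extending by density; the regularity, line-restriction and uniqueness claims are proved via a decay estimate on $\widehat f$, difference quotients and dominated convergence). Your text omits all of this and moreover invokes ``the second item of this theorem'' to obtain the uniform bound on $\Lambda(\xi,\eta)$ --- this is not circular, since that item is proved independently of the fifth, but it does mean your proposal is a proof of one fifth of the statement, not of the statement.

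For the item you do treat, your argument is correct and is in substance the paper's own: (a) the identity for Schwartz $f$ via $\fu_f(\xi s+\eta)=(\Lambda(\xi,\eta)f)(s)$ and a change of variables (the paper's ``by the definition of $\fu_f$ and Fubini''); (b) $L^2$-boundedness of the right-hand side from the isometry property of $\Lambda(\xi,\eta)$ plus rapid decay of $\psi$ (the paper's estimate $\lvert J_\xi(f-f_n)\rvert\le C'(\xi)\|f-f_n\|$); (c) the observation that for $\xi=e_1$ the claimed formula \emph{is} the definition \eqref{eq:u_as_dist}, because $L_f(t_2,\ldots,t_d)$ coincides with the $L^2$-function $\pi(0,t_2,\ldots,t_d)f=\Lambda(e_1,(0,t_2,\ldots,t_d))f$ when $f\in\Ltwoe$. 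The paper runs (a)--(c) as an explicit approximation $f_n\to f$ with Schwartz $f_n$, while you phrase it as two bounded functionals agreeing on a dense subspace; these are the same argument, and your step (iii) usefully makes explicit what the paper's phrase ``applying the same consideration to the case $\xi=e_1$'' leaves implicit. One further remark: you correctly note that the change of variables $t=\xi s+\eta$ is measure preserving only when $\lvert\xi\rvert=1$; since $\Pos(\fV)$ is a cone, the stated formula misses a Jacobian factor $\lvert\xi\rvert$ for non-unit $\xi$ --- a normalization the paper glosses over --- but by restricting to unit vectors your proof then does not literally cover every $\xi\in\Pos(\fV)$ either, so this point deserves an explicit sentence rather than a silent restriction.
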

\begin{proof}
We have already proved the first claim of the theorem (see Lemma \ref{lem:sch_maps_to_smooth}). To prove the second we note that by virtue of Lemma \ref{lem:sch_maps_to_smooth}, if $f \in \schwartz$ then $\Lambda(x,y) f$ is a smooth function on $\R$ and is a restriction of a solution for the system of input compatibility equations to the line $\tau x + y$ (see \eqref{eq:schwarz_restriction}). Now we recall that $\alpha(x) > 0$ and hence we can apply \cite[Prop.\ 2.1]{BV} ((the proof given there in the case $d=2$ extends verbatim to the case of an arbitrary $d$) to get that the equations \eqref{eq:causal_isometry} hold in this case. Now we can extend $\Lambda(x,y)$ as an isometry from $\Ltwoe$ to $\Ltwoex$.

We will prove the third and the fourth claims together. The proof follows similar lines to \cite[Thm.\ 7.3.5]{Eva10}. We note that by assumption there exists a function $g \in \Ltwoe$ and a constant $C > 0$, such that for every $s \in \R$, we have:
\[
\|\widehat{f}(s)\| \leq C (1 + |s|^2)^{-1}\|\widehat{g}(s)\|.
\]
Hence for every $t \in \R^d$, we have:
\[
\|e^{i \sum_{j=1}^d t_j(s \alpha_j + \beta_j)} \widehat{f}(s)\| \leq C (1+|s|^2)^{-1} \|\widehat{g}(s)\|.
\]
We conclude that:
\begin{multline*}
\int_{-\infty}^{\infty} \|e^{i \sum_{j=1}^d t_j(s \alpha_j + \beta_j)} \widehat{f}(s)\|ds  \leq C \int_{-\infty}^{\infty} (1+|s|^2)^{-1} \|\widehat{g}(s)\| ds \leq  \\ C \sqrt{\int_{-\infty}^{\infty} \|\widehat{g}(s)\|^2 ds} \sqrt{\int_{-\infty}^{\infty} (1+|s|^2)^{-2} ds} < \infty.
\end{multline*}

Hence we have $L_f(t_2,\ldots,t_d)(t_1) = \frac{1}{\sqrt{2\pi}} \int_{-\infty}^{\infty} e^{i \sum_{j=1}^d t_j(s \alpha_j + \beta_j)} \widehat{f}(s)ds$. Thus $\fu_f$ is just integrating $L_f$ against $\varphi$, and therefore we can conclude that $\fu_f$ is a function and we can identify $\fu_f(t_1,\ldots,t_d) = L_f(t_2,\ldots,t_d)(t_1)$. Furthermore, it is now clear that the restriction of $\fu_f$ to lines is given by $\Lambda(x,y) f$.

Let now $0 < |h| < 1$, we write:
\begin{multline*}
\frac{\fu_f(t_1+h,t_2,\ldots,t_d) - \fu_f(t_1,\ldots,t_d)}{h} = \\ \frac{1}{\sqrt{2\pi}h} \int_{-\infty}^{\infty} e^{i \sum_{j=2}^d t_j(s \alpha_j + \beta_j)} \left( e^{i s (t_1+h)} - e^{i s t_1} \right) \widehat{f}(s) ds
\end{multline*}
Since $e^{i s (t_1 + h)} - e^{i s t_1} = i s \int_{t_1}^{t_1 + h} e^{ i s x} dx$, we can use the integral mean value theorem to obtain:
\[
e^{i s (t_1 + h)} - e^{i s t_1} = i s h e^{i s c}.
\]
Here $c$ lies between $t_1$ and $t_1 + h$. Therefore:
\[
\frac{\fu_f(t_1+h,t_2,\ldots,t_d) - \fu_f(t_1,\ldots,t_d)}{h} = \frac{i}{\sqrt{2\pi}} \int_{-\infty}^{\infty} e^{i \sum_{j=2}^d t_j(s \alpha_j + \beta_j)} s e^{i s c} \widehat{f}(s) ds
\]
The integral thus converges, since $\|s \widehat{f}(s)\| \leq C |s|(1 + |s|^2)^{-1} \|\widehat{g}(s)\|$ and the same argument as above applies. Now applying the dominated convergence theorem we can deduce that $\pd{\fu_f}{t_1}$ exists and is continuous. A similar argument applies to every $t_j$.

Uniqueness follows from the fact that we can consider the lines parallel to the $t_1$-axis and use the second part to note that if a solution is $0$ on the $t_1$-axis then using the isometry it is $0$ along every such line and thus it is identically zero.

If $f$ is a Schwartz function, then $\fu_f$ is a smooth function and we have the desired equality by the definition of $\fu_f$ and Fubini's theorem. Now we approximate $f \in \Ltwoe$ be a sequence of Schwartz functions and since $\Lambda(\xi,\eta)$ is an isometry from $\Ltwoe$ to $L^2(\R,\cE,\alpha(\xi))$, we get that $\|\Lambda(\xi,\eta) f\| \leq C(\xi) \|f\|$. Thus for $M$ sufficiently large:
\begin{multline*}
\| \int_{\xi^{\perp}} \int_{-\infty}^{\infty} \langle (\Lambda(\xi,\eta)(f - f_n))(s),\psi(\xi s + \eta) \rangle ds d\eta\| \leq C(\xi) \int_{\xi^{\perp}} \|f - f_n\| \|\psi(\xi s + \eta\|_{L^2} d\eta \\ = C(\xi) \int_{\xi^{\perp}} \|f - f_n\| (1 + \|\eta\|^{2M})^{-1} \| (1 + \|\eta\|^{2M}) \psi(\xi s + \eta) \|_{L^2}d\eta \leq C^{\prime}(\xi) \|f - f_n\| .
\end{multline*}
The last inequality follows from the fact that:
\begin{multline*}
\|(1 + \|\eta\|^{2M} \psi(\xi s + \eta)\|_{L^2}^2 = \int_{-\infty}^{\infty} (1 + s^2)^{-1} \|(1 + \|\eta\|^{2M}) (1 + s^2) \psi(\xi s + \eta)\|^2 ds \\ \leq \sup_{(s,\eta) \in \R^d} \|(1 + \|\eta\|^{2M}) (1 + s^2) \psi(\xi s + \eta)\|^2 \int_{-\infty}^{\infty} (1 + s^2)^{-1} ds.
\end{multline*}
Now applying the same consideration to the case when $\xi = e_1$ we get that for every $\psi \in \cS(\R^d,\cE)$ we have $\langle \fu_{f_n}, \psi \rangle \to \langle \fu_f, \psi \rangle$ and additionally:
\begin{multline*}
\langle \fu_f, \psi \rangle = \lim_{n \to \infty} \langle \fu_{f_n} ,\psi \rangle = \lim_{n \to \infty}\int_{\xi^{\perp}} \int_{-\infty}^{\infty} \langle (\Lambda(\xi,\eta)f_n)(s),\psi(\xi s + \eta) \rangle ds d\eta \\ =  \int_{\xi^{\perp}} \int_{-\infty}^{\infty} \langle (\Lambda(\xi,\eta)f)(s),\psi(\xi s + \eta) \rangle ds d\eta
\end{multline*}
\end{proof}

\begin{rem}
In fact, the third and fourth statements of the preceding theorem is true for functions in the Sobolev space $W^{2,2}(\R,\cE)$. For more details on Sobolev spaces of Banach space valued functions, see \cite{Ama95}.
\end{rem}

\begin{rem} \label{rem:u_intertwines}
Note that for every $f \in \schwartz$ it is immediate from \eqref{eq:fourier_on_line} and the definition of $\pi$ that for every $t \in \R^d$:
\[
\Lambda(x,y)(\pi(t) f) = \Lambda(x,y+t)(f)
\]
Thus, in particular, if $x = e_j$ one of the vectors in the standard basis of $\R^d$, then:
\[
(\Lambda(e_j,0)(\pi(t_j e_j) f))(\tau) = (\Lambda(e_j,t_j e_j)f)(\tau) = (\Lambda(e_j,0)f)(\tau + t_j).
\]
The last equality follows from Equation \eqref{eq:fourier_on_line}. We conclude that if $\alpha_j > \epsilon I$ for some $\epsilon > 0$, then $\Lambda_j = \Lambda(e_j,0)$ intertwines the action of $\R$ on $\Ltwoej$ by translations with the action of $\R$ on $\Ltwoe$ by the restriction of $\pi$ to the one parameter subgroup generated by $e_j$. 
\end{rem}

\section{Unitary Dilation of Semigroups} \label{sec:main}

In order to apply the results of the previous section to dilation theory we need a definition

\begin{dfn} \label{dfn:dissipative_embedding}
Let $\cA = (A_1,\ldots,A_d)$ be a $d$-tuple of commuting dissipative operators on a separable Hilbert space $\cH$. We say that $\cA$ has the dissipative embedding property if $\cA$ can be embedded in a vessel $\fV$ satisfying the $VR$ conditions and such that $\Pos(\fV) \neq \emptyset$ and for every $j=1,\ldots,d$ the standard basis vector $e_j \in \overline{\Pos(\fV)}$.
\end{dfn}

\begin{lem} \label{lem:positive_orthant}
Assume that $\cA$ is a $d$-tuple of commuting dissipative operators on a separable Hilbert space $\cH$ and assume that we can embed $\cA$ in a vessel $\fV$, such that for every $j = 1,\ldots,d$, we have $\sigma_j \geq 0$. Then $\sum_{j=1}^d \im \sigma_j = \cE$ if and only if $\Pos(\fV)$ is not empty. In that case we have that $\R_{>0}^d \subseteq \Pos(\fV)$. In particular if we can embed $\cA$ in a vessel $\fV$, such that for every $j = 1,\ldots,d$, we have $\sigma_j \geq 0$, $\sum_{j=1}^d \im \sigma_j = \cE$ and the vessel satisfies the $VR$ conditions, then $\cA$ has the dissipative embedding property.
\end{lem}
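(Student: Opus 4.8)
The plan is to handle the stated equivalence and the inclusion $\R_{>0}^d \subseteq \Pos(\fV)$ by a single factorization argument, and then to read off the ``in particular'' assertion directly from Definition \ref{dfn:dissipative_embedding}. The easy implication $\Pos(\fV) \neq \emptyset \Rightarrow \sum_{j=1}^d \im \sigma_j = \cE$ I would dispose of first: if $\xi \in \Pos(\fV)$ then $\xi\sigma = \sum_{j=1}^d \xi_j \sigma_j$ is bounded below, hence invertible, so any $y \in \cE$ can be written $y = \sum_{j=1}^d \sigma_j\big(\xi_j (\xi\sigma)^{-1} y\big) \in \sum_{j=1}^d \im \sigma_j$, giving $\sum_{j=1}^d \im \sigma_j = \cE$. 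This direction uses only invertibility of $\xi\sigma$, not positivity of the $\sigma_j$.

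For the substantive direction I would fix an arbitrary $\xi \in \R_{>0}^d$ (the choice $\xi = (1,\ldots,1)$ being the special case that yields nonemptiness) and prove $\xi\sigma \geq cI$ for some $c>0$. Positivity of the $\sigma_j$ lets us take square roots, so I define the bounded operator $R_\xi \colon \cE^d \to \cE$ by $R_\xi(x_1,\ldots,x_d) = \sum_{j=1}^d \sqrt{\xi_j}\,\sigma_j^{1/2} x_j$. A direct computation gives $R_\xi R_\xi^* = \sum_{j=1}^d \xi_j \sigma_j = \xi\sigma$. Testing $R_\xi$ on vectors supported in a single coordinate shows $\im R_\xi \supseteq \sum_{j=1}^d \im \sigma_j^{1/2} \supseteq \sum_{j=1}^d \im \sigma_j = \cE$, so $R_\xi$ is surjective. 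I then invoke the standard consequence of the open mapping theorem that a surjective bounded operator has a bounded-below adjoint, i.e. $\|R_\xi^* y\| \geq \sqrt{c}\,\|y\|$ for some $c>0$. Since $\langle \xi\sigma\, y, y\rangle = \langle R_\xi R_\xi^* y, y\rangle = \|R_\xi^* y\|^2 \geq c\|y\|^2$, we conclude $\xi\sigma \geq cI$, that is $\xi \in \Pos(\fV)$. Specializing to $\xi=(1,\ldots,1)$ gives $\Pos(\fV)\neq\emptyset$, and letting $\xi$ range over all of $\R_{>0}^d$ gives $\R_{>0}^d\subseteq\Pos(\fV)$.

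The step I expect to be the main obstacle, and the point I would stress, is that the hypothesis $\sum_{j=1}^d \im \sigma_j = \cE$ must be read as the \emph{algebraic} sum exhausting all of $\cE$; this is strictly stronger than $\cap_{j=1}^d \ker \sigma_j = 0$ and is exactly what forces $R_\xi$ to be surjective rather than merely to have dense range. The tempting shortcut is to deduce $\xi\sigma \geq cI$ from triviality of $\ker(\xi\sigma)$ alone, but this fails for positive operators with non-closed range (for instance a diagonal operator with eigenvalues tending to $0$ has trivial kernel yet no spectral gap). It is precisely the open mapping step that upgrades surjectivity of $R_\xi$ to the quantitative lower bound $\xi\sigma \geq cI$.

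Finally, for the ``in particular'' statement I would verify Definition \ref{dfn:dissipative_embedding} termwise. Given an embedding with $\sigma_j \geq 0$, $\sum_{j=1}^d \im \sigma_j = \cE$, and the $VR$ conditions, the previous paragraphs yield $\R_{>0}^d \subseteq \Pos(\fV)$, so in particular $\Pos(\fV)\neq\emptyset$. Since $\overline{\R_{>0}^d} = \R_{\geq 0}^d$ contains every standard basis vector $e_j$, we obtain $e_j \in \overline{\Pos(\fV)}$ for all $j$. Together with the assumed $VR$ conditions these are exactly the requirements of the dissipative embedding property, so $\cA$ has it.
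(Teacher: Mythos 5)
Your proof is correct and follows essentially the same route as the paper's: both factor $\xi\sigma$ through the square roots $\sigma_j^{1/2}$, observe that the resulting row operator $\cE^d \to \cE$ is surjective because $\sum_{j=1}^d \im \sigma_j = \cE$, and invoke the open mapping theorem to get the quantitative lower bound $\xi\sigma \geq cI$ (the paper does $\xi = (1,\ldots,1)$ first and then rescales $\sigma_j \mapsto \xi_j\sigma_j$, while you absorb the weights $\sqrt{\xi_j}$ into the operator from the start — a cosmetic difference). Your treatment of the easy direction, writing $y = \sum_j \sigma_j\bigl(\xi_j(\xi\sigma)^{-1}y\bigr)$, is marginally more general than the paper's (which drops the terms with $\xi_j \leq 0$ using positivity), but both are fine.
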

\begin{proof}
If there is a point $\xi \in \Pos(\fV)$, then there exists $\epsilon > 0$, such that for every $u \in \cE$, we have:
\[
\sum_{j=1}^n \xi_j \langle \sigma_j u , u \rangle \geq \epsilon \| u \|^2.
\]
Since each $\sigma_j$ is positive semi-definite, if we omit the terms with $\xi_j \leq 0$ from the above sum, then we just increase it. Hence:
\[
\sum_{\xi_j > 0} \xi_j \langle \sigma_j u , u \rangle \geq \epsilon \| u \|^2.
\]
This implies that the operator $\sum_{\xi_j > 0} \xi_j \sigma_j \geq \epsilon I$ and is in particular invertible, hence $\sum_{j=1}^d \im \sigma_j = \cE$.

Now assume conversely that $\sum_{j=1}^d \im \sigma_j = \cE$. Since the $\sigma_j$ are positive semi-definite, they admit selfadjoint square roots. We have that for every $j = 1,\ldots,d$, $\im \sigma_j \subseteq \im \sqrt{\sigma_j} $ and hence $\sum_{j=1}^d \im \sqrt{\sigma_j} = \cE$. We conclude that the row $(\sqrt{\sigma_1},\ldots,\sqrt{\sigma_d})$ is strictly surjective, therefore there exists $\epsilon > 0$, such that for every $u \in \cE$:
\[
\left\| \begin{pmatrix}
\sqrt{\sigma_1} u \\ \vdots \\ \sqrt{\sigma_d} u
\end{pmatrix}\right\|^2 \geq \epsilon \|u\|^2.
\]
But since $\| \sqrt{\sigma_j} u \|^2 = \langle \sigma_j u , u \rangle$, we have that the point $(1,\ldots,1) \in \Pos(\fV)$.

From the above discussion we have that $\Pos(\fV) \cap \R_{>0}^d \neq \emptyset$. On the other hand for every $\xi \in \R_{>0}^d$, we replace $\sigma_j$ with $\sqrt{\xi_j} \sigma_j$ in the above argument and get that the point $\xi \in \Pos(\fV)$. 
\end{proof}

\begin{thm} \label{thm:dissipative_embedding_dilation}
If a $d$-tuple $\cA$ of commuting dissipative operators on a separable Hilbert space $\cH$ has the dissipative embedding property, then the semigroup of contractions generated by $\cA$ admits a commutative unitary dilation.
\end{thm}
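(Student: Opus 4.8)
The plan is to realize the dilating unitaries as the shift representation $\pi$ of $\R^d$ constructed in Section~\ref{sec:cont_solve} and to embed $\cH$ into its representation space so that compression in the forward orthant recovers the semigroup. First I would reduce to the hyperbolic, normalized setting. Using the dissipative embedding property (Definition~\ref{dfn:dissipative_embedding}) fix $\xi \in \Pos(\fV)$; after the coordinate change of Proposition~\ref{prop:coord_change} and Corollary~\ref{cor:generic_change} taking $\xi$ to $e_1$, and passing to the $\sigma_1$-inner product on $\cE$ as in Section~\ref{sec:cont_solve}, I may assume $\sigma_1 > \epsilon I$, so that Theorem~\ref{thm:schwartz_summary} and Lemma~\ref{lem:extend_to_L2} apply. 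Since $\Pos(\fV)$ is an open convex cone (Definition~\ref{dfn:positive_cone}) whose closure contains each $e_j$, that closure contains the entire closed positive orthant, so every direction $t \in \R_{\ge 0}^d$ is causal (when moreover all $\sigma_j \ge 0$, Lemma~\ref{lem:positive_orthant} even places the open orthant inside $\Pos(\fV)$). The dilation space is $\cK = \Ltwoe$ and the candidate unitary group is $U(t) = \pi(t)$, $t \in \R^d$, which by Lemma~\ref{lem:extend_to_L2} is a commutative group of unitaries; commutativity of the dilation is therefore automatic, being a genuine representation of the abelian group $\R^d$.

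Next I would treat one direction at a time. For fixed $j$, Remark~\ref{rem:u_intertwines} shows that $\Lambda_j = \Lambda(e_j,0)$ intertwines the restriction of $\pi$ to $\{s e_j\}$ with translation on $\Ltwoej$; since $e_j \in \overline{\Pos(\fV)}$, this intertwiner is causal, being a limit of the causal isometries of \eqref{eq:causal_isometry}. Thus $\{\pi(s e_j)\}_{s \ge 0}$ is, up to the unitary $\Lambda_j$, the forward translation group, which is the classical one-parameter unitary dilation (reviewed in Section~\ref{sec:dilations}; cf.\ \cite{SNa53}) of the contraction semigroup $e^{i s A_j}$ generated by the single dissipative operator $A_j$. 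The embedding $\iota \colon \cH \to \cK$ is the one furnished by this one-dimensional model, whose isometry bookkeeping is exactly the energy balance \eqref{eq:energy_balance}: with $u \equiv 0$ one has the free trajectory $x(t) = e^{i \langle t, A\rangle} h$, $y = -i\Phi x$ (cf.\ \eqref{eq:mult-dim_x}), and \eqref{eq:energy_balance} with $\sigma_j \ge 0$ records the radiated energy $\|h\|^2 - \lim_{s\to\infty}\|e^{isA_j}h\|^2$ that the minimal dilation completes to an isometry.

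The crux, and the main obstacle, is to show that a single embedding $\iota$ works simultaneously for all $d$ directions and that $U$ is a regular (joint) dilation, so that the successive compressions compose:
\[
P_{\iota\cH}\, U(t)\,\big|_{\iota\cH} = e^{i \sum_{j=1}^d t_j A_j}, \qquad t \in \R_{\ge 0}^d .
\]
This is precisely the point at which naive multi-parameter dilation fails (Parrott \cite{Par70}), and where the $VR$ conditions enter decisively. Because $\fV$ satisfies the $VR$ conditions, the input-compatibility system is solvable consistently along every axis: by Theorem~\ref{thm:schwartz_summary} each initial datum on the reference line extends to a single function on all of $\R^d$ whose restrictions to the various lines are governed by the one representation $\pi$ and related by the causal isometries \eqref{eq:causal_isometry}. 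Consequently the free trajectories determining $\iota$ are single $\R^d$-functions, the one-dimensional embeddings of the previous step coincide, and the forward cone $\Pos(\fV)$ supplies one coherent notion of past and future common to all directions. I would use this to identify $\iota\cH$ with the orthogonal complement of the incoming and outgoing subspaces attached to $\Pos(\fV)$ and to verify the displayed compression by propagating the one-dimensional compression successively in $e_1, \ldots, e_d$, the commuting causal structure ensuring that the intermediate projections drop out. Establishing this joint causal structure from the $VR$ conditions and the cone geometry is the technical heart of the argument.

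Once it is in place, $U(t) = \pi(t)$ is the desired commutative unitary dilation, and restricting to $\cK_{\min} = \overline{\Span}\{U(t)\iota\cH : t \in \R^d\}$ yields the minimal one.
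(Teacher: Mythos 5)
There is a genuine gap, and it lies exactly where you located ``the technical heart'': your choice of dilation space is wrong, and no embedding $\iota$ of the kind you postulate can exist. You take $\cK = \Ltwoe$ with $U(t) = \pi(t)$. But $\pi$ is defined (see \eqref{eq:pi}) purely in terms of the operators $\alpha_j = \sigma_1^{-1}\sigma_j$ and $\beta_j = \sigma_1^{-1}\gamma_{1j}$ on the auxiliary space $\cE$; it carries no information whatsoever about $A_1,\ldots,A_d$ or about $\cH$. It is the \emph{free signal propagation} used in Section \ref{sec:cont_solve} to extend input/output data from a line to all of $\R^d$, not a candidate dilation group. Concretely, $\pi(t e_1)$ is just translation on $\Ltwoe$, which has no nonzero fixed vectors. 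Now take $\cH = \C^2$, $A_1 = \left(\begin{smallmatrix} i & 0 \\ 0 & 0 \end{smallmatrix}\right)$, $A_2 = \cdots = A_d = 0$: this tuple is doubly commuting and dissipative, embeds in a strict vessel with $\sigma_1 = 1$, $\sigma_j = 0$ ($j \geq 2$) on $\cE = \C$, hence has the dissipative embedding property by Corollary \ref{cor:doubly_commuting}, yet $h = (0,1)$ satisfies $e^{itA_1}h = h$. For any unitary dilation, $P_{\iota \cH} U(t)\iota h = \iota h$ together with $\|U(t)\iota h\| = \|\iota h\|$ forces $U(t)\iota h = \iota h$, i.e.\ $\iota h$ would be a fixed vector of the translation group --- impossible. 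The same obstruction defeats your fallback of identifying $\iota\cH$ with the orthocomplement of incoming and outgoing subspaces: that Lax--Phillips identification requires the semigroup to be completely non-unitary (scattering completeness), which is not part of the hypotheses.

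The paper avoids this by keeping $\cH$ inside the dilation space explicitly: it proves the more general Theorem \ref{thm:pos_dilation} on $\cK = L^2(\R_{<0},\cE) \oplus \cH \oplus L^2(\R_{>0},\cE)$, and defines $\rho(t)(y,h,u)$ by running the full input/state/output system, not the free propagation. Given $(y,h,u)$, Lemma \ref{lem:one_dim_ext} produces the unique system trajectory $(\tilde y, x, \tilde u)$ along the $t_1$-axis; the $VR$ conditions and Theorem \ref{thm:schwartz_summary} (this is where $\pi$ and the causal isometries $\Lambda(\xi,\eta)$ actually enter) extend $\tilde u$ and $\tilde y$ to solutions $u^\dagger, y^\dagger$ of the compatibility systems on all of $\R^d$; formula \eqref{eq:mult-dim_x} then yields the state $x(t)$, and $\rho(t)$ shifts the whole trajectory. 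Unitarity comes from the energy balance \eqref{eq:energy_balance} combined with the causality relations \eqref{eq:causal_isometry} (Lemma \ref{lem:rep_is_isometry}), and the compression identity $P_{\cH}\rho(t)(0,h,0) = e^{itA}h$ is immediate from Lemma \ref{lem:weak_trajectory} --- there is no need for the ``successive one-dimensional compressions'' you propose, which in any case you do not justify. Your instinct that the $VR$ conditions provide a coherent causal structure across all directions is the right one, but that structure must act on trajectories of the coupled system over $L^2(\R_{<0},\cE) \oplus \cH \oplus L^2(\R_{>0},\cE)$, not on signals alone.
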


This theorem is a corollary of the following slightly more general theorem, that we will prove in the next section.

\begin{thm} \label{thm:pos_dilation}
Assume that $\cA$ is a $d$-tuple of commuting dissipative operators on a Hilbert space and assume that $\cA$ can be embedded into a vessel $\fV$ that satisfies the $VR$ conditions and $\Pos(\fV) \neq \emptyset$, then the semigroup of contractions generated by $\cA$ restricted to $\overline{\Pos(\fV)}$ admits a commutative unitary dilation.
\end{thm}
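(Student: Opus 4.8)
The plan is to use the commutative unitary representation $\pi$ of $\R^d$ on $\Ltwoe$ from Lemma \ref{lem:extend_to_L2} as the spectral skeleton of the dilating group, and to realize $\cH$ inside a Hilbert space of finite-energy trajectories of the conservative system, on which the translation along the cone is implemented by $\pi$. First I would normalize: fix $\xi_0 \in \Pos(\fV)$ and, exactly as at the beginning of Section \ref{sec:cont_solve}, pass to the $\sigma(\xi_0)$-inner product on $\cE$ so that $\sigma(\xi_0)=I_{\cE}$; this leaves $\Pos(\fV)$ unchanged and puts Theorem \ref{thm:schwartz_summary} at my disposal. The machinery I would rely on is that each $f \in \Ltwoe$ produces a solution $\fu_f$ of the input compatibility conditions whose restriction to a line $\tau x + y$ is $\Lambda(x,y)f$, that $\Lambda(x,0)$ is a causal isometric isomorphism onto $L^2(\R,\cE,\alpha(x))$ for $x \in \Pos(\fV)$, and that $\pi(\tau x)$ acts as translation by $\tau$ along the direction $x$ (Remark \ref{rem:u_intertwines}).

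The heart of the argument, and the step that genuinely uses the multi-parameter hypotheses, is to install a single incoming/outgoing (Lax--Phillips type) structure valid for \emph{every} direction of the cone simultaneously. For each $\xi \in \Pos(\fV)$ define the outgoing and incoming subspaces $\cK_+^{\xi} = \{ f : \Lambda(\xi,0)f \text{ vanishes on } (-\infty,0) \}$ and $\cK_-^{\xi}$ analogously. I would then invoke the causality equations \eqref{eq:causal_isometry}: since the past (resp.\ future) energy of a fixed $f$ in direction $\xi$ equals its past (resp.\ future) energy in any other $\xi' \in \Pos(\fV)$, membership in $\cK_{\pm}^{\xi}$ is independent of $\xi$, yielding direction-independent subspaces $\cK_{\pm}$. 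Because $\pi(\tau \xi)$ is translation along $\xi$, the family $\{ \pi(-t) : t \in \overline{\Pos(\fV)} \}$ compresses coherently against $\cK_{\pm}$; this is precisely what forces one unitary group of $\R^d$ to dilate the semigroup along every ray of the cone at once, rather than merely along a single axis.

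With the geometry in place I would embed $\cH$ through the state. Along a ray in direction $\xi$, formula \eqref{eq:mult-dim_x} with the boundary condition that the state vanishes in the infinite past gives the co-isometry sending a trajectory to $x(0) \in \cH$, while the zero-input trajectory with initial state $h$ has output $y_h(t) = -i\Phi\, e^{i \sum_{j=1}^d t_j A_j} h$ on $\overline{\Pos(\fV)}$. The energy balance \eqref{eq:energy_balance} with $u = 0$ converts $\|h\|^2$ into the outgoing energy in any direction $\xi \in \Pos(\fV)$, which realizes $\cH$ isometrically inside the trajectory space; combining this with the direction-independence of the causal splitting I would read off the dilation identity $P_{\cH}\,\pi(t)|_{\cH} = e^{i \sum_{j=1}^d t_j A_j}$ for all $t \in \overline{\Pos(\fV)}$, the shift being unitary because the system is energy preserving.

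I expect the main obstacle to be exactly this multi-parameter consistency: showing that the compression agrees with the semigroup, and behaves multiplicatively, simultaneously for every $t$ in the closed cone, not just along $e_1$. This is where the $VR$ conditions enter (they are what guarantees, via Theorem \ref{thm:schwartz_summary}, that $\pi$ and the causal isometries $\Lambda(x,y)$ exist and are mutually compatible), and where the causality equations \eqref{eq:causal_isometry} do the essential work of gluing the one-dimensional pictures together. A secondary technical difficulty, stemming from the fact that we have bounded generators and only a cone $\overline{\Pos(\fV)}$ of positivity, is the careful treatment of the subspace on which the semigroup is already unitary (where $\Phi\, e^{i \sum_{j=1}^d t_j A_j} h \equiv 0$) together with the justification of the limits $x(\pm\infty\,\xi)$ that the energy balance requires.
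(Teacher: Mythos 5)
Your overall strategy --- a pure Lax--Phillips translation model in which $\pi$ on $\Ltwoe$ serves as the dilating group and $\cH$ is embedded through the energy of zero-input trajectories --- has a structural gap that the paper's construction is specifically designed to avoid. Your embedding sends $h$ to the output $y_h$ of the zero-input trajectory and uses the energy balance \eqref{eq:energy_balance} with $u=0$ to convert $\|h\|^2$ into outgoing energy; but with $u=0$ that balance reads $\|h\|^2 = \|x(s\xi)\|^2 + \int_0^s \langle \sigma(\xi) y_h(p\xi), y_h(p\xi)\rangle\, dp$, so the map $h \mapsto y_h$ is isometric only when $\|e^{i s \xi A}h\| \to 0$ as $s \to \infty$. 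This fails whenever the semigroup has non-decaying orbits (most blatantly when $\Phi e^{i s \xi A}h \equiv 0$, so that $y_h \equiv 0$ while $h \neq 0$), and on such vectors your embedded copy of $\cH$ collapses, so the compression identity $P_{\cH}\,\pi(t)|_{\cH} = e^{i t A}$ cannot hold on all of $\cH$. You flag this as a ``secondary technical difficulty,'' but it is not secondary: repairing it forces the residual/state part into the dilation space, i.e., it forces you to keep $\cH$ itself as a summand rather than representing it by boundary data. The same problem undermines your co-isometry ``trajectory $\mapsto x(0)$ with state vanishing in the infinite past'': the state is determined by the input together with an initial condition, and a boundary condition at $-\infty$ selects a unique state only under the same (unavailable) backward-stability assumption.

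This is exactly where the paper's route differs. It takes $\cK = L^2(\R_{<0},\cE)\oplus\cH\oplus L^2(\R_{>0},\cE)$, with $\cH$ an explicit middle summand (so no stability assumption or canonical decomposition is ever needed), and defines $\rho(t)$ dynamically: a triple $(y,h,u)$ is first completed to a full trajectory on the $t_1$-axis by running the system forward and the adjoint system backward (Lemma \ref{lem:one_dim_ext}); then $\tilde{u}$ and $\tilde{y}$ are extended to solutions $u^{\dagger}, y^{\dagger}$ on all of $\R^d$ by the transforms of Theorem \ref{thm:schwartz_summary} (this is where the $VR$ conditions enter), the state on $\R^d$ is produced by \eqref{eq:mult-dim_x}, and $\rho(t)$ reads off the shifted trajectory. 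Isometry of $\rho(t)$ for $t \in \Pos(\fV)$ then follows from the energy balance together with the causality equations \eqref{eq:causal_isometry} --- the same equations you invoke, used for essentially the same purpose --- and unitarity of the whole representation follows because $\Pos(\fV)\cup(-\Pos(\fV))$ spans $\R^d$. Your observation that \eqref{eq:causal_isometry} makes the incoming/outgoing splitting direction-independent is correct and is the right intuition for why one group dilates the semigroup along every ray of the cone at once, but by itself it does not produce the dilation space, the group action, or the compression identity; those require the trajectory-completion construction that your proposal leaves unspecified.
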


\begin{proof}[Proof of Theorem \ref{thm:dissipative_embedding_dilation}] We have that $\R^d_{\geq 0} \subset \overline{\Pos(\fV)}$ (in fact by Lemma \ref{lem:positive_orthant} $\R^d_{>0} \subset \Pos(\fV)$) and thus our semigroup admits a unitary dilation by Theorem \ref{thm:pos_dilation}.
\end{proof}

We shall now deduce a few corollaries from Theorem \ref{thm:dissipative_embedding_dilation}. We first note that we have the following weak form of Ando's theorem (see also \cite{Pta85}, \cite{Slo74} and \cite{Slo82}):

\begin{cor} \label{cor:weak_ando}
Let $A_1$ and $A_2$ be two commuting dissipative operators, such that $\im(A_1-A_1^*) + \im(A_2 - A_2^*)$ is closed. Then the semigroup they generate admits a commutative unitary dilation
\end{cor}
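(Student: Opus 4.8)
The plan is to realize the pair $(A_1,A_2)$ inside the canonical strict vessel and then to invoke Lemma \ref{lem:positive_orthant} together with Theorem \ref{thm:dissipative_embedding_dilation}; the role of the closedness hypothesis will be precisely to upgrade the density of $\sum_j \im \sigma_j$ in $\cE$ to an equality, which is what guarantees $\Pos(\fV) \neq \emptyset$.

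First I would embed the pair into the strict vessel $\fV$ given by \eqref{eq:strict_vessel}, so that $\cE = \overline{\im(A_1 - A_1^*) + \im(A_2 - A_2^*)}$, $\Phi = P_{\cE}$, and $\sigma_j = \frac{1}{i}(A_j - A_j^*)|_{\cE}$. Since each $A_j$ is dissipative, $\frac{1}{i}(A_j - A_j^*) \geq 0$, so $\sigma_j \geq 0$ for $j = 1,2$. Next I would identify the ranges: because $A_j - A_j^* = i\Phi^* \sigma_j \Phi = i\Phi^*\sigma_j P_{\cE}$ vanishes on $\cE^{\perp}$ and acts as $i\sigma_j$ on $\cE$, we have $\im(A_j - A_j^*) = \im \sigma_j$ as subspaces of $\cE$. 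Therefore $\sum_{j=1}^2 \im \sigma_j = \im(A_1 - A_1^*) + \im(A_2 - A_2^*)$, which by hypothesis is closed and hence equal to its closure $\cE$. This is the step where the closedness assumption is essential: without it one only obtains a dense subspace, and then $\Pos(\fV)$ could be empty.

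It remains to check the $VR$ conditions. For $d = 2$ the conditions \eqref{eq:commutation_conditions} are indexed by $j,k \in \{2\}$, so they consist only of commutators of an operator with itself, while \eqref{eq:additional_condition} reduces to $\gamma_{22} = 0$, which holds automatically since $\gamma_{jk} = -\gamma_{kj}$; thus the $VR$ conditions hold vacuously, as noted in the introduction. With $\sigma_1, \sigma_2 \geq 0$, $\sum_{j=1}^2 \im \sigma_j = \cE$, and the $VR$ conditions satisfied, Lemma \ref{lem:positive_orthant} applies and shows that $\cA = (A_1, A_2)$ has the dissipative embedding property; in fact it gives $\R^2_{>0} \subseteq \Pos(\fV)$, so $\Pos(\fV) \neq \emptyset$ and $e_1, e_2 \in \R^2_{\geq 0} \subseteq \overline{\Pos(\fV)}$. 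Theorem \ref{thm:dissipative_embedding_dilation} then yields a commutative unitary dilation of the semigroup generated by $A_1, A_2$, completing the argument.

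The main obstacle is the verification that $\sum_{j} \im \sigma_j$ fills out all of $\cE$; everything else is bookkeeping within the strict vessel construction. I expect the identification $\im \sigma_j = \im(A_j - A_j^*)$ and the passage from density to equality via the closedness hypothesis to be the only genuinely substantive points, with the vacuity of the $VR$ conditions and the final appeal to the dilation theorem being formal.
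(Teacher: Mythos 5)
Your proof is correct and takes essentially the same route as the paper: embed $(A_1,A_2)$ into the strict vessel \eqref{eq:strict_vessel}, observe that the $VR$ conditions are vacuous when $d=2$, use the closedness hypothesis to verify the hypotheses of Lemma \ref{lem:positive_orthant}, and conclude via Theorem \ref{thm:dissipative_embedding_dilation}. The only difference is that you spell out the identification $\im \sigma_j = \im(A_j - A_j^*)$ and the passage from density to equality, steps the paper leaves implicit.
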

\begin{proof}
Note that every vessel of a pair of commuting operators satisfies the $VR$ conditions vacuously. Furthermore, by our assumption we can embed $A_1$ and $A_2$ into a strict vessel that satisfies the conditions of Lemma \ref{lem:positive_orthant}. Therefore, they have the dissipative embedding property and we are done.
\end{proof}

Recall that given a strongly continuous one-parameter semigroup $C$ of contractions on a Hilbert space $\cH$, by a theorem of Hille and Yosida, it has a generator, namely $C(t) = e^{i A t}$, where $A$ is a dissipative (generally unbounded) operator on $\cH$. If we apply the Cayley transform to $A$ we obtain a contractive operator $T = (A -i I)(A + i I)^{-1}$, that is called the cogenerator of the semigroup. Note that the semigroup can be recovered from $T$ via exponentiation of the inverse Cayley transform, namely $C(t) = \exp(t (T-I)(T+1)^{-1})$. We can also recover the cogenerator from the semigroup directly by the following formula (see \cite{NFBK10} for details):
\begin{equation} \label{eq:cogen}
T = \lim_{s \to 0+} \varphi_s(C(s)),
\end{equation}
\begin{equation} \label{eq:recover_cogen}
\varphi_s(\lambda) =\frac{\lambda - 1 + s}{\lambda - 1 -s} = \frac{1-s}{1+s} - \frac{2s}{1+s} \sum_{n=1}^{\infty} \frac{\lambda^n}{(1+s)^n}.
\end{equation}
Furthermore, it was proved by Sz.-Nagy that $T$ is unitary if and only if $C$ is a unitary semigroup. Now if we have a multi-parameter commutative group of unitaries, its generators are strongly commuting selfadjoint operators (in the sense that the associated projection valued measures commute). Therefore, applying the Cayley transform we get a commuting $d$-tuple of unitaries. Using \eqref{eq:cogen} and \eqref{eq:recover_cogen} we conclude that for a commutative semigroup of contractions that admit a commutative unitary dilation, the cogenerators of the unitary group are  commuting dilations of the cogenerators of the original semigroup.  This discussion leads us to the following negative result:


\begin{prop} \label{prop:parrot}
Not every $d$-tuple of commuting dissipative operators on a separable Hilbert space has the dissipative embedding property.
\end{prop}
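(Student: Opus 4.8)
The obstruction is Parrott's classical counterexample \cite{Par70}, and the plan is to transport it through the Cayley transform into the dissipative setting. Recall that Parrott exhibited three commuting contractions admitting no commuting unitary dilation: one fixes unitaries $u_1,u_2,u_3$ on a separable Hilbert space $\cK$ and sets $T_j = \genmatrix{0}{0}{u_j}{0}$ on $\cH = \cK \oplus \cK$. Then $T_j T_k = 0$ for all $j,k$, so the $T_j$ commute and are nilpotent with $T_j^2 = 0$, while for a suitable (indeed generic) choice of $u_1,u_2,u_3$ the triple $(T_1,T_2,T_3)$ has no commuting unitary dilation.

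First I would produce commuting bounded dissipative operators whose cogenerators are the $T_j$. Since each $T_j$ is nilpotent we have $\sigma(T_j) = \{0\}$, so $1 \notin \sigma(T_j)$ and the inverse Cayley transform
\[
A_j = i(I + T_j)(I - T_j)^{-1} = i(I + 2T_j)
\]
is a bounded operator on $\cH$ (using $(I - T_j)^{-1} = I + T_j$). By the correspondence recalled around \eqref{eq:cogen}--\eqref{eq:recover_cogen}, each $A_j$ is dissipative with cogenerator $(A_j - iI)(A_j + iI)^{-1} = T_j$, and it is non-selfadjoint precisely because $T_j$ is not unitary. Finally, each $A_j$ is a function of $T_j$ alone, so the mutual commutativity of the $T_j$ yields that of the $A_j$. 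Thus $\cA = (A_1,A_2,A_3)$ is a genuine triple of commuting dissipative operators.

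It remains to rule out the dissipative embedding property for $\cA$. Suppose, to the contrary, that $\cA$ had this property. By Theorem \ref{thm:dissipative_embedding_dilation} the commutative semigroup $e^{i \sum_{j} t_j A_j}$, $t \in \R^3_{\geq 0}$, would then admit a commutative unitary dilation $U$. As explained in the discussion preceding this proposition, the cogenerators of the one-parameter subgroups of $U$ are commuting unitaries dilating the cogenerators $T_1,T_2,T_3$ of the semigroup generated by $\cA$; this is the multivariable form of the Sz.-Nagy--Foias cogenerator correspondence. We would thereby obtain a commuting unitary dilation of $(T_1,T_2,T_3)$, contradicting Parrott. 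Hence $\cA$ does not have the dissipative embedding property. For $d > 3$ the same conclusion follows by padding with $A_4 = \cdots = A_d = 0$, since a dilation of the $d$-parameter semigroup would restrict to one of the three-parameter semigroup generated by $A_1,A_2,A_3$.

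The only genuinely delicate point is the implication \emph{unitary dilation of the semigroup} $\Rightarrow$ \emph{commuting unitary dilation of the cogenerators}, which is exactly the correspondence set up via \eqref{eq:cogen} and \eqref{eq:recover_cogen} in the text; everything else reduces to the elementary observation that the inverse Cayley transforms of Parrott's square-zero contractions are bounded, commuting and dissipative.
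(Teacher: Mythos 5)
Your proof is correct and takes essentially the same route as the paper: transport Parrott's three commuting contractions to commuting dissipative operators via the inverse Cayley transform, then combine Theorem \ref{thm:dissipative_embedding_dilation} with the cogenerator correspondence \eqref{eq:cogen}--\eqref{eq:recover_cogen} to contradict the nonexistence of a commuting unitary dilation. You merely spell out details the paper leaves implicit (the explicit formula $A_j = i(I+2T_j)$, the verification of commutativity and dissipativity, and the padding argument for $d>3$).
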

\begin{proof}
Consider the Parrot example described in \cite{Par70}. The example is three commuting contractive matrices with spectrum concentrated at $0$.  Hence we can apply the Cayley transform to obtain three commuting dissipative operators, $\cA = (A_1,A_2,A_3)$. If $\cA$ had the dissipative embedding property then by Theorem \ref{thm:dissipative_embedding_dilation} the semigroup they generate would have had a commutative unitary dilation. Thus the cogenerators of this dilation would have been commuting unitary dilations of the original Parrot example and that is a contradiction. 
\end{proof}

The following corollary is also well known, see for example \cite{Pta85} and \cite{Sha10}.

\begin{cor} \label{cor:regular_dilation}
If $A_1,\ldots,A_d$ are doubly commuting, dissipative operators on $\cH$, such that $\sum_{j=1}^d \im(A_j - A_j^*)$ is closed, then the semigroup they generate admits a dilation to a commutative semigroup of unitaries.
\end{cor}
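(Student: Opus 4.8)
The plan is to exhibit, for the given $d$-tuple $\cA = (A_1,\ldots,A_d)$, a vessel embedding satisfying the hypotheses of the final clause of Lemma \ref{lem:positive_orthant}, and then to invoke Theorem \ref{thm:dissipative_embedding_dilation}. Concretely, I would embed $\cA$ into the essentially unique strict vessel $\fV$ of \eqref{eq:strict_vessel}, with non-Hermitian subspace $\cE = \overline{\sum_{j=1}^d \im(A_j - A_j^*)}$ and $\sigma_j = \frac{1}{i}(A_j - A_j^*)|_{\cE}$. The whole argument is then an assembly of the machinery already in place: the only thing requiring care is checking that the doubly-commuting hypothesis and the closedness hypothesis feed correctly into Corollary \ref{cor:doubly_commuting} and into the two uses of Lemma \ref{lem:positive_orthant}.

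First I would record the two positivity/range facts. Dissipativity of each $A_j$ means precisely $\frac{1}{i}(A_j - A_j^*) \geq 0$, so that $\sigma_j \geq 0$ for every $j$. For the range condition, note that $\frac{1}{i}(A_j - A_j^*)$ is a selfadjoint operator on $\cH$ whose range lies in $\cE$; hence $\cE^{\perp} \subseteq \ker \sigma_j$ and $\im \sigma_j = \im(A_j - A_j^*)$ as subspaces of $\cE$. The closedness hypothesis says that $\sum_{j=1}^d \im(A_j - A_j^*)$ already equals its own closure, so $\cE = \sum_{j=1}^d \im(A_j - A_j^*)$, and therefore $\sum_{j=1}^d \im \sigma_j = \cE$.

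With $\sigma_j \geq 0$ and $\sum_{j=1}^d \im \sigma_j = \cE$ established, Lemma \ref{lem:positive_orthant} gives $\Pos(\fV) \neq \emptyset$ and in fact $(1,\ldots,1) \in \Pos(\fV)$, i.e.\ $\sum_{j=1}^d \sigma_j$ is invertible on $\cE$. Equivalently, taking $\xi = (1,\ldots,1)$, the operator $\sum_{j=1}^d \xi_j (A_j - A_j^*)$ restricts to an invertible operator on the non-Hermitian subspace. This is exactly the hypothesis of Corollary \ref{cor:doubly_commuting}, which together with the doubly-commuting assumption forces $\fV$ to satisfy the $VR$ conditions.

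At this stage all three hypotheses of the last assertion of Lemma \ref{lem:positive_orthant} hold for $\fV$ — namely $\sigma_j \geq 0$, $\sum_{j=1}^d \im \sigma_j = \cE$, and the $VR$ conditions — so $\cA$ has the dissipative embedding property, and Theorem \ref{thm:dissipative_embedding_dilation} produces the desired commutative unitary dilation of the generated semigroup. I do not expect a genuine obstacle here: the content is entirely in the earlier results, and the only subtlety is the bookkeeping observation that the closedness hypothesis plays a double role, first upgrading $\cE$ from the closure of $\sum_j \im(A_j - A_j^*)$ to the sum itself (giving $\sum_j \im \sigma_j = \cE$), and then, through Lemma \ref{lem:positive_orthant}, simultaneously supplying the invertible direction $\xi$ that Corollary \ref{cor:doubly_commuting} demands.
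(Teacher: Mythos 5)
Your proof is correct and follows essentially the same route as the paper: embed $\cA$ in the strict vessel, use the closedness hypothesis via Lemma \ref{lem:positive_orthant} to get $\R^d_{>0} \subseteq \Pos(\fV)$, use double commutativity via Corollary \ref{cor:doubly_commuting} to get the $VR$ conditions, and conclude with Theorem \ref{thm:dissipative_embedding_dilation}. The only difference is that you spell out the bookkeeping (that $\sum_j \im \sigma_j = \cE$ follows from closedness, and that a point of $\Pos(\fV)$ supplies the invertible direction $\xi$ needed by Corollary \ref{cor:doubly_commuting}) which the paper leaves implicit.
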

\begin{proof}
Note that since $\sum_{j=1}^d \im(A_j - A_j^*)$ is closed, by Lemma \ref{lem:positive_orthant} we have that $\Pos(\fV)$ contains the positive orthant, where $\fV$ is the strict vessel. By Corollary \ref{cor:doubly_commuting} we have that $\fV$ satisfies the $VR$ conditions and thus we can apply Theorem \ref{thm:dissipative_embedding_dilation} to get the result.
\end{proof}

\section{Construction of the Dilation (Proof of Theorem \ref{thm:pos_dilation})} \label{sec:dilations}

We are given a $d$-tuple of dissipative operators $\cA = (A_1,\ldots,A_d)$ on a separable Hilbert space $\cH$ embedded in a commutative vessel $\fV$ that satisfies the $VR$ conditions and such that $\Pos(\fV) \neq \emptyset$. We will construct a Hilbert space $\cK$, an isometric embedding $\iota \colon \cH \hookrightarrow \cK$ and a unitary representation $\rho \colon \R^d \to B(\cK)$, such that for every $t \in \Pos(\fV)$ and every $h \in \cH$ we have $\iota^* \rho(t) \iota(h) = e^{i t A} h$. Notice that by passing to the SOT-limit we see that the result still holds for $t \in \overline{\Pos(\fV)}$. 

We assume without loss of generality that $e_1 \in \Pos(\fV)$. As in Section \ref{sec:cont_solve} we may (changing the inner product on $\cE$) assume that $\sigma_1 = I_{\cE}$. Recall from \cite{LKMV} and \cite[Prop.\ 1.3.1]{SV14a} that given a $u \in C^1(\R^d,\cE)$ that satisfies the input compatibility conditions, we can solve the time domain system of equations for any initial condition $x(0) = h \in \cH$ using formula \eqref{eq:mult-dim_x}. In particular:
\begin{equation} \label{eq:one-dim_x}
x(t) = x(t,0,\ldots,0) = e^{ i t A_1} \left( h - i \int_0^t e^{-i s A_1} \Phi^* u(s) ds \right).
\end{equation}
This idea allows us to decompose the space of "nice" trajectories of the associated system into a direct sum of the form $\cW_{out} \oplus \cH \oplus \cW_{in}$. Here $\cH$ represents the initial condition. We then introduce using the theory developed in Section \ref{sec:cont_solve} a unitary representation of $\R^d$ on this space, such that the compression of its $\Pos(\fV)$ semigroup to $\cH$ is our initial semigroup of contractions restricted to $\Pos(\fV)$.

Let us consider first the case of a single operator with $\sigma_1 = I_{\cE}$. This is a classical construction one can find for example in \cite{CBMS87}, \cite{LaxPhl89}, \cite{NFBK10}, \cite{Pav99} and \cite{Sch55}. Note that in this case the one-parameter semigroup $T(t) = e^{i t A_1}$ for $t > 0$ is a semigroup of contractions on $\cH$. Set $\cK = L^2(\R_{<0},\cE) \oplus \cH \oplus L^2(\R_{>0},\cE)$ and we are going to describe a dilation of $T$ to $\cK$. To do this we need the following lemma:
\begin{lem} \label{lem:one_dim_ext}
Given a triple $(y,h,u) \in \cK$ there exists a unique (strongly) absolutely continuous function $x \colon \R \to \cH$, such that for $t > 0$:
\[
i x^{\prime}(t) + A_1 x(t) = \Phi^* u(t)
\]
and for $t < 0$:
\[
i x^{\prime}(t) + A_1^* x(t) = \Phi^* y(t).
\]
We then extend $y$ to $\tilde{y} \colon \R \to \cE$ by defining for $t> 0$:
\[
\tilde{y}(t) = u(t) - i \Phi x(t)
\]
and we extend $u$ to $\tilde{u} \colon \R \to \cE$ by defining for for $t < 0$:
\[
\tilde{u}(t) = y(t) + i \Phi x(t).
\]
Then we have $\tilde{y},\tilde{u} \in L^2(\R,\cE)$.
\end{lem}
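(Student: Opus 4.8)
The plan is to build $x$ by Duhamel's formula on each half-line and then read off the $L^2$ bounds on $\tilde y$ and $\tilde u$ from the energy balance \eqref{eq:energy_balance}, specialized to $\sigma_1 = I_{\cE}$.

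\smallskip
\textbf{Construction and uniqueness of $x$.} Since $\sigma_1 = I_{\cE}$, the colligation condition \eqref{eq:colligation_condition} reads $A_1 - A_1^* = i\Phi^*\Phi$, and the middle component $h$ of the triple plays the role of the initial value $x(0)=h$. For $t>0$ the first equation is the associated system with input $u$, so by \eqref{eq:one-dim_x} I set
\[
x(t) = e^{itA_1}\Bigl(h - i\int_0^t e^{-isA_1}\Phi^* u(s)\,ds\Bigr),
\]
while for $t<0$ the second equation is precisely the one-dimensional adjoint system \eqref{eq:adjoint_system} with input $y$, whence
\[
x(t) = e^{itA_1^*}\Bigl(h - i\int_0^t e^{-isA_1^*}\Phi^* y(s)\,ds\Bigr).
\]
Because $u \in L^2(\R_{>0},\cE)$ and $y \in L^2(\R_{<0},\cE)$ are locally integrable and $A_1$ is bounded, each formula defines a strongly absolutely continuous function on its half-line, the two pieces take the common value $h$ at the origin, and together they give an absolutely continuous $x\colon\R\to\cH$. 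Uniqueness is the standard uniqueness for linear ODEs with bounded (hence Lipschitz) generator: the difference of two solutions solves the homogeneous equation with zero data at the origin and vanishes by Gronwall.

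\smallskip
\textbf{Energy balance.} I then differentiate $\|x(t)\|^2$, which is absolutely continuous. On $t>0$, using $x' = iA_1 x - i\Phi^* u$ together with $A_1 - A_1^* = i\Phi^*\Phi$, one computes $\tfrac{d}{dt}\|x(t)\|^2 = 2\,\mathrm{Re}\langle x'(t),x(t)\rangle = -\|\Phi x(t)\|^2 + 2\,\mathrm{Im}\langle u(t),\Phi x(t)\rangle$, so that, since $\tilde y = u - i\Phi x$,
\[
\frac{d}{dt}\|x(t)\|^2 = \|u(t)\|^2 - \|\tilde y(t)\|^2 .
\]
This is exactly the differential form of \eqref{eq:energy_balance} for the associated system with $\sigma_1 = I_{\cE}$, which remains valid for an absolutely continuous state and locally integrable signals by the remark following \eqref{eq:energy_balance}. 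The identical computation for $t<0$, now with $x' = iA_1^* x - i\Phi^* y$ and $\tilde u = y + i\Phi x$, yields the balance of the adjoint system
\[
\frac{d}{dt}\|x(t)\|^2 = \|\tilde u(t)\|^2 - \|y(t)\|^2 .
\]

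\smallskip
\textbf{Integration and the $L^2$ estimates.} Integrating the first identity over $(0,T)$ and the second over $(-T,0)$ and using $x(0)=h$, I obtain
\[
\int_0^T\|\tilde y(t)\|^2\,dt = \|h\|^2 - \|x(T)\|^2 + \int_0^T\|u(t)\|^2\,dt \le \|h\|^2 + \|u\|_{L^2(\R_{>0},\cE)}^2,
\]
\[
\int_{-T}^0\|\tilde u(t)\|^2\,dt = \|h\|^2 - \|x(-T)\|^2 + \int_{-T}^0\|y(t)\|^2\,dt \le \|h\|^2 + \|y\|_{L^2(\R_{<0},\cE)}^2,
\]
where I simply discard the nonnegative terms $\|x(\pm T)\|^2$. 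As the bounds are independent of $T$, $\tilde y \in L^2(\R_{>0},\cE)$ and $\tilde u \in L^2(\R_{<0},\cE)$. Since $\tilde y$ coincides with the given $y \in L^2(\R_{<0},\cE)$ on the negative axis and $\tilde u$ with the given $u \in L^2(\R_{>0},\cE)$ on the positive axis, both $\tilde y$ and $\tilde u$ lie in $L^2(\R,\cE)$, as claimed. The only delicate point is the reduced-regularity justification that $t\mapsto\|x(t)\|^2$ is absolutely continuous with the stated a.e.\ derivative, so that the fundamental theorem of calculus applies; this is precisely where the hypothesis that $x$ be absolutely continuous (rather than $C^1$) and that $u,y$ be merely $L^2$ is used, as in the remark extending \eqref{eq:energy_balance}. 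Everything else is forced by the colligation condition and the sign $\Phi^*\Phi \ge 0$, i.e.\ by dissipativity.
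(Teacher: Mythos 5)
Your proof is correct and takes essentially the same route as the paper: define $x$ by the Duhamel formula \eqref{eq:one-dim_x} (and its adjoint-system analogue) on each half-line, then apply the energy balance \eqref{eq:energy_balance} with $\sigma_1 = I_{\cE}$ and discard the nonnegative terms $\|x(\pm T)\|^2$ to get $T$-independent $L^2$ bounds on $\tilde{y}$ and $\tilde{u}$. The only cosmetic differences are that you re-derive the differential form of the energy identity and spell out the Gronwall uniqueness argument, both of which the paper leaves implicit by citing \eqref{eq:energy_balance} and the remark following it.
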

\begin{proof}
We define $x$ in terms of $u$ using \eqref{eq:one-dim_x} for $t > 0$ and in terms of $y$ using the analog of \eqref{eq:one-dim_x} for the adjoint system (see  \eqref{eq:adjoint_system}) for $t < 0$:
\begin{equation} \label{eq:explicit_x}
x(t) = \begin{cases} e^{ i t A_1}\left( h - i \int_0^{t} e^{-i s A_1} \Phi^* u(s) ds\right), & t > 0;\\
e^{i t A_1^*} \left( h + i \int_t^{0} e^{-i s A_1^*} \Phi^* y(s) ds\right), & t < 0; \\
\end{cases}
\end{equation}

 Then clearly $x$ is an absolutely continuous $\cH$-valued function on $\R$. Now from the energy conservation equations \eqref{eq:energy_balance} we get that for $t > 0$:
\[
\|x(t)\|^2 - \|h\|^2 = \int_0^t \langle \tilde{u}, \tilde{u} \rangle - \int_0^t \langle \tilde{y}, \tilde{y} \rangle.
\]
We conclude that:
\[
\int_0^t \langle \tilde{y}, \tilde{y} \rangle \leq \int_0^t \langle \tilde{u}, \tilde{u} \rangle + \|h\|^2.
\]
Therefore $\tilde{y} \in L^2(\R,\cE)$ and similarly for $\tilde{u}$.
\end{proof}
Notice that the trajectory $(\tilde{y},x,\tilde{u})$ is the unique trajectory of the system (equivalently $(\tilde{u},x,\tilde{y})$ is a unique trajectory of the adjoint system), such that $\tilde{y}|_{\R_{<0}} = y$, $x(0) = h$ and $\tilde{u}|_{\R_{>0}} = u$ .
The following proposition provides a dilation of the one parameter semigroup of contractions generated by $A_1$.
\begin{prop} \label{prop:one_dim_dilation}
There exists a unitary representation $\rho$ of the Lie group $\R$ on $\cK$, such that if $P$ is the projection onto $\cH$, then:
\[
P \rho(t) (0,h,0) = (0, e^{i t A_1} h, 0).
\]
\end{prop}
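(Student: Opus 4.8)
The plan is to realize $\cK$ as a space of global system trajectories and to let $\rho$ act by translation in time. By Lemma \ref{lem:one_dim_ext} and the discussion following it, every triple $(y,h,u)\in\cK$ determines a unique trajectory $(\tilde u, x, \tilde y)$ of the associated system on all of $\R$ (equivalently, of the adjoint system), with $\tilde u,\tilde y\in L^2(\R,\cE)$ and such that $\tilde y|_{\R_{<0}}=y$, $x(0)=h$ and $\tilde u|_{\R_{>0}}=u$; conversely, such a global trajectory is recovered from, and hence is in bijection with, its triple. First I would record this correspondence explicitly, using formula \eqref{eq:explicit_x} for $x$ together with the two extension formulas of Lemma \ref{lem:one_dim_ext} for $\tilde u$ and $\tilde y$, so that $\cK$ is identified with the global trajectories.

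Second, I would define $\rho$. Since the associated system is time invariant, for any $\tau\in\R$ the translated functions $(\tilde u(\cdot+\tau),\, x(\cdot+\tau),\, \tilde y(\cdot+\tau))$ again form a global trajectory; being globally $L^2$, its restrictions lie in the appropriate half-line spaces, so it corresponds to a unique triple in $\cK$. I would set
\[
\rho(\tau)(y,h,u)=\left(\tilde y(\cdot+\tau)|_{\R_{<0}},\, x(\tau),\, \tilde u(\cdot+\tau)|_{\R_{>0}}\right).
\]
That $\rho$ is a representation, i.e.\ $\rho(0)=I$ and $\rho(\tau_1)\rho(\tau_2)=\rho(\tau_1+\tau_2)$, is then immediate, because translation of trajectories is a flow and the passage between triples and global trajectories is bijective.

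The main work is to show each $\rho(\tau)$ is unitary, and here the energy balance equation \eqref{eq:energy_balance} (with $\sigma_1=I_\cE$) is the essential tool. For $\tau>0$ I would compute, starting from $\|(y,h,u)\|^2=\int_{-\infty}^0\|\tilde y\|^2+\|x(0)\|^2+\int_0^\infty\|\tilde u\|^2$ and the analogous expression for $\rho(\tau)(y,h,u)$ after a change of variables, that
\[
\|\rho(\tau)(y,h,u)\|^2-\|(y,h,u)\|^2=\|x(\tau)\|^2-\|x(0)\|^2+\int_0^\tau\|\tilde y\|^2\,ds-\int_0^\tau\|\tilde u\|^2\,ds,
\]
which vanishes by \eqref{eq:energy_balance}; the case $\tau<0$ is symmetric. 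Since $\rho(\tau)$ is then isometric with two-sided inverse $\rho(-\tau)$, it is unitary, and strong continuity follows from continuity of translation in $L^2(\R,\cE)$ together with continuity of $\tau\mapsto x(\tau)$. I expect the only delicate points to be the bookkeeping that the translated trajectory really re-decomposes into the stated triple, which is guaranteed by the uniqueness in Lemma \ref{lem:one_dim_ext}, and the verification that the energy identity applies to these merely absolutely continuous, $L^2$ trajectories, which is covered by the remark following \eqref{eq:energy_balance}. Finally, the dilation property is read off directly: for the triple $(0,h,0)$ formula \eqref{eq:explicit_x} gives $x(\tau)=e^{i\tau A_1}h$ for $\tau>0$, so that $P\rho(\tau)(0,h,0)=(0,x(\tau),0)=(0,e^{i\tau A_1}h,0)$.
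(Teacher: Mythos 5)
Your proposal is correct and follows essentially the same route as the paper: you define $\rho(\tau)$ by translating the unique global trajectory $(\tilde u,x,\tilde y)$ attached to a triple via Lemma \ref{lem:one_dim_ext}, prove unitarity by the change-of-variables computation combined with the energy balance identity \eqref{eq:energy_balance}, and read off $P\rho(\tau)(0,h,0)=(0,e^{i\tau A_1}h,0)$ from \eqref{eq:explicit_x}. Your additional remarks on the group law (via uniqueness of the extended trajectory) and on strong continuity are sound and slightly more explicit than the paper's own write-up, but they do not change the substance of the argument.
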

\begin{proof}
Let $(y,h,u) \in \cK$, let $(\tilde{y},x,\tilde{u})$ be the unique trajectory of the system associated to our triple, where $x$ is defined by \eqref{eq:explicit_x}. Denote by $\tilde{y}_t(s) = \tilde{y}(s+t)$ and similarly $\tilde{u}_t(s) = \tilde{u}(s+t)$, for every $t \in \R$. Now we define our representation as follows:
\[
\rho(t)(y,h,u) = (\tilde{y}_t|_{\R_{<0}},x(t),\tilde{u}_t|_{\R_{>0}}).
\]
Using the energy balance equations we obtain for $t > 0$:
\[
\|x(t)\|^2 = \|x(0)\|^2 + \int_0^t \langle u(s) ,u(s) \rangle ds - \int0^t \langle \tilde{y}(s), \tilde{y}(s) \rangle ds. 
\]
Therefore for $t > 0$:
\begin{multline*}
\|\rho(t)(y,h,u)\|^2 = \|x(t)\|^2 + \int_{-\infty}^0 \langle \tilde{y}_t(s), \tilde{y}_t(s) \rangle ds + \int_0^{\infty} \langle \tilde{u}_t(s), \tilde{u}_t(s) \rangle ds = \\ \|x(t)\|^2 + \int_{-\infty}^0 \langle \tilde{y}(s+t), \tilde{y}(s+t) \rangle ds + \int_0^{\infty} \langle \tilde{u}(s+t), \tilde{u}(s+t) \rangle ds = \\ \|h\|^2 + \int_0^t \langle u(s) ,u(s) \rangle ds - \int_0^t \langle \tilde{y}(s), \tilde{y}(s) \rangle ds +  \int_{-\infty}^{t} \langle \tilde{y}(s), \tilde{y}(s) \rangle ds + \\ \int_{t}^{\infty} \langle \tilde{u}(s), \tilde{u}(s) \rangle ds = \|h\|^2 + \int_0^{\infty} \langle u(s), u(s) \rangle ds + \int_{-\infty}^0 \langle y(s), y(s) \rangle ds = \|(y,h,u)\|^2
\end{multline*}

Hence $\rho$ is a unitary representation of $\R$. Now note that from \eqref{eq:explicit_x} we have that for the triple $(0,h,0)$ the associated $x$ is:
\[
x(t) = \begin{cases} e^{i t A_1} h, & t> 0\\ e^{i t A_1^*} h, & t < 0 \end{cases}.
\]
Hence for positive $t$ we obtain that $P\rho(t)(0,h,0) = (0,x(t),0) = (0,e^{ i t A_1} h,0)$.
\end{proof}

This idea leads us to consider the following construction. We construct weak solutions of the system of input and output compatibility equations from $\tilde{u}$ and $\tilde{y}$ and we plug these weak solution, more precisely the functions $\Lambda(\xi,\eta)(\tilde{u})$ and $\Lambda(\xi,\eta)(\tilde{y})$, into the formula \eqref{eq:mult-dim_x} to get a state function $x$ on $\R^d$ that is absolutely continuous on lines $\xi t + \eta$, such that $\xi \in \Pos(\fV)$. However, first we need a dense subspace to work with.

\begin{lem} \label{lem:k_0_dense}
Let $\cK_0 \subset \cK$ be the subspace of triples $(y,h,u)$, such that both $\tilde{y}$ and $\tilde{u}$ are twice continuously differentiable and both of the derivatives are square-summable, then $\overline{\cK_0} = \cK$.
\end{lem}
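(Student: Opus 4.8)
The plan is to exhibit a dense supply of vectors in $\cK_0$ by mollifying along the unitary group $\rho$ from Proposition \ref{prop:one_dim_dilation}. First I would record the structural fact that underlies everything: for $v=(y,h,u)\in\cK$, the functions $\tilde y$ and $\tilde u$ of Lemma \ref{lem:one_dim_ext} depend boundedly and linearly on $v$ (the $L^2$ bounds are exactly the energy estimate in that lemma), and together with $x$ they form a genuine trajectory of the time-invariant system on all of $\R$. Indeed, although $x$ is defined piecewise in \eqref{eq:explicit_x} (the forward equation on $\R_{>0}$ and the adjoint equation on $\R_{<0}$), the colligation condition $A_1-A_1^*=i\Phi^*\Phi$ (recall $\sigma_1=I_\cE$) together with the identity $\Phi x=\tfrac1i(\tilde u-\tilde y)$, which holds on all of $\R$, shows that $ix'+A_1x=\Phi^*\tilde u$ and $ix'+A_1^*x=\Phi^*\tilde y$ hold simultaneously on the whole line. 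Hence $(\tilde y,x,\tilde u)$ is a global system trajectory, and by uniqueness of such trajectories the action of $\rho$ is simply translation: writing $\tilde u_v,\tilde y_v$ for the functions attached to $v$, one has $\tilde u_{\rho(t)v}(\cdot)=\tilde u_v(\cdot+t)$ and $\tilde y_{\rho(t)v}(\cdot)=\tilde y_v(\cdot+t)$.

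With this covariance in hand, the proof becomes a standard Gårding-type smoothing. For $\phi\in C_c^\infty(\R)$ and $v\in\cK$ I would set $v_\phi=\int_\R\phi(t)\,\rho(t)v\,dt$, a Bochner integral in $\cK$. Since $v\mapsto(\tilde y_v,\tilde u_v)$ is bounded and linear into $\Ltwoe\oplus\Ltwoe$, it commutes with the integral, so the functions attached to $v_\phi$ are the correlations $\tilde u_{v_\phi}(s)=\int_\R\phi(t)\,\tilde u_v(s+t)\,dt$ and likewise for $\tilde y$. Differentiating under the integral sign and transferring each $s$-derivative onto the smooth factor $\phi$ gives $\partial_s^k\tilde u_{v_\phi}=(-1)^k\int_\R\phi^{(k)}(t)\,\tilde u_v(\cdot+t)\,dt$, whence by Young's inequality $\|\partial_s^k\tilde u_{v_\phi}\|_{L^2}\le\|\phi^{(k)}\|_{L^1}\|\tilde u_v\|_{L^2}<\infty$ for every $k$, and similarly for $\tilde y_{v_\phi}$. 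Thus $\tilde u_{v_\phi}$ and $\tilde y_{v_\phi}$ are smooth with all derivatives in $L^2(\R,\cE)$, in particular twice continuously differentiable with square-integrable first and second derivatives, so $v_\phi\in\cK_0$.

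Finally I would let $\phi$ run through an approximate identity $\phi_\epsilon\ge0$, $\int\phi_\epsilon=1$, $\operatorname{supp}\phi_\epsilon\to\{0\}$, and estimate $\|v_{\phi_\epsilon}-v\|\le\int\phi_\epsilon(t)\|\rho(t)v-v\|\,dt\to0$ using the strong continuity of $\rho$. This shows $v\in\overline{\cK_0}$ for arbitrary $v\in\cK$, hence $\overline{\cK_0}=\cK$.

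The step I expect to be the main obstacle is the covariance claim in the first paragraph: one must be sure that $\rho(t)$ genuinely translates the pair $(\tilde y_v,\tilde u_v)$ and not merely the state $x$ and the one-sided pieces $y,u$. The potential trap is the seam at the origin, where $x$ switches from the adjoint recursion to the forward one; the resolution is precisely that the colligation relation forces the two recursions to coincide, so that there is in fact no seam at the level of the global trajectory and the translation structure is exact. Once this is established, the mollification and the passage to the limit are routine.
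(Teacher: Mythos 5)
Your proof is correct, but it takes a genuinely different route from the paper's. The paper works directly at the seam $t=0$: it observes that membership in $\cK_0$ reduces to requiring $u$ and $y$ to be $C^2$ on their half-lines with square-summable derivatives, together with finitely many matching conditions linking their jets at $0$ to $h$ (for instance $\tilde u(0)-\tilde y(0)=i\Phi h$, plus two analogous conditions on first and second derivatives), and then invokes density in $L^2(\R_{\pm},\cE)$ of twice continuously differentiable functions with prescribed values and derivatives at the endpoint; one fixes $h$ and approximates $u$ and $y$ by smooth functions whose jets satisfy the constraints. You instead run a G\aa rding-type mollification along the one-parameter unitary group $\rho$ of Proposition \ref{prop:one_dim_dilation}. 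Your pivotal covariance claim, that $\rho(t)$ translates the full-line extensions $(\tilde y_v,\tilde u_v)$, is sound: the colligation identity $A_1-A_1^*=i\Phi^*\Phi$ shows the forward and adjoint equations hold simultaneously on all of $\R$, so $(\tilde y,x,\tilde u)$ is a global trajectory, and this is precisely the uniqueness statement the paper records immediately after Lemma \ref{lem:one_dim_ext}; uniqueness plus time-invariance of the system then yields the translation structure, and there is no circularity since Proposition \ref{prop:one_dim_dilation} is established on all of $\cK$ before $\cK_0$ is introduced. Two small points you should make explicit rather than assume: the strong continuity of $\rho$ (needed both for the Bochner integral and for the approximate-identity limit), which follows from continuity of $x$ and strong continuity of translation on $\Ltwoe$, and the boundedness of $v\mapsto(\tilde y_v,\tilde u_v)$, which is indeed exactly the energy estimate in the proof of Lemma \ref{lem:one_dim_ext}. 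As for what each route buys: the paper's argument is more elementary, using only the explicit formula \eqref{eq:explicit_x} and one-variable $L^2$ approximation, while yours avoids writing down any boundary compatibility conditions and actually produces a dense set of vectors whose extensions are $C^\infty$ with all derivatives in $L^2$ --- strictly more than $\cK_0$ demands --- at the price of invoking the group structure and trajectory uniqueness.
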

\begin{proof}
First note that since both $\tilde{u}$ and $\tilde{y}$ are twice continuously differentiable, we have that $x$ is thrice continuously differentiable and $h = x(0)$. Using \eqref{eq:explicit_x} we get that $h$ is independent of $u$ and $y$ (it is in fact the initial condition). We must require that $\lim_{t \to 0+} u(t)$ and $\lim_{t \to 0-} y(t)$ exist and we denote them by $\tilde{u}(0)$ and $\tilde{y}(0)$, respectively. Similarly for their derivatives. Furthermore, we have the following condition on the values at $0$ and the derivatives:
\[
\tilde{u}(0) - \tilde{y}(0) = i \Phi h, \quad \tilde{u}^{\prime}(0) - \tilde{y}^{\prime}(0) = \Phi \Phi^* \tilde{u}(0) -\Phi A_1 h = \Phi \Phi^* \tilde{y}(0) - \Phi A_1^* h.
\]
\[
\tilde{u}^{\prime\prime}(0) - \tilde{y}^{\prime\prime}(0) = \Phi \Phi^* \sigma u^{\prime}(0) - i A_1^2 h + i A_1 \Phi^* \sigma u(0).
\]
Therefore, a choice of $h \in \cH$ forces three conditions on both $u$ and $y$. However, a standard argument shows that twice continuously differentiable functions, with boundary conditions on them and their derivatives are dense in $L^2(\R_{+},\cE)$ and $L^2(\R_{-},\cE)$.
\end{proof}

This lemma allows us to define for every $t \in \R^d$ an operator on $\cK_0$. Let us denote by $\fy_f$ the weak solution for the system of output compatibility equations with the initial condition $f$ on the $t_1$-axis and by $\Lambda_*(x,y)f$ the associated linear map. Given $(y,h,u) \in \cK_0$, we construct $\tilde{u}$ and $\tilde{y}$. We then apply the transform $\fu$ and $\fy$, respectively, to get (by Theorem \ref{thm:schwartz_summary}) continuously differentiable functions $u^{\dagger} = \fu_{\tilde{u}}$ and $y^{\dagger} = \fy_{\tilde{u}}$ on $\R^d$, that solve the system of input and output compatibility conditions.
Since these functions are continuously differentiable we can solve the associated system of our vessel with initial condition $h$, using \eqref{eq:mult-dim_x} to obtain a twice continuously differentiable function $x$. Using the second equation of the system we obtain an output function $z$, that solves the system of output compatibility equations and coincides with $\tilde{y}$ on the $t_1$-axis, thus by uniqueness $z = y^{\dagger}$. We define:
\[
\rho(t)(y,h,u) = (y^{\dagger}(\cdot,t_2,\ldots,t_d)|_{<t_1},x(t),u^{\dagger}(\cdot,t_2,\ldots,t_d)|_{> t_1}).
\]
Note that it is immediate that $\rho(0)$ is the identity on $\cK_0$.

\begin{rem}
Note that it is possible to use Equation \eqref{eq:mult-dim_x} to construct the state signal for all $t \in \R^d$, since the operators $A_1,\ldots,A_d$ are bounded and thus generate a group. In case these operators were unbounded one could run the adjoint system first to go back and then run the original system to obtain the value of the state signal. One of course would have in that case to show the commutation of the two actions. 
\end{rem}

\begin{lem} \label{lem:rep_is_isometry}
For every $(y,h,u) \in \cK_0$ we have that:
\begin{itemize}
\item For every $t,s \in \R^d$, we have that $\rho(t)\rho(s)(y,h,u) = \rho(t+s)(y,h,u)$. Therefore, $\rho$ is in fact a representation of $\R^d$.

\item For every $t \in \Pos(\fV)$, we have that $\|\rho(t)(y,h,u)\| = \|(y,h,u)\|$ and hence, in particular $\rho(t)$ extends to an isometry on $\cK$.

\item For every $t \in -\Pos(\fV)$, we have that $\|\rho(t) (y,h,u)\| = \|(y,h,u)\|$ and hence $\rho(t)$ is in fact a unitary on $\cK$, for every $t \in \Pos(\fV) \cup - \Pos(\fV)$.

\end{itemize}
\end{lem}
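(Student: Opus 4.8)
The plan is to prove all three assertions on the dense subspace $\cK_0$ of Lemma \ref{lem:k_0_dense}. I would establish the group identity directly on $\cK_0$; the norm identities then show that each $\rho(t)$ with $t\in\Pos(\fV)\cup-\Pos(\fV)$ is bounded (indeed unitary) on $\cK$, which is what upgrades the calculation to a genuine representation. For the first assertion the key is that the passage from a triple to its global data $(u^{\dagger},y^{\dagger},x)$ on $\R^d$ is covariant under translation. Fix $(y,h,u)\in\cK_0$ with global functions $u^{\dagger}=\fu_{\tilde u}$, $y^{\dagger}=\fy_{\tilde y}$ and state $x$, put $s'=(0,s_2,\ldots,s_d)$, and note that $\rho(s)(y,h,u)$ reads $u^{\dagger}$ and $y^{\dagger}$ off the $e_1$–line at height $s'$ and has state $h'=x(s)$. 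First I would show the one–dimensional extension of this new triple produced by Lemma \ref{lem:one_dim_ext} is exactly the restriction of $x$ to that line: the function $r\mapsto x(r+s_1,s')$ solves the forward system with input $u^{\dagger}$ for $r>0$ and, since $(y^{\dagger},x,u^{\dagger})$ is a trajectory of the adjoint system \eqref{eq:adjoint_system}, the adjoint system with input $y^{\dagger}$ for $r<0$, with value $x(s)=h'$ at $r=0$; uniqueness in Lemma \ref{lem:one_dim_ext} forces $\tilde u'=u^{\dagger}(\,\cdot+s_1,s')$ and $\tilde y'=y^{\dagger}(\,\cdot+s_1,s')$ on the $t_1$–axis. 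Applying $\fu,\fy$ and using uniqueness of the weak solution from its $t_1$–axis data (Theorem \ref{thm:schwartz_summary}) together with translation invariance of \eqref{eq:input_compat}, \eqref{eq:output_compat}, I conclude the global functions of $\rho(s)(y,h,u)$ are the $s$–translates of $u^{\dagger},y^{\dagger}$; the analogous uniqueness for \eqref{eq:mult-dim_x} gives $x'=x(\,\cdot+s)$. Reading off the slice at $t$ of these translated functions coincides with reading the slice at $t+s$ of the original ones, i.e.\ with $\rho(t+s)(y,h,u)$.

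For the isometry when $t\in\Pos(\fV)$ I would argue by conservation of total energy. With $\sigma_1=I_{\cE}$ set
\[
E(a)=\int_{-\infty}^{a_1}\|y^{\dagger}(s,a_2,\ldots,a_d)\|^2\,ds+\|x(a)\|^2+\int_{a_1}^{\infty}\|u^{\dagger}(s,a_2,\ldots,a_d)\|^2\,ds,
\]
so that $E(0)=\|(y,h,u)\|^2$ and $E(t)=\|\rho(t)(y,h,u)\|^2$, and I would prove $E$ is constant. Differentiating in $a_1$ reproduces verbatim the computation of Proposition \ref{prop:one_dim_dilation}: the moving cut contributes $\|y^{\dagger}(a)\|^2-\|u^{\dagger}(a)\|^2$, while the energy balance \eqref{eq:energy_balance} along $e_1$ contributes $\|u^{\dagger}(a)\|^2-\|y^{\dagger}(a)\|^2$, and these cancel. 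For a transverse direction $e_k$ the decisive algebraic fact is that the reduced form of the compatibility equations \eqref{eq:input_compat}, \eqref{eq:output_compat} yields $\partial_{t_k}\|u^{\dagger}\|^2=\partial_{t_1}\langle\sigma_k u^{\dagger},u^{\dagger}\rangle$ and $\partial_{t_k}\|y^{\dagger}\|^2=\partial_{t_1}\langle\sigma_k y^{\dagger},y^{\dagger}\rangle$, the imaginary contributions dropping because $\sigma_k$ and the $\gamma$–coefficients are selfadjoint. Integrating these in $s$ turns the $a_k$–derivatives of the two integral terms into the boundary values $\langle\sigma_k y^{\dagger}(a),y^{\dagger}(a)\rangle$ and $-\langle\sigma_k u^{\dagger}(a),u^{\dagger}(a)\rangle$, which cancel exactly against the transverse energy balance $\partial_{t_k}\|x\|^2=\langle\sigma_k u^{\dagger},u^{\dagger}\rangle-\langle\sigma_k y^{\dagger},y^{\dagger}\rangle$. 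Hence $\nabla E\equiv 0$ and $E(t)=E(0)$.

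I expect the main obstacle to be the analytic justification hidden in this computation rather than the algebra: the half–line integrals must converge, differentiation under the integral sign must be licit, and the boundary terms at $\pm\infty$ must vanish. This is where the cone is used. Since $e_1\in\Pos(\fV)$, Theorem \ref{thm:schwartz_summary} identifies the restriction of $u^{\dagger}$ (resp.\ $y^{\dagger}$) to the $e_1$–line at height $a'$ with $\Lambda(e_1,a')\tilde u$ (resp.\ its output analogue), and the causal isometry \eqref{eq:causal_isometry} controls these in $L^2(\R,\cE,\alpha(e_1))=\Ltwoe$ uniformly in $a'$; on $\cK_0$ the functions are moreover $C^1$ with $L^2$ derivative along $e_1$ (again by Theorem \ref{thm:schwartz_summary} and Remark \ref{rem:u_intertwines}), so they tend to $0$ at $\pm\infty$ and the boundary terms drop out. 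It is precisely this control that ties the clean algebra to directions governed by the future (and, symmetrically, the past) cone.

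Finally, the third assertion and the unitarity follow formally. The energy computation is insensitive to the sign of the direction, so the identical argument with the roles of the input/forward system and the output/adjoint system interchanged gives $\|\rho(t)(y,h,u)\|=\|(y,h,u)\|$ for $t\in-\Pos(\fV)$; extending by density (Lemma \ref{lem:k_0_dense}) shows $\rho(t)$ is a well–defined isometry of $\cK$ for every $t\in\Pos(\fV)\cup-\Pos(\fV)$. By the representation property proved first, $\rho(t)\rho(-t)=\rho(0)=I$, so each such $\rho(t)$ is a surjective isometry, that is, unitary.
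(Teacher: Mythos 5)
Your proof is correct, and while your treatment of the first and third bullets mirrors the paper's (the group law is the paper's terse appeal to uniqueness of $u^{\dagger},y^{\dagger}$ from their restrictions to lines plus the fact that restriction commutes with shifts — your expansion via Lemma \ref{lem:one_dim_ext} and the adjoint system just makes that explicit — and unitarity on $-\Pos(\fV)$ is again obtained by swapping input and output and using $\rho(t)\rho(-t)=\rho(0)$), your proof of the isometry is genuinely different. The paper argues integrally: it applies the energy balance \eqref{eq:energy_balance} along the segment from $0$ to $t$ with $t\in\Pos(\fV)$, then uses the causal isometric isomorphisms $\Lambda(t,t)\Lambda(e_1,t)^*$ and $\Lambda_*(t,t)\Lambda_*(e_1,t)^*$ of Theorem \ref{thm:schwartz_summary} to trade half-line energies in direction $t$ for the $L^2$ norms of the components of $\rho(t)(y,h,u)$ along the line through $t$ parallel to $e_1$; no differentiation under integral signs occurs, since the causality machinery absorbs all the analysis. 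You instead prove a differential conservation law, $\nabla E\equiv 0$, from the pointwise identity $\partial_{t_k}\|u^{\dagger}\|^2=\partial_{t_1}\langle\sigma_k u^{\dagger},u^{\dagger}\rangle$ (which does follow from the reduced equation $\partial_{t_k}u^{\dagger}=\sigma_k\partial_{t_1}u^{\dagger}+i\gamma_{1k}u^{\dagger}$ and selfadjointness of $\sigma_k,\gamma_{1k}$) together with the state energy balance. This buys two things: it needs only the full-line isometry along $e_1$-parallel lines (unitarity of $\pi$, Remark \ref{rem:u_intertwines}) rather than the two-direction causal comparisons \eqref{eq:causal_isometry}, and it in fact yields $\|\rho(t)(y,h,u)\|=\|(y,h,u)\|$ for \emph{every} $t\in\R^d$ at once, not only for $t\in\pm\Pos(\fV)$. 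The price is the analytic overhead you flag, but it is dischargeable with exactly the tools you cite: write the transverse increment of $\int_{a_1}^{\infty}\|u^{\dagger}(s,a')\|^2\,ds$ as a double integral of $\partial_s\langle\sigma_k u^{\dagger},u^{\dagger}\rangle$, justify Fubini by the bound $\int_{-\infty}^{\infty}|\partial_s\langle\sigma_k u^{\dagger},u^{\dagger}\rangle|\,ds\leq 2\|\sigma_k\|\,\|\tilde{u}'\|_{L^2}\|\tilde{u}\|_{L^2}$, which is uniform in the height of the line by the isometry, and use that a $C^1$ function lying in $L^2$ together with its derivative tends to $0$ at $\pm\infty$ to kill the boundary terms. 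So there is no gap, only routine detail left to fill in.
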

\begin{proof}
The first claim follows from the uniqueness part of Theorem \ref{thm:schwartz_summary}. Namely, since $u^{\dagger}$ and $y^{\dagger}$ were determined uniquely by their restriction to any line with direction vector in $\Pos(\fV)$ and additionally restriction commutes with shifts, the claim follows.

For $t \in \R^d$ we write $\Lambda(t) = \Lambda(t,0)$ and $\sigma(t) = \sum_{j=1}^d t_j \sigma_j$. To prove the second claim we apply \eqref{eq:energy_balance} (modified to the straight line segment from $0$ to $t$) to get that for every $t \in \Pos(\fV)$ we have:
\begin{multline*}
\|x(t)\|^2 = \|h\|^2 + \int_0^1 \langle \sigma(t) (\Lambda(t)\tilde{u})(w) , (\Lambda(t)\tilde{u})(w) \rangle dw - \\ \int_0^1 \langle \sigma(t) (\Lambda_*(t)\tilde{y})(w) ,  (\Lambda_*(t)\tilde{y})(w) \rangle dw
\end{multline*}
(note that by Theorem \ref{thm:schwartz_summary} we have that $\Lambda(t)(\tilde{u})(w) = u^{\dagger}(t w)$ and $\Lambda_*(t)(\tilde{y})(w) = y^{\dagger}(t w)$). Let us write $u_t = \Lambda(t)(\tilde{u})$ and $y_t = \Lambda_*(t)(\tilde{y})$. Applying Theorem \ref{thm:schwartz_summary} again we obtain:
\begin{align*}
\begin{split}
& \int_0^{\infty} \langle \sigma(t) u_t(w), u_t(w) \rangle dw = \int_0^{\infty} \langle u(w), u(w) \rangle dw, \\ & \int_{-\infty}^0 \langle \sigma(t) y_t(w), y_t(w) \rangle dw = \int_{-\infty}^0 \langle y(w), y(w) \rangle dw.
\end{split}
\end{align*}
We now compute:
\begin{multline*}
\|(y,h,u)\|^2 = \|h\|^2 + \int_0^{\infty} \langle u(w), u(w) \rangle dw + \int_{-\infty}^0 \langle y(w), y(w) \rangle dw = \\ \|x(t)\|^2 -  \int_0^1 \langle \sigma(t) u_t(w) , u_t(w) \rangle dw + \int_0^1 \langle \sigma(t) y_t(w) ,  y_t(w) \rangle dw + \\ \int_0^{\infty} \langle \sigma(t) u_t(w), u_t(w) \rangle dw + \int_{-\infty}^0 \langle \sigma(t) y_t(w), y_t(w) \rangle dw  = \\ \|x(t)\|^2+ \int_1^{\infty} \langle \sigma(t) u_t(w) , u_t(w) \rangle dw + \int_{-\infty}^1 \langle \sigma(t) y_t(w), y_t(w) \rangle dw = \\
\|x(t)\|^2 + \int_0^{\infty} \langle \sigma(t) (\Lambda(t,t) \tilde{u})(w), (\Lambda(t,t) \tilde{u})(w) \rangle dw + \\ \int_{-\infty}^0 \langle \sigma(t) (\Lambda_*(t,t) \tilde{y})(w), (\Lambda_*(t,t) \tilde{y})(w) \rangle dw.
\end{multline*}
Another application of Theorem \ref{thm:schwartz_summary} gives us that both $\Lambda(t,t) \Lambda(e_1,t)^*$ and $\Lambda_*(t,t) \Lambda_*(e_1,t)^*$ are causal isometric isomorphisms and hence:
\[
\|(y,h,u)\| = \|\rho(t)(y,h,u)\|.
\]

The third claim is identical to the second but we exchange the roles of $u$ and $y$. Since for $t\in \Pos(\fV) \cup -\Pos(\fV)$ we have that $\rho(t)\rho(-t) = \rho(-t)\rho(t) = 1$ by the first part of the lemma, we conclude that $\rho(t)$ is a surjective isometry and hence a unitary and that $\rho(-t) = \rho(t)^*$.

\end{proof}

\begin{lem} \label{lem:weak_trajectory}
For every $(y,h,u) \in \cK$, the following function is well defined for every $t \in \R^d$, $x(t) = P_{\cH} \rho(t)(y,h,u)$. Furthermore, $x$ is continuous and absolutely continuous on lines with a direction vector $\xi \in \Pos(\fV)$ and we have the equality:
\begin{equation} \label{eq:general_x_on_lines}
x(\xi s + \eta) = e^{i (\xi s +\eta) A}\left( h + i \int_0^s e^{- i (\xi w + \eta) A} \Phi^* \sigma(\xi) (\Lambda(\xi)\tilde{u})(w) dw \right).
\end{equation}
And thus for every line $\xi s + \eta$, with $\xi \in \Pos(\fV)$, we have:
\[
\dfrac{dx}{ds} = i \xi A x - i \Phi^* \sigma(\xi) (\Lambda(\xi,\eta) \tilde{u}).
\]
\end{lem}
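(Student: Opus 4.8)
The plan is to first reduce the well-definedness of $x(t) = P_{\cH}\rho(t)(y,h,u)$ for arbitrary $(y,h,u) \in \cK$ to the statement that $\rho(t)$ is a unitary on $\cK$ for \emph{every} $t \in \R^d$. Lemma \ref{lem:rep_is_isometry} only gives this for $t \in \Pos(\fV) \cup -\Pos(\fV)$, so I would first extend it: since $\Pos(\fV)$ is a nonempty open convex cone (Definition \ref{dfn:positive_cone}), the difference set $\Pos(\fV) - \Pos(\fV)$ contains an open ball about the origin and is itself a convex cone, hence equals all of $\R^d$. Thus any $t$ can be written $t = a - b$ with $a,b \in \Pos(\fV)$, and one sets $\rho(t) = \rho(a)\rho(b)^*$; the group law on the dense subspace $\cK_0$ (first bullet of Lemma \ref{lem:rep_is_isometry}) shows this is independent of the decomposition and is unitary. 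Consequently $x(t)$ is a well-defined vector in $\cH$ for all $(y,h,u) \in \cK$. Continuity of $x$ on $\R^d$ then follows once $\rho$ is seen to be strongly continuous; I would verify strong continuity at the origin on the dense set $\cK_0$ — where $u^{\dagger}=\fu_{\tilde u}$, $y^{\dagger}=\fy_{\tilde y}$ and $x$ are smooth and $\Lambda(\cdot,\cdot)$ depends continuously on its parameters — and propagate it to all of $\cK$ using the uniform bound $\|\rho(t)\| = 1$ together with the group property.

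Next I would establish the integral formula \eqref{eq:general_x_on_lines} and the differential equation on the dense subspace $\cK_0$, where everything is classical. For $(y,h,u) \in \cK_0$ the middle component $x(t) = P_{\cH}\rho(t)(y,h,u)$ is, by the definition of $\rho$ on $\cK_0$, the $C^2$ state obtained by feeding the input $u^{\dagger} = \fu_{\tilde u}$ into \eqref{eq:mult-dim_x} with initial condition $h$ (this makes sense for all $t \in \R^d$ since the bounded $A_j$ generate a group). Restricting the state equation $\partial x/\partial t_k = i A_k x - i \Phi^* \sigma_k u^{\dagger}$ to the line $\xi s + \eta$ and applying the chain rule gives $\tfrac{d}{ds} x(\xi s + \eta) = i (\xi A) x - i \Phi^* \sigma(\xi)\, u^{\dagger}(\xi s + \eta)$, and Theorem \ref{thm:schwartz_summary} identifies $u^{\dagger}(\xi s + \eta) = (\Lambda(\xi,\eta)\tilde u)(s)$, which is exactly the claimed ODE. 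Solving this linear $\cH$-valued ODE by variation of parameters (the exponentials combining because the $A_j$ commute) yields the Duhamel representation \eqref{eq:general_x_on_lines}.

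Finally I would pass from $\cK_0$ to all of $\cK$ by a density-and-continuity argument applied to both sides of \eqref{eq:general_x_on_lines}. For arbitrary $(y,h,u)\in\cK$ the extension $\tilde u \in \Ltwoe$ exists by Lemma \ref{lem:one_dim_ext} and depends boundedly, hence continuously, on $(y,h,u)$, being given by the explicit linear formulas there together with the energy estimate. By the second bullet of Theorem \ref{thm:schwartz_summary} the operator $\Lambda(\xi,\eta)$ is bounded from $\Ltwoe$ into $L^2(\R,\cE,\alpha(\xi))$, so $\Lambda(\xi,\eta)\tilde u$ lies in $L^2$ and a fortiori in $L^1_{\mathrm{loc}}$. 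Hence the right-hand side of \eqref{eq:general_x_on_lines} is absolutely continuous in $s$ — being $e^{i(\xi s + \eta)A}$ applied to $h$ plus a Duhamel integral of an $L^1_{\mathrm{loc}}$ function — and is continuous in $(y,h,u)$, while the left-hand side is continuous in $(y,h,u)$ by unitarity of $\rho(t)$. Since the two sides agree on the dense set $\cK_0$, they agree on all of $\cK$; differentiating the integral formula (valid for a.e.\ $s$) recovers the stated ODE for general data, and joint continuity of $x$ on $\R^d$ is already in hand from the first step.

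The main obstacle is precisely this last passage to rough $L^2$ data: when $\tilde u$ is merely square-integrable the state $x$ is no longer classically differentiable, so the ODE can only be read off almost everywhere from the Duhamel integral, and one must lean on the boundedness of $\Lambda(\xi,\eta)$ and the energy balance \eqref{eq:energy_balance} (in the locally integrable form noted after its proof) rather than on pointwise manipulation of the overdetermined system. Establishing joint continuity of $x$ on $\R^d$, as opposed to continuity only along directions in $\Pos(\fV)$, similarly forces one to prove strong continuity of the entire unitary representation $\rho$ rather than just its restriction to the cone.
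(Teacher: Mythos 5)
Your proposal is correct and follows essentially the same route as the paper: verify \eqref{eq:general_x_on_lines} classically on the dense subspace $\cK_0$ (where the state is given by \eqref{eq:mult-dim_x} and Theorem \ref{thm:schwartz_summary} identifies restrictions to lines with $\Lambda(\xi,\eta)\tilde{u}$), then pass to general $(y,h,u)\in\cK$ by density, using contractivity of $P_{\cH}\rho(t)$ and the isometry property of $\Lambda(\xi,\eta)$ from $\Ltwoe$ into $L^2(\R,\cE,\alpha(\xi))$ to control both sides, with absolute continuity read off from the Duhamel integral. The only deviations are cosmetic: you make explicit the unitary extension of $\rho$ to all of $\R^d$ via $\Pos(\fV)-\Pos(\fV)=\R^d$ (which the paper leaves implicit until the proof of Theorem \ref{thm:pos_dilation}), and you obtain continuity of $x$ from strong continuity of the whole representation $\rho$, where the paper uses a lighter $\epsilon/3$ approximation argument on the middle component alone.
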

\begin{proof}
For every $\epsilon > 0$ we choose $(y_0,h,u_0) \in \cK_0$, such that $\|(y,h,u) - (y_0,h,u_0)\| < \epsilon$. Then for every $t \in \R^d$ we get $\|x_{0}(t) - P_{\cH}\rho(t)(y,h,u)\| < \epsilon$, since $P_{\cH} \rho(t)$ is a contraction. Since $x_0$ is continuous a standard $\epsilon/3$ argument shows that $x$ is continuous.

If we prove that $x(\xi s + \eta)$ has the form described in \eqref{eq:general_x_on_lines}, then we immediately see that $x$ is absolutely continuous on those lines. Let us fix $\xi \in \Pos(\fV)$, then:
\[
 x_0(\xi s) = e^{i \xi A} h + i\int_{0}^{s} e^{i (\xi - w \xi) A} \Phi^* \sigma(\xi) (\Lambda(\xi)(\tilde{u_0})(w) dw.
\]
We note that for every $0 \leq w \leq 1$, we have $1 - w \geq 0$ and thus $e^{i (\xi - w \xi) A}$ is a contraction. Hence:
\begin{multline*}
x_0(\xi s) - x(\xi s) = \left\| \int_{0}^{s} e^{i (\xi - w \xi) A} \Phi^* \sigma(\xi) (\Lambda(\xi)(\tilde{u_0} - \tilde{u})(w) dw \right\| \leq \\ C  \int_0^1 \| \sigma(\xi) (\Lambda(\xi)(\tilde{u_0} - \tilde{u})(w) \| dw \leq C \sqrt{\int_0^1 \| \sigma_t (\Lambda(\xi)(\tilde{u_0} - \tilde{u})(w)\|^2 dw}.
\end{multline*}
Now using functional calculus we note that there is a constant $C^{\prime}$, such that for every $\xi \in \cE$ the inequality $\|\sigma(t) \xi\|^2 \leq C^{\prime} \langle \sigma(t) \xi, \xi \rangle$ holds. We thus conclude that:
\begin{multline*}
\left\| \int_{0}^{s} e^{i (\xi - w \xi) A} \Phi^* \sigma(\xi) (\Lambda(\xi)(\tilde{u_0} - \tilde{u})(w) dw \right\| \\ \leq C \sqrt{C^{\prime}} \sqrt{\int_0^{\infty} \langle \sigma(\xi) (\Lambda(\xi)(\tilde{u_0} - \tilde{u})(w), (\Lambda(\xi)(\tilde{u_0} - \tilde{u})(w) \rangle dw} < C \sqrt{C^{\prime}} \sqrt{\epsilon}.
\end{multline*}
Letting $\epsilon$ tend to $0$ we get the desired result.

\end{proof}

\begin{rem}
For every line $\xi s + \eta$, with $\xi \in \Pos(\fV)$, we have that:
\[
(\Lambda(\xi,\eta)\tilde{y})(s) = (\Lambda(\xi,\eta)(\tilde{u})(s) + i\Phi x(\xi s + \eta).
\]
\end{rem}

\begin{proof}[Proof of Theorem \ref{thm:dissipative_embedding_dilation}]
First we note that by Lemma \ref{lem:rep_is_isometry} and the fact that $\Pos(\fV)$ spans $\R^d$ we get that $\rho$ is a unitary representation of $\R^d$ on $\cK$.

Now we need to check that if $P_{\cH} \colon \cK \to \cH$ is the orthogonal projection, then $P_{\cH} \rho(t) (0,h,0) = e^{i t A}h$, for every $h \in \cH$. This, however, follows immediately from Lemma \ref{lem:weak_trajectory}.
\end{proof}

Lastly, we discuss the minimality of the unitary dilation that we have constructed. Recall that the dilation $\rho$ is minimal if $\overline{\Span\{\rho(t)\cH \mid t \in \R^d\}} = \cK$ or equivalently that there exists no $\rho$-invariant subspace of $\cH^{\perp}$. For simplicity we shall assume that $A_1,\ldots,A_d$ have the dissipative embedding property.

\begin{lem} \label{lem:injective}
Let $\cX$ and $\cY$ be Hilbert spaces and $A \colon \cX \to \cY$ be an injective bounded linear operator. Then the induced linear map $A \colon \cS^{\prime}(\R^d,\cX) \to \cS^{\prime}(\R^d,\cY)$ is injective.
\end{lem}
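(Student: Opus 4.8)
The plan is to reduce the statement to the elementary Hilbert-space fact that an injective operator has an adjoint with dense range. First I would pin down the induced map. Using the identification $\cS'(\R^d,\cX) \cong \cS'(\R^d) \widehat{\otimes} \cX$ recorded earlier, the induced map $A$ sends an elementary tensor $T \otimes x$ (with $T \in \cS'(\R^d)$, $x \in \cX$) to $T \otimes Ax$; equivalently, it is the transpose of the pointwise map $\cS(\R^d,\cY) \to \cS(\R^d,\cX)$ given by $\psi \mapsto A^*\psi$, where $A^* \colon \cY \to \cX$ is the Hilbert-space adjoint. Concretely, for $f \in \cS'(\R^d,\cX)$ and $\psi \in \cS(\R^d,\cY)$ one has $\langle Af, \psi\rangle = \langle f, A^*\psi\rangle$, and testing against elementary tensors $\phi \otimes y$ with $\phi \in \cS(\R^d)$, $y \in \cY$ gives $\langle Af, \phi\otimes y\rangle = \langle f, \phi \otimes A^*y\rangle$ (the inner products on $\cX$ and $\cY$ reconcile the two sides via $\langle Ax,y\rangle_\cY = \langle x, A^*y\rangle_\cX$, so the anti-linearity of the pairing is immaterial).

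Next I would exploit injectivity. Suppose $Af = 0$. Then $\langle f, \phi \otimes A^*y\rangle = 0$ for every $\phi \in \cS(\R^d)$ and every $y \in \cY$. As $y$ ranges over $\cY$, the vectors $A^*y$ range over $\im A^*$, and since $A$ is injective we have $\overline{\im A^*} = (\ker A)^\perp = \cX$. Consequently the family $\{\phi \otimes A^*y \mid \phi \in \cS(\R^d),\ y \in \cY\}$ has dense linear span in $\cS(\R^d)\widehat{\otimes}\cX \cong \cS(\R^d,\cX)$: fixing $\phi$ and approximating an arbitrary $x \in \cX$ by a sequence $A^*y_n \to x$, continuity of the tensor map yields $\phi \otimes A^*y_n \to \phi\otimes x$, and finite sums of elementary tensors $\phi \otimes x$ are by definition dense in the completed tensor product. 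Since $f$ is continuous and vanishes on this dense subspace, $f = 0$, which is the desired injectivity.

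The argument is short once the framework is fixed; the only points requiring care are the precise description of the induced map as the transpose of the pointwise $A^*$-action (so that scalar test functions "commute past" $A$) and the density of the algebraic tensor product $\cS(\R^d)\otimes \cX$ in $\cS(\R^d,\cX)$, which is exactly the content of the completed tensor product identification cited earlier. I expect no genuine obstacle beyond this bookkeeping: the heart of the matter is simply the equivalence between $A$ being injective and $\im A^*$ being dense in the Hilbert space $\cX$.
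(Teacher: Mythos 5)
Your proposal is correct and follows essentially the same route as the paper's proof: both reduce to elementary tensors via $\cS(\R^d,\cX)\cong\cS(\R^d)\widehat{\otimes}\cX$, use that injectivity of $A$ makes $\im A^*$ dense in $\cX$, and conclude by continuity of the distribution on a dense subspace. The only difference is presentational (you spell out the transpose description $\langle Af,\psi\rangle=\langle f,A^*\psi\rangle$ up front, which the paper leaves implicit).
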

\begin{proof}
We need to show that for every $\varphi \in \cS^{\prime}(\R^d,\cX)$, such that $A \varphi = 0$ we have that for every $f \in \cS(\R^d,\cX)$, $\langle \varphi, f \rangle = 0$. Since $\cS(\R^d,\cX) \cong \cS(\R^d) \widehat{\otimes} \cX$ if we show the claim for elementary tensors we will be done, since their span is dense. So given a function $f \in \cS(\R^d)$ and $v \in \cX$, we consider the function $f(t) v$. If $v \in \im A^*$, then $v = A^* w$ and we get:
\[
\langle \varphi, f(t) v \rangle = \langle A \varphi , f(t) w \rangle = 0.
\]
Since $\im A^*$ is dense, for every $v \in \cX$, we can choose a sequence $v_n \in \im A^*$, that converges to $v$. Thus the sequence $f(t) v_n$ will converge in $\cS(\R^d,\cX)$ to $f(t) v$. By continuity of $\varphi$ we have that $\langle \varphi, f(t) v \rangle = 0$ for every $v \in \cX$ and we are done.

\end{proof}

\begin{lem} \label{prop:weakly_strict_to_minimal}
The dilation obtained above is minimal if $\fV$ is weakly strict.
\end{lem}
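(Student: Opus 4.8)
The plan is to show that the orthogonal complement of $\overline{\Span\{\rho(t)\cH \mid t\in\R^d\}}$ in $\cK$ is trivial. Let $\kappa\in\cK$ be orthogonal to every $\rho(t)\cH$; taking $t=0$ forces $\kappa\in\cH^{\perp}$, so $\kappa=(y,0,u)$. Since $\rho$ is a unitary representation of $\R^d$ (Lemma \ref{lem:rep_is_isometry} together with the fact that $\Pos(\fV)$ spans $\R^d$), we have $\rho(t)^{*}=\rho(-t)$, so the orthogonality condition becomes $\langle P_{\cH}\rho(s)\kappa, h\rangle=0$ for all $s\in\R^d$ and $h\in\cH$; equivalently, the state trajectory $x(s)=P_{\cH}\rho(s)\kappa$ supplied by Lemma \ref{lem:weak_trajectory} vanishes identically. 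Thus minimality is equivalent to the implication that a vanishing state trajectory forces $\kappa=0$, and this is what I would establish using weak strictness.

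To exploit the vanishing of $x$, I would fix an arbitrary $\xi\in\Pos(\fV)$ and use the differential identity from Lemma \ref{lem:weak_trajectory}, namely $\tfrac{dx}{ds}=i\,\xi A\,x-i\Phi^{*}\sigma(\xi)(\Lambda(\xi,\eta)\tilde u)$. With $x\equiv 0$ both terms on the left and the first term on the right drop out, leaving $\Phi^{*}\sigma(\xi)(\Lambda(\xi,\eta)\tilde u)(s)=0$ for a.e.\ $s$ and every $\eta$. By the line–integral representation of $\fu_{\tilde u}$ in the last item of Theorem \ref{thm:schwartz_summary}, this is exactly the statement that the tempered $\cE$-valued distribution $\Phi^{*}\sigma(\xi)\fu_{\tilde u}$ on $\R^d$ vanishes, for each $\xi\in\Pos(\fV)$. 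Because $\Phi^{*}\sigma(\xi)\fu_{\tilde u}=\sum_{j}\xi_{j}\,\Phi^{*}\sigma_{j}\fu_{\tilde u}$ depends linearly on $\xi$ and $\Pos(\fV)$ is a nonempty open cone (Definition \ref{dfn:positive_cone}), hence spans $\R^d$, I conclude $\Phi^{*}\sigma_{j}\fu_{\tilde u}=0$ for every $j=1,\ldots,d$.

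This is where weak strictness is used. The operator $v\mapsto(\Phi^{*}\sigma_{1}v,\ldots,\Phi^{*}\sigma_{d}v)$ on $\cE$ has kernel $\cap_{j=1}^{d}\ker\Phi^{*}\sigma_{j}$, which is $0$ precisely because $\fV$ is weakly strict; so it is injective, and Lemma \ref{lem:injective} upgrades it to an injective map on $\cS^{\prime}(\R^d,\cE)$. Applying this to $\fu_{\tilde u}$ gives $\fu_{\tilde u}=0$, so its line–integral representation forces $\Lambda(\xi,\eta)\tilde u=0$, and since each $\Lambda(\xi,\eta)$ is an isometric isomorphism (Theorem \ref{thm:schwartz_summary}) we get $\tilde u=0$, whence $u=\tilde u|_{\R_{>0}}=0$. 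Finally, with $x\equiv 0$ the Remark following Lemma \ref{lem:weak_trajectory} shows that the transform of $\tilde y$ along each line equals that of $\tilde u$ plus $i\Phi x$, hence equals $0$; the isometry property again yields $\tilde y=0$ and $y=\tilde y|_{\R_{<0}}=0$. Therefore $\kappa=0$ and the dilation is minimal.

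I expect the main obstacle to be the bookkeeping in the middle step: converting the a.e.\ vanishing of $\Phi^{*}\sigma(\xi)\Lambda(\xi,\eta)\tilde u$ along all lines into the single clean distributional identity $\Phi^{*}\sigma(\xi)\fu_{\tilde u}=0$ to which Lemma \ref{lem:injective} applies, and justifying the recovery of $\tilde u$ from $\fu_{\tilde u}=0$ for a general $L^2$ datum rather than one in the dense subspace $\cK_0$. Everything else is a formal consequence of the unitarity of $\rho$ and the structural results of Section \ref{sec:cont_solve}.
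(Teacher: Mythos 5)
Your proposal is correct, and its skeleton is the same as the paper's: you pass to the $\rho$-invariant complement of $\overline{\Span\{\rho(t)\cH\}}$, note that its elements $(y,0,u)$ generate an identically vanishing state, deduce $\Phi^{*}\sigma(\xi)(\Lambda(\xi,\eta)\tilde u)=0$ a.e.\ on lines from Lemma \ref{lem:weak_trajectory}, convert this into the distributional identity $\Phi^{*}\sigma_{j}\fu_{\tilde u}=0$ for all $j$, and invoke weak strictness together with Lemma \ref{lem:injective} to conclude $\fu_{\tilde u}=0$; the treatment of $y$ via $\Lambda_{*}$ and the system equation is likewise the paper's. The one place you genuinely diverge is the recovery of $\tilde u=0$ from $\fu_{\tilde u}=0$: the paper evaluates the smooth distribution-valued map $L_{\tilde u}$ at $t'=0$, where it equals $\tilde u$, and derives a contradiction by pairing against a test function with $\langle\tilde u,h\rangle\neq 0$ and integrating against a bump function supported near $t'=0$; you instead feed $\fu_{\tilde u}=0$ back into the line-integral representation of Theorem \ref{thm:schwartz_summary}, conclude that $(s,\eta)\mapsto(\Lambda(\xi,\eta)\tilde u)(s)$ vanishes a.e.\ (this is the du Bois--Reymond step you flagged, legitimate because the uniform isometry bound $\|\Lambda(\xi,\eta)\tilde u\|_{\Ltwoex}=\|\tilde u\|$ makes that function locally integrable on $\R^{d}$), and then use injectivity of a single $\Lambda(\xi,\eta)$. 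Both routes are sound; the paper's avoids the joint-measurability/local-integrability bookkeeping entirely by exploiting continuity of $L_{\tilde u}$ at a single point, while yours has the advantage of reusing only the quantitative statements of Theorem \ref{thm:schwartz_summary} (isometry and the line-integral formula) rather than the smoothness of the representation, so it would survive in settings where $f\mapsto L_{f}$ is merely continuous.
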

\begin{proof}
Let us consider the subspace $\cL = \overline{\Span\{\rho(t)\cH \mid t \in \R^d\}} \subset \cK$ and assume the vessel $\fV$ is weakly strict. Consider the orthogonal complement $\cL^{\perp}$ of $\cL$. Since $\rho$ is unitary and $\cL$ is $\rho$ invariant, we have that $\cL^{\perp}$ is $\rho$-invariant as well. Note that every vector in $\cL^{\perp}$ is of the form $(y,0,u)$ and by invariance we have that the state function $x$ we generate is identically $0$. From Lemma \ref{lem:weak_trajectory} we get that for $\xi \in \Pos(\fV)$  $\Phi^* \sigma(\xi) u(t) = 0$ almost everywhere on every line in direction $\xi$. By Theorem \ref{thm:schwartz_summary} for every $\psi \in \cS(\R^d,\cE)$ we have:
\[
\langle \Phi^* \sigma(\xi) \fu_{\tilde{u}}, \psi \rangle = \int_{\xi^{\perp}} \int_{-\infty}^{\infty} \langle \Phi^* \sigma(\xi) (\Lambda(\xi,\eta) \tilde{u}(s), \psi(s \xi + \eta) \rangle ds d\eta = 0.
\]
Now we note that since $\fV$ is weakly strict we have that $\cap_{\xi \in \Pos(\fV)} \ker \Phi^* \sigma(\xi) = 0$. Thus if we consider the operator $(\Phi^* \sigma_1,\ldots,\Phi^* \sigma_d) \colon \cE \to \cE^d$, then by Lemma \ref{lem:injective} it is injective and we conclude that $\fu_{\tilde{u}} = 0$. We need now only to deduce that $\tilde{u} = 0$. Let now $g \in \cS(\R,\cE)$ and $h \in \cS(\R^{d-1})$. We write $t^{\prime} = (t_2,\ldots,t_d)$ and define a function $f(t_1,\ldots,t_d) = g(t_1) h(t^{\prime}) \in \cS(\R^d,\cE)$ and we get:
\[
0 = \langle \fu_{\tilde{u}}, f \rangle = \int_{\R^{d-1}} g(t^{\prime}) \langle L_{\tilde{u}}(t^{\prime}), h \rangle d t^{\prime}.
\]
Now we assume that $\tilde{u} \neq 0$, ,then there exists $h \in \cS(\R,\cE)$, such that $\langle \tilde{u}, h \rangle \neq 0$. Since $L_{\tilde{u}}$ is a smooth function there exists a $\delta > 0$, such that for every $t^{\prime} \in \R^{d-1}$ if $\|t^{\prime}\| < \delta$, then without loss of generality $\operatorname{Re}\langle L_{\tilde{u}}(t^{\prime}), h \rangle > 0$. Furthermore, we can choose $g(t^{\prime})$ to be a bump function that is $1$ on the ball of radius $\delta/2$ and is zero outside of a ball of radius $\delta$ and is always positive. Thus:
\[
\operatorname{Re} \int_{\R^{d-1}} g(t^{\prime}) \langle L_{\tilde{u}}(t^{\prime}), h \rangle d t^{\prime} = \int_{B_{\delta}} g(t^{\prime}) \operatorname{Re} \langle L_{\tilde{u}}(t^{\prime}), h \rangle d t^{\prime} > 0. 
\]
This is a contradiction and thus $\tilde{u} = 0$. Since $x$ we can conclude that $y = 0$ and we are done.
\end{proof}

This condition is sufficient, but we can also get a necessary condition. To this end we need the following simple lemma:

\begin{lem} \label{lem:invariant_subspace}
Let $\cE$ be a Hilbert space and $W \subset \cE$ a closed subspace. Let $\alpha_j$ and $\beta_j$, $j=1,\ldots,d$ be operators on $\cE$, such that for every $s \in \C$, and every $1 \leq j < k \leq d$ we have:
\[
[ s \alpha_j + \beta_j, s \alpha_k + \beta_k] = 0.
\]
Then the following are equivalent:
\begin{enumerate}[(i)]
\item For every $j = 1,\ldots,d$ we have that $\alpha_j W \subset W$ and $\beta_j W \subset W$,

\item For every polynomial $p \in \C[z_1,\ldots,z_d]$ and every $s \in \C$, we have that $p(s \alpha_1 + \beta_1, \ldots, s \alpha_d + \beta_d) W \subset W$,

\item For every polynomial $p \in \C[z_1,\ldots,z_d]$ and \emph{almost} every $s \in \C$, we have that $p(s \alpha_1 + \beta_1, \ldots, s \alpha_d + \beta_d) W \subset W$,
\end{enumerate}
\end{lem}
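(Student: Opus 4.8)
The plan is to prove the cyclic chain of implications $(i) \Rightarrow (ii) \Rightarrow (iii) \Rightarrow (i)$. The first two links are soft, and essentially all of the content sits in the last one.

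For $(i) \Rightarrow (ii)$ I would first note that the commutation hypothesis guarantees that for each fixed $s \in \C$ the operators $s\alpha_1 + \beta_1, \ldots, s\alpha_d + \beta_d$ pairwise commute, so that the substitution $z_j \mapsto s\alpha_j + \beta_j$ makes $p(s\alpha_1 + \beta_1, \ldots, s\alpha_d + \beta_d)$ a well-defined single operator. Assuming $(i)$, each $s\alpha_j + \beta_j$ maps $W$ into $W$; hence so does every monomial $\prod_{j} (s\alpha_j + \beta_j)^{n_j}$, being a composition of operators preserving $W$, and by linearity every $p(\ldots)$ preserves $W$. The implication $(ii) \Rightarrow (iii)$ is immediate, since a property holding for \emph{every} $s$ holds in particular for almost every $s$.

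The crux is $(iii) \Rightarrow (i)$. Here I would apply $(iii)$ to the single polynomial $p = z_j$: this yields a set $S_j \subset \C$ of full Lebesgue measure with $(s\alpha_j + \beta_j)W \subset W$ for all $s \in S_j$. Since the complement of $S_j$ has measure zero, $S_j$ is dense in $\C$. Now fix $w \in W$ and consider the map $g(s) = (s\alpha_j + \beta_j)w = s(\alpha_j w) + \beta_j w$, which is affine, hence continuous, in $s$. It takes values in $W$ on the dense set $S_j$, and because $W$ is closed I conclude $g(s) \in W$ for \emph{every} $s \in \C$: for any $s_0$ pick $s_n \in S_j$ with $s_n \to s_0$ and pass to the limit in $g(s_n) \to g(s_0)$. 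Thus $(s\alpha_j + \beta_j)W \subset W$ for all $s$ and all $j$. Setting $s = 0$ gives $\beta_j W \subset W$; subtracting this from $(s\alpha_j + \beta_j)w \in W$ for any $s \neq 0$ gives $s\,\alpha_j w \in W$, so $\alpha_j W \subset W$. This is exactly $(i)$.

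The only genuine obstacle is the passage from ``almost every $s$'' to ``every $s$'' in $(iii) \Rightarrow (i)$; it rests on the elementary but essential fact that a full-measure subset of $\C$ is dense, together with the continuity in $s$ of $s \mapsto (s\alpha_j + \beta_j)w$ and the closedness of $W$. Once invariance under the whole pencil $s\alpha_j + \beta_j$ is secured for all $s$, separating out $\alpha_j$ and $\beta_j$ is a one-line affine argument.
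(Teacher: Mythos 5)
Your proof is correct and takes essentially the same approach as the paper: the paper likewise reduces the whole lemma to the observation that $s \mapsto p(s\alpha_1+\beta_1,\ldots,s\alpha_d+\beta_d)\xi$ is continuous in $s$, so that (after composing with the projection onto $\cE/W$, which uses the closedness of $W$ exactly as your density argument does) a function vanishing for almost every $s$ vanishes for every $s$. The only difference is organizational — you close the cycle via $(iii)\Rightarrow(i)$ using just the linear polynomials $p=z_j$, while the paper proves $(iii)\Rightarrow(ii)$ for arbitrary $p$ and declares $(i)\Leftrightarrow(ii)$ obvious — which is a cosmetic, not a substantive, distinction.
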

\begin{proof}
The equivalence of $(i)$ and $(ii)$ is obvious as well as the fact that $(ii)$ implies $(iii)$. To see that $(iii)$ implies $(ii)$ note that for every $\xi \in W$ and every polynomial $p \in \C[z_1,\ldots,z_d]$ the following function is a continuous function in $s$:
\[
s \mapsto p(s \alpha_1 + \beta_1, \ldots, s \alpha_d + \beta_d) \xi.
\]
Composing with the projection onto $\cE/W$ we get a continuous function that is $0$ almost everywhere and thus is identically $0$.
\end{proof}

So assume $\fV$ is not weakly strict and write $\cW = \cap_{j=1}^d \ker \Phi^* \sigma_j$. Assume that there exists a vector $w \in \cW$, such that for every $j = 1,\ldots,d$ we have that $\alpha_j w \in \cW$ and $\beta_j w \subset W$. Fix some compact set  $K = [a,b] \subset \R_{>0}$ and set $\tilde{u} = \cF^{-1}(1_{K} w)$. Note that the values of $u$ are all in $\cW$ since $\cW$ is closed. Furthermore, we have that:
\[
\fu(t_1,\ldots,t_d) = \frac{1}{\sqrt{2 \pi}} \int_{-\infty}^{\infty} e^{i \sum_{j=1}^d t_j (s \alpha_j + \beta_j)} \widehat{u}(s) ds = \frac{1}{\sqrt{2 \pi}} \int_a^b e^{i \sum_{j=1}^d t_j (s \alpha_j + \beta_j)} w ds.
\]
Now the last expression belongs to $\cW$ by our assumption. Then if we construct the associated state $x$ for an initial condition $h$ we get that $x =0$ identically. Therefore, the triple $(u|_{\R_{<0}},0,u|_{\R_{>0}})$ is a non-zero vector orthogonal to $\cL$, the space defined in the Lemma above. 

\begin{prop} \label{prop:minimal_then_no_submodule}
If there exists a closed subspace $0 \neq \cM \subset \cW$, invariant under $\alpha_j$ and $\beta_j$ for every $j=1,\ldots,d$, then the construction yields a non-minimal dilation. In other words if we have a minimal dilation then there is no such subspace of $\cW$.
\end{prop}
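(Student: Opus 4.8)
The plan is to produce a nonzero vector $v \in \cL^{\perp}$, where $\cL = \overline{\Span\{\rho(t)\cH \mid t \in \R^d\}}$. Because $\rho$ is a unitary representation (Lemma \ref{lem:rep_is_isometry}), orthogonality $v \perp \cL$ is equivalent to $P_{\cH}\rho(t)v = 0$ for every $t \in \R^d$, so it suffices to build a nonzero triple whose state trajectory vanishes identically. Following the construction in the paragraph preceding the statement, I would fix $0 \neq w \in \cM$ and a compact interval $K = [a,b] \subset \R_{>0}$, set $\tilde{u} = \cF^{-1}(1_K w)$, and take $v = (\tilde{u}|_{\R_{<0}}, 0, \tilde{u}|_{\R_{>0}})$ with initial condition $h = 0$. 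Here $\tilde{u}$ is smooth with all derivatives in $\Ltwoe$ (its Fourier transform has compact support), $\tilde{u} \neq 0$, and its values lie in $\Span\{w\} \subset \cM$. Since we have normalized $\sigma_1 = I_{\cE}$, we have $\cW \subset \ker \Phi^{*}$, so $\Phi^{*}\tilde{u} \equiv 0$; hence the one-dimensional state \eqref{eq:explicit_x} attached to $v$ vanishes on the $t_1$-axis, and the extensions of Lemma \ref{lem:one_dim_ext} reproduce $\tilde{u}$ on all of $\R$.

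The crux is to show that the full $d$-dimensional input solution $u^{\dagger} = \fu_{\tilde{u}}$ keeps its values in $\cW$. By the Fourier representation underlying Theorem \ref{thm:schwartz_summary},
\[
u^{\dagger}(t) = \frac{1}{\sqrt{2\pi}} \int_a^b e^{i \sum_{j=1}^d t_j (s\alpha_j + \beta_j)} w \, ds,
\]
so it suffices that each operator $e^{i \sum_{j} t_j (s\alpha_j + \beta_j)}$ preserve $\cM$. The $VR$ commutation conditions \eqref{eq:commutation_conditions} give $[s\alpha_j + \beta_j, s\alpha_k + \beta_k] = 0$ for every real $s$, so Lemma \ref{lem:invariant_subspace} applies and $\cM$ is invariant under every polynomial in $s\alpha_1 + \beta_1, \ldots, s\alpha_d + \beta_d$. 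As the $\alpha_j$ and $\beta_j$ are bounded, the exponential is a norm-limit of such polynomials, and since $\cM$ is closed it is preserved; approximating the integral by Riemann sums then places $u^{\dagger}(t) \in \cM \subset \cW$ for every $t \in \R^d$. This transfer of invariance from the generators $\alpha_j,\beta_j$ to the exponential flow is the main point, and it is precisely where the $VR$ hypothesis enters.

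With $u^{\dagger}(t) \in \cW = \cap_{j=1}^d \ker \Phi^{*}\sigma_j$, we get $\Phi^{*}\sigma(\xi) u^{\dagger} = \sum_{j} \xi_j \Phi^{*}\sigma_j u^{\dagger} = 0$ along every line $\xi s + \eta$ with $\xi \in \Pos(\fV)$. Substituting this into the state formula \eqref{eq:general_x_on_lines} of Lemma \ref{lem:weak_trajectory} with $h = 0$, the integral term vanishes and $x(\xi s + \eta) = 0$; since $\Pos(\fV)$ spans $\R^d$ this forces $x(t) = P_{\cH}\rho(t)v = 0$ for all $t \in \R^d$. Finally, for every $t \in \R^d$ and $h \in \cH$, unitarity yields $\langle v, \rho(t)(0,h,0)\rangle = \langle P_{\cH}\rho(-t)v, h \rangle = 0$, so $v \perp \cL$; as $v \neq 0$, the space $\cL$ is a proper subspace of $\cK$ and the dilation is not minimal. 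The only remaining routine verifications are the regularity of $\tilde{u}$ needed to invoke Theorem \ref{thm:schwartz_summary} and Lemma \ref{lem:weak_trajectory}, and the fact that the one-dimensional extensions of $u$ and $y$ indeed return $\tilde{u}$.
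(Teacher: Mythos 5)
Your proof is correct and takes essentially the same route as the paper: the same witness $\tilde{u} = \cF^{-1}(1_{[a,b]}w)$ with $h = 0$, invariance of $\cM$ under the exponentials $e^{i\sum_j t_j(s\alpha_j+\beta_j)}$ via Lemma \ref{lem:invariant_subspace}, and vanishing of the resulting state to produce a nonzero vector orthogonal to $\cL$. You merely make explicit some steps the paper leaves implicit (polynomial approximation of the exponential, Riemann sums keeping the integral in the closed subspace $\cM$, and the unitarity argument reducing $v \perp \cL$ to $P_{\cH}\rho(t)v = 0$).
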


\bibliographystyle{plain}
\bibliography{Livsic_Commutative_Vessels_of_3_Operators}{}
\end{document}